\newtheorem{theorem}{Theorem}[section]
\newtheorem{lemma}[theorem]{Lemma}
\newtheorem{question}[theorem]{Question}
\newtheorem{corollary}[theorem]{Corollary}
\theoremstyle{definition}
\newtheorem{definition}[theorem]{Definition}
\theoremstyle{remark}
\numberwithin{equation}{section}
\newcommand{\uhr}{\upharpoonright}
\newcommand{\frakc}{\mathfrak{c}}
\newcommand{\frakp}{\mathfrak{p}}
\newcommand{\eps}{\varepsilon}
\newcommand{\R}{\mathbb{R}}
\newcommand{\E}{\mathbb{E}}
\newcommand{\aA}{\mathcal{A}}
\newcommand{\bB}{\mathcal{B}}
\newcommand{\cC}{\mathcal{C}}
\newcommand{\fF}{\mathcal{F}}
\newcommand{\jJ}{\mathcal{J}}
\newcommand{\concat}{{^\smallfrown}}
\DeclareMathOperator{\diam}{diam}
\DeclareMathOperator{\dom}{dom}
\DeclareMathOperator{\pr}{pr}
\DeclareMathOperator{\ran}{ran}
\newcommand{\rstr}{\restriction}
\newcommand{\sm}{\setminus}
\newcommand{\sub}{\subseteq}
\newcommand{\wh}{\widehat}
\newcommand{\Skin}[1]{\mathrm{Fin}\left(#1,2\right)}
\newcommand{\seq}[2]{\big\langle#1\colon\ #2\big\rangle}
\newcommand{\seqn}[1]{\big\langle#1\colon\ n\io\big\rangle}
\newcommand{\seqk}[1]{\big\langle#1\colon\ k\io\big\rangle}
\newcommand{\seql}[1]{\big\langle#1\colon\ l\io\big\rangle}
\newcommand{\seqi}[1]{\big\langle#1\colon\ i\io\big\rangle}
\newcommand{\seqnk}[1]{\big\langle#1\colon\ n,k\io\big\rangle}
\newcommand{\seqp}[1]{\big\langle#1\colon\ p\io\big\rangle}
\newcommand{\ctblsub}[1]{{\left[#1\right]^\omega}}
\newcommand{\finsub}[1]{{\left[#1\right]^{<\omega}}}
\newcommand{\iA}{\in\aA}
\newcommand{\w}{\omega}
\newcommand{\io}{\in\omega}
\newcommand{\bo}{{\beta\omega}}
\newcommand{\oo}{\omega^\omega}
\newcommand{\cso}{\ctblsub{\omega}}
\newcommand{\ioo}{\in\oo}
\newcommand{\Cantor}{2^\omega}
\begin{document}

\title[Minimally generated Boolean algebras]{Minimally generated Boolean algebras and the~Nikodym property}
\author[D.\ Sobota]{Damian Sobota}
\author[L. Zdomskyy]{Lyubomyr Zdomskyy}
\dedicatory{Dedicated to the memory of Kenneth Kunen (1943--2020)}

\address{Universit\"at Wien, Institut f\"ur Mathematik,
Kurt G\"odel Research Center, Kolingasse 14-16, 1090 Wien, Austria.}
\email{damian.sobota@univie.ac.at}
\urladdr{www.logic.univie.ac.at/~{}dsobota}
 \email{lzdomsky@gmail.com}
\urladdr{www.logic.univie.ac.at/~{}lzdomsky}
\thanks{The first author
was supported by the Austrian Science Fund FWF, Grants M 2500 and I
4570. The second author acknowledges the support by the same agency
through the Grants I 2374 and I 3709.}

\begin{abstract}
A Boolean algebra $\aA$ has \textit{the
Nikodym property} if every pointwise bounded sequence of bounded
finitely additive measures on $\aA$ is uniformly bounded.  Assuming
the Diamond Principle $\Diamond$, we will construct an example of a
minimally generated Boolean algebra $\aA$ with the Nikodym property.
The Stone space of such an algebra must necessarily be an Efimov
space. The converse is, however, not true---again under $\Diamond$
 we will provide an example of a minimally generated
Boolean algebra whose Stone space is Efimov but which does not
have the Nikodym property. The results have interesting
measure-theoretic and topological consequences.
\end{abstract}

\subjclass[2010]{Primary: 06E15, 28A33, 03E75. Secondary: 28E15, 03E35.}
\keywords{minimally generated Boolean algebras, Nikodym property, Efimov problem,  Grothendieck property, convergence of measures, inverse systems, Diamond principle}

\maketitle

\section{Introduction}

The celebrated \textit{Efimov problem} is a long-standing open question asking whether every infinite compact space contains either a non-trivial convergent sequence, or a copy of $\bo$, the Stone--\v{C}ech compactification of natural numbers. Although a full answer is not known, many consistent counterexamples to the problem, called \textit{Efimov spaces}, have been found, e.g. by Fedorchuk \cite{Fed75,Fed76,Fed77} or by Dow and his co-authors \cite{Dow05,DF07,DPM09,DS13}. Note here that a compact space $K$ contains a copy of $\bo$ if and only if it can be continuously mapped onto the product $[0,1]^\frakc$. Moreover, if $K$ can be continuously mapped onto the cube $[0,1]^\kappa$ for some cardinal number $\kappa\ge\omega$, then $K$ must necessarily carry a (Radon) probability measure of Maharam type $\kappa$ (see Definition \ref{def:maharam_type}). The converse is however not true---assuming the Diamond Principle $\Diamond$, D\v{z}amonja and Kunen \cite{DK93} constructed an Efimov space $K$ carrying a measure of Maharam type $\omega_1$. It follows that the question asking whether every infinite compact space either contains a non-trivial convergent sequence, or it carries a probability measure of uncountable Maharam type, is a strict weakening of the Efimov problem. However, it still does not hold true in ZFC, as D\v{z}amonja and Plebanek \cite{DP07} showed constructing an appropriate consistent counterexample---under the assumption of the Continuum Hypothesis they obtained an Efimov space carrying only probability measures of countable Maharam type.

In the same paper \cite{DP07}, D\v{z}amonja and Plebanek went even further and proved that every non-atomic probability measure on the limit of an inverse system of length at most $\omega_1$ and based on simple extensions of totally disconnected compact spaces is uniformly regular. The notion of the uniform regularity (Definition \ref{def:unif_reg_meas}) was introduced by Babiker \cite{Bab77} and trivially implies countable Maharam type. Inverse systems based on simple extensions are, intuitively speaking, such systems in which each successor space is obtained from its predecessor by splitting exactly one point (cf. Definition \ref{def:inv_sys_simp_ext}). Since the Efimov spaces obtained under $\Diamond$ in \cite{Fed76} or \cite{DK93} are limits of such inverse systems, it follows that an even stronger weakening of the Efimov problem, asking whether every compact space either contains a non-trivial convergent sequence or it carries a measure which is not uniformly regular, has consistently a negative answer.

Inverse systems based on simple extensions of compact spaces have a dual meaning in terms of so-called \textit{minimally generated} Boolean algebras, the notion introduced and studied by Koppelberg \cite{Kop88,Kop89}. Intuitively speaking, a minimally generated Boolean algebra is such an algebra which can be represented as an increasing chain of its proper subalgebras in such a way that there is no proper subalgebra between any two consecutive subalgebras in the chain (cf. Definition \ref{def:min_gen_ba}). Measure-theoretic properties of minimally generated Boolean algebras were investigated by Borodulin-Nadzieja \cite{PBN07}, who, i.a., showed that they can carry only measures of countable Maharam type. Forcing aspects of minimally generated Boolean algebras were deeply studied by Koszmider \cite{Kos99}.

In this paper we study further measure-theoretic properties of minimally generated Boolean algebras (or, equivalently, limits of inverse systems based on simple extensions). More precisely, assuming the Diamond Principle $\Diamond$ we prove in Theorems \ref{thm:min_gen_nik} and \ref{thm:min_gen_no_nik} the existence of two minimally generated Boolean algebras $\aA$ and $\bB$ such that $\aA$ has the Nikodym property whereas $\bB$ does not but its Stone space $St(\bB)$ is an Efimov space. The Nikodym property of Boolean algebras is defined as follows.

\begin{definition}\label{def:nikodym}
A Boolean algebra $\aA$ has \textit{the Nikodym property} if every sequence $\seqn{\mu_n}$ of finitely additive signed measures on $\aA$ which is pointwise bounded, i.e. $\sup_{n\io}\big|\mu_n(A)\big|<\infty$ for every $A\iA$, is uniformly bounded, i.e. $\sup_{n\io}\big\|\mu_n\big\|<\infty$.
\end{definition}

The property may look at first sight a bit strange, but by the virtue of the Riesz representation theorem for the duals of Banach $C(K)$-spaces it is closely related to the well-known Uniform Boundedness Principle (or, the Banach--Steinhaus theorem) of Banach spaces; it may also be expressed in terms of convergence of measures on totally disconnected compact spaces, see \cite[Prop. 2.4]{SZ19}. The class of Boolean algebras with the Nikodym property contains all $\sigma$-complete Boolean algebras (Nikodym \cite{Nik33}, Darst \cite{Dar67}; cf. also \cite{Sch82}, \cite{Hay81}, or \cite{Fre84_vhs}), but does not include those Boolean algebras whose Stone spaces contain non-trivial convergent sequences. The latter observation, together with the result of Koppelberg \cite{Kop89} and Borodulin-Nadzieja \cite{PBN07} stating that the Stone spaces of minimally generated Boolean algebras never contain copies of $\bo$, implies immediately that the Stone space $St(\aA)$ of the Boolean algebra $\aA$ constructed in Theorem \ref{thm:min_gen_nik} is an Efimov space. Since on the other hand the Boolean algebra of clopen subsets of the Efimov space obtained in Theorem \ref{thm:min_gen_no_nik} does not have the Nikodym property, one can say, in the spirit of the aforementioned weakenings of the Efimov problem, that the question whether every totally disconnected compact space either contains a non-trivial convergent sequence, or its Boolean algebra of clopen subsets has the Nikodym property, has a consistent negative answer\footnote{Let us only mention that both in the random model and the Laver model there exist totally disconnected Efimov spaces $K$ having an even stronger property: for every closed subset $L$ of $K$ the Boolean algebra of clopen subsets of $L$ does not have the Nikodym property.}.

The (consistent) existence of a minimally generated Boolean algebra with the Nikodym property, proved in Theorem \ref{thm:min_gen_nik}, has important consequences, since it helps to understand differences between the Nikodym property and the closely-related Grothendieck property, defined in the following way.

\begin{definition}\label{def:grothendieck}
A Boolean algebra $\aA$ has \textit{the Grothendieck property} if every sequence $\seqn{\mu_n}$ of signed Radon measures on the Stone space $St(\aA)$ which is weakly* convergent to $0$, i.e. $\lim_{n\to\infty}\int_{St(\aA)}fd\mu_n=0$ for every $f\in C(St(\aA))$, is weakly convergent to $0$, i.e. $\lim_{n\to\infty}\mu_n(B)=0$ for every Borel subset $B\sub St(\aA)$.
\end{definition}

The Grothendieck property is an important property, originating actually from the general Banach space theory, see Diestel \cite{Die73}. Similarly as in the case of the Nikodym property, the class of Boolean algebras with the Grothendieck property contains all $\sigma$-complete Boolean algebras (Grothendieck \cite{Gro53}; cf. again \cite{Sch82}, \cite{Hay81}, or \cite{Fre84_vhs}) and does not contain any Boolean algebra whose Stone space has a non-trivial convergent sequence. It does not contain also any minimally generated Boolean algebra (\cite[Cor. 9.11]{KSZ20}), which in the light of Theorem \ref{thm:min_gen_nik} shows that the both properties are essentially different. This is crucial, for it has always been a notorious problem to find any example of a Boolean algebra with only one of the named properties. The Boolean algebra $\jJ$ of all Jordan-measurable subsets of the unit interval $[0,1]$ is known to have the Nikodym property but not the Grothendieck property (Schachermayer \cite[Prop. 3.2--3.3]{Sch82}; cf. \cite{GW83}), but no reverse ZFC example has been found---the only known Boolean algebra with the Grothendieck property but without the Nikodym property was constructed by Talagrand \cite{Tal84} under the assumption of the Continuum Hypothesis. Let us note here that the Jordan algebra $\jJ$ is not minimally generated, as its Stone space contains many copies of $\bo$ (\cite[Prop 3.11]{Sch82}), thus our example from Theorem \ref{thm:min_gen_nik} seems to differ vitally from the previously known examples of Boolean algebras with the Nikodym property but without the Grothendieck property, studied e.g. in \cite{GW83}. Moreover, in Section \ref{sec:consequences} we list several measure-theoretic properties of minimally generated Boolean algebras which are never possessed by Boolean algebras with the Grothendieck property (such as carrying only measures of countable Maharam type)---it yields consequently that both of the properties may very much differ from the point of view of topological measure theory.

%

\section{Preliminaries}

Our notation is mostly standard and follows Kunen \cite{Kun80} (set theory), Bogachev \cite{Bog07} (measure theory) and Givant and Halmos \cite{GH09} (Boolean algebras).

By $\E$ we denote the set of even natural numbers, i.e. $\E=\{2n\colon n\io\}$.

If $I$ is a set, then by $\mathrm{Fin}(I,2)$ we denote the set of all finite $0-1$ sequences with domains in $I$, and by $\finsub{I}$ the family of all finite subsets of $I$. If $\alpha\le\beta$ are ordinals and $\sigma\in\Skin{\alpha}$, then $[\sigma]_\beta$ denotes the (clopen) subset of $2^\beta$ consisting of all $x$ such that $x(i)=\sigma(i)$ for every $i\in\dom(\sigma)$. Similarly, if $F\in\finsub{\Skin{\alpha}}$, then $[F]_\alpha=\bigcup_{\sigma\in F}[\sigma]_\alpha$. If $\alpha\le\beta$, then $\pr_\alpha^\beta\colon2^\beta\to 2^\alpha$ is the natural projection, i.e. $\pr_\alpha^\beta(x)=x\rstr\alpha$ for every $x\in 2^\beta$.

We make the assumption that all compact spaces considered in this paper are \textbf{Hausdorff}. A convergent sequence $\seqn{x_n}$ in a given compact space is \textit{non-trivial} if $x_m\neq\lim_{n\to\infty}x_n$ for every $m\io$ and $x_n\neq x_m$ for every $n\neq m\io$.

If $\aA$ is a Boolean algebra, then by $St(\aA)$ we denote its Stone space, i.e. the totally disconnected compact space of all ultrafilters on $\aA$. If $K$ is a totally disconnected compact space, then $Clopen(K)$ denotes its Boolean algebra of clopen subsets. Recall that $Clopen(St(\aA))$ is isomorphic to $\aA$ for every Boolean algebra $\aA$.

For a Boolean algebra $\aA$ we will denote its zero and unit elements by $0_\aA$ and $1_\aA$, respectively, and its meet and join operations by $\wedge$ and $\vee$. If $A\in\aA$, then its corresponding clopen subset of the Stone space $St(\aA)$ will be also denoted by $A$---this should not introduce any confusion.

All \textit{measures} considered on Boolean algebras are assumed to be finitely additive, signed and finite (bounded), i.e. if $\mu$ is a measure on a Boolean algebra $\aA$, then $\|\mu\|=\sup\big\{|\mu(A)|+|\mu(B)|\colon\ A,B\in\aA,\ A\wedge B=0_\aA\big\}<\infty$. If on the other hand $\mu$ is a measure on a compact space $K$, then it is assumed to be Radon, that is, $\mu$ is $\sigma$-additive, Borel, regular and finite (bounded), i.e. $\|\mu\|=\sup\big\{|\mu(A)|+|\mu(B)|\colon\ A,B\in Bor(K),\ A\cap B=\emptyset\big\}<\infty$. Recall that each measure $\mu$ on a Boolean algebra $\aA$ extends uniquely to a Radon measure on $St(\aA)$---we will denote this extension by $\wh{\mu}$. However for a clopen $V\sub St(\aA)$ and a measure $\mu$ on $\aA$ we will not distinguish between $\mu(A)$ and $\wh{\mu}(A)$ and usually write simply $\mu(A)$.

\subsection{Minimally generated Boolean algebras}

Minimally generated Boolean algebras are in a sense the simplest possible infinite Boolean algebras.

\begin{definition}\label{def:min_gen_ba}
Let $\bB$ be a Boolean algebra and $\aA$ its subalgebra. We say that $\bB$ is \textit{a minimal extension} of $\aA$ if there is no subalgebra $\cC$ of $\bB$ such that $\aA\subsetneq\cC\subsetneq\bB$.

We say that $\aA$ is \textit{minimally generated} if there is an increasing sequence $\seq{\aA_\alpha}{\alpha\le\delta}$ of subalgebras of $\aA$ such that
\begin{itemize}
    \item $\aA=\aA_\beta$,
    \item $\aA_0$ is isomorphic to the free countable Boolean algebra $Fr(\omega)$,
    \item $\aA_\gamma=\bigcup_{\alpha<\gamma}\aA_\alpha$ for every limit $\gamma\le\delta$, and
    \item $\aA_{\alpha+1}$ is a minimal extension of $\aA_\alpha$ for every $\alpha<\beta$.
\end{itemize}
\end{definition}

(Note that some authors, e.g. \cite{PBN07}, require that $\aA_0=\{0,1\}$.) For basic information concerning minimally generated Boolean algebras we refer the reader e.g. to \cite{Kop88}, \cite{Kop89}, and \cite{PBN07}. The notion of minimally generated Boolean algebras is dual to the following topological one (cf. \cite[Lemma 3.3]{PBN07}).

\begin{definition}\label{def:inv_sys_simp_ext}
An inverse system $\seq{K_\alpha,\pi^\beta_\alpha}{0\le\alpha<\beta\le\delta}$ of totally disconnected compact spaces is \textit{based on simple extensions} if
\begin{itemize}
    \item it is \textit{continuous}, i.e. for every limit ordinal $\gamma\le\delta$ the space $K_\gamma$ is the limit of the partial inverse system $\seq{K_\alpha,\pi^\beta_\alpha}{0\le\alpha<\beta\le\gamma}$,
    \item $K_0=2^\omega$,
    \item for every $\alpha<\delta$ the space $K_{\alpha+1}$ is \textit{a simple extension} of $K_\alpha$, i.e. there is $x_\alpha\in K_\alpha$ such that $\big|\big(\pi_\alpha^{\alpha+1}\big)^{-1}\big(x_\alpha\big)\big|=2$ and for every $y\in K_\alpha\sm\big\{x_\alpha\big\}$ it holds $\big|\big(\pi_\alpha^{\alpha+1}\big)^{-1}(y)\big|=1$.
\end{itemize}
\end{definition}

For general information concerning inverse systems of compact spaces, see \cite[Sections 2.5 and 3.2]{Eng89}. 

\begin{lemma}\label{lem:simple_ext_empty_int}
Let $\seq{K_\alpha,\pi^\beta_\alpha}{0\le\alpha<\beta\le\delta}$ be an inverse system based on simple extensions such that $K_\alpha$ is a perfect closed subset of $2^\alpha$ for every $\alpha\le\delta$. Fix $\gamma<\delta$. If $x_\gamma$ is the only point in $K_\gamma$ such that $\big|\big(\pi^{\gamma+1}_{\gamma}\big)^{-1}\big(x_\gamma\big)\big|=2$, then
    $\big(\pi^{\delta}_{\gamma+1}\big)^{-1}\big(x_\gamma^0\big)$ has empty interior in $K_{\delta}$ for every $x_\gamma^0\in\big(\pi^{\gamma+1}_{\gamma}\big)^{-1}\big(x_\gamma\big)$. 
\end{lemma}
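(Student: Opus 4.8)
The plan is to argue by contradiction: suppose $U$ is a nonempty clopen subset of $K_\delta$ contained in $\big(\pi^\delta_{\gamma+1}\big)^{-1}\big(x_\gamma^0\big)$. Since $K_\delta$ is a closed subset of the product $2^\delta$, there is a finite partial function $\sigma$ with $\dom(\sigma)\in\finsub{\delta}$ such that $\emptyset\neq [\sigma]_\delta\cap K_\delta\sub U$; enlarging $\dom(\sigma)$ if necessary, we may assume $\gamma+1\in\dom(\sigma)$. Let $\eta<\delta$ be larger than all coordinates in $\dom(\sigma)$ (and larger than $\gamma+1$). Then $V:=[\sigma\rstr\dom(\sigma)]_\eta\cap K_\eta$ is a nonempty clopen subset of $K_\eta$, and $\big(\pi^\delta_\eta\big)^{-1}(V)\sub U\sub\big(\pi^\delta_{\gamma+1}\big)^{-1}\big(x_\gamma^0\big)$. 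Since $\big(\pi^\delta_\eta\big)^{-1}$ and $\pi^\delta_\eta$ respect the (clopen) decomposition along the $\gamma+1$ coordinate and each projection $\pi^\eta_{\gamma+1}$ for $\eta>\gamma+1$ is a bijection when restricted to the relevant fibres away from splitting points, it suffices to derive the contradiction one level at a time; so I will reduce to the key case and then handle the limit step.

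The key successor case is: $V$ is a nonempty clopen subset of $K_\eta$ with $V\sub\big(\pi^\eta_{\gamma+1}\big)^{-1}\big(x_\gamma^0\big)$. First note that $\pi^\eta_{\gamma+1}$ is injective on $V$: if $\eta=\gamma+1$ this is trivial, and otherwise the only point of $K_{\gamma+1}$ that could have a fibre of size $2$ under some $\pi^{\mu+1}_\mu$ with $\gamma+1\le\mu<\eta$ lies over $x_\gamma^0$ only if the splitting point $x_\mu$ projects to $x_\gamma^0$; but by applying the hypothesis inductively (the only splitting point in $K_\gamma$ is $x_\gamma$, and $x_\gamma^0$ is one of its two preimages, which is a single point in $K_{\gamma+1}$) together with a level-by-level bookkeeping argument, one sees that $V$, being confined to the fibre over $x_\gamma^0$, maps bijectively to $\{x_\gamma^0\}$'s fibre structure, in particular to a single point of $K_{\gamma+1}$. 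Hence $\pi^{\gamma+1}_\gamma\big(\pi^\eta_{\gamma+1}(V)\big)=\{x_\gamma\}$, so $\pi^\eta_\gamma(V)=\{x_\gamma\}$, i.e. $V$ is contained in a single fibre of $\pi^\eta_\gamma$. But $\pi^\eta_\gamma$ is a continuous surjection between compact spaces and $V$ is clopen and nonempty, so $\pi^\eta_\gamma(V)$ is open whenever... — more carefully: $K_\eta\sm V$ is clopen too, $\pi^\eta_\gamma(K_\eta\sm V)$ is closed, and $\pi^\eta_\gamma$ is surjective, so $\pi^\eta_\gamma(V)\supseteq K_\gamma\sm\pi^\eta_\gamma(K_\eta\sm V)$, which is a nonempty open subset of $K_\gamma$ (nonempty because $\pi^\eta_\gamma$ restricted to $V$ lands in $\{x_\gamma\}$ and $V\neq\emptyset$, so $x_\gamma\notin\pi^\eta_\gamma(K_\eta\sm V)$ would be needed — this uses that $\pi^\eta_\gamma$ is injective over a neighbourhood of $x_\gamma$ except at $x_\gamma$ itself). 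Thus $\{x_\gamma\}$ would contain a nonempty open subset of $K_\gamma$, contradicting that $K_\gamma$ is perfect (has no isolated points).

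The main obstacle is the bookkeeping in the intermediate argument: controlling, level by level between $\gamma+1$ and $\eta$, that the clopen set stays inside a single fibre and that no \emph{new} splitting point interferes. The cleanest way to organize this is an induction on $\eta$ with the statement: ``for every nonempty clopen $V\sub K_\eta$ with $\pi^\eta_\gamma(V)=\{x_\gamma\}$ we get a contradiction,'' where the successor step uses that $\pi^{\eta}_{\eta-1}$ is a homeomorphism off a single point and hence maps $V$ (which avoids, or is a singleton on, that point's fibre) to a set that still projects to $\{x_\gamma\}$ in $K_\gamma$ and still has nonempty interior in $K_{\eta-1}$; the limit step uses continuity of the inverse system, namely that a nonempty clopen subset of the limit $K_\eta$ comes from a nonempty clopen subset of some $K_{\eta'}$ with $\gamma<\eta'<\eta$, together with the fact that projections in a continuous inverse system of this kind are \emph{irreducible enough} that interiors are preserved. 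Once this induction is in place, the base case $\eta=\gamma+1$ is exactly the perfectness contradiction above, and the general statement of the lemma follows by taking $\eta$ to absorb the finitely many coordinates defining the witnessing basic clopen set and then projecting down to $K_{\gamma+1}$.
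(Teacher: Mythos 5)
Your reduction to a basic clopen set and the overall ``induct along the system'' plan could in principle be made to work, but the key successor case as you argue it is broken in two places. First, the injectivity claims are untenable: since $V\sub\big(\pi^{\eta}_{\gamma+1}\big)^{-1}\big(x_\gamma^0\big)$, the map $\pi^\eta_{\gamma+1}$ is \emph{constant} on $V$, so ``injective on $V$'' would force $V$ to be a singleton, impossible in a perfect space; and nothing in the hypotheses prevents later splitting points $x_\mu$ ($\gamma<\mu<\eta$) from lying inside the fibre over $x_\gamma^0$ --- that is exactly how this fibre can become infinite, and it is the only reason the lemma is not trivial, so an argument that ``no new splitting point interferes'' cannot succeed. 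Second, the perfectness contradiction you draw requires $x_\gamma\notin\pi^\eta_\gamma\big(K_\eta\sm V\big)$, i.e. that the \emph{whole} fibre $\big(\pi^\eta_\gamma\big)^{-1}\big(x_\gamma\big)$ is contained in $V$; but you only know $V$ is contained in the fibre, and in fact the preimages of the second point $x_\gamma^1$ of $\big(\pi^{\gamma+1}_\gamma\big)^{-1}\big(x_\gamma\big)$ always lie in $K_\eta\sm V$ and project to $x_\gamma$, so the open set $K_\gamma\sm\pi^\eta_\gamma\big(K_\eta\sm V\big)$ you produce is empty. The auxiliary claim that $\pi^\eta_\gamma$ is injective over a punctured neighbourhood of $x_\gamma$ is likewise false in general, again because later splittings may occur at points projecting near (or onto) $x_\gamma$.

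There is also an unproven assertion at the heart of your induction: in the successor step you say that $\pi^{\eta}_{\eta-1}$ maps $V$ to a set that ``still has nonempty interior in $K_{\eta-1}$.'' Simple-extension maps are not open maps (in Lemma \ref{lem:simple_ext_convergent_antichain} the new clopen set maps onto $\bigcup_{n\in A}V_n\cup\{t\}$, which is not open), so this needs a proof; it can be repaired --- remove from $V$ the at most two points of the doubled fibre, which leaves a nonempty open set by perfectness, and on the complement of that fibre the map is a homeomorphism onto an open subset --- but as written it is a gap, and it is precisely the point where the paper does something different. The paper's proof avoids any induction: it chooses a witnessing $\sigma\in\Skin{\delta}$ with $\eta=\max(\dom(\sigma))$ \emph{minimal}, notes that by this minimality $\big(\pi^{\eta}_{\gamma+1}\big)^{-1}\big(x_\gamma^0\big)$ is nowhere dense in $K_\eta$, and then gets a contradiction at the single step $\eta\to\eta+1$ from the fact that $\pi^{\eta+1}_\eta$ restricts to a homeomorphism between dense open subsets, so the preimage of a nowhere dense set cannot have nonempty interior. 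I recommend either adopting that argument or supplying the missing open-image lemma for simple extensions and discarding the injectivity/``no interference'' claims entirely.
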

\begin{proof}
    For the sake of contradiction assume that
    $\big(\pi^{\delta}_{\gamma+1}\big)^{-1}\big(x_\gamma^0\big)$ has non-empty
    interior in $K_{\delta}$ for some $x_\gamma^0\in\big(\pi^{\gamma+1}_{\gamma}\big)^{-1}\big(x_\gamma\big)$ and pick $\sigma\in\Skin{\delta}$ with the minimal
    possible $\max(\dom(\sigma))$ (which we will denote by $\eta$ in what
    follows) such that
    \[\emptyset\neq[\sigma]_{\delta}\cap K_{\delta}\sub\big(\pi^{\delta}_{\gamma+1}\big)^{-1}\big(x_\gamma^0\big),\]
    or equivalently
    \[\emptyset\neq[\sigma]_{\eta+1}\cap
    K_{\eta+1}\sub\big(\pi^{\eta+1}_{\gamma+1}\big)^{-1}\big(x_\gamma^0\big).\]
    Note that $\big(\pi^{\eta}_{\gamma+1}\big)^{-1}\big(x_\gamma^0\big)$ is
    nowhere dense in $K_\eta$, as otherwise there would exist
    $\tau\in\Skin{\eta}$ with
    \[\emptyset\neq[\tau]_\eta\cap K_\eta\sub
    \big(\pi^{\eta}_{\gamma+1}\big)^{-1}\big(x_\gamma^0\big),\] and hence
    \[\emptyset\neq [\tau]_{\delta}\cap K_{\delta}\sub
    \big(\pi^{\delta}_{\gamma+1}\big)^{-1}\big(x_\gamma^0\big),\] thus
    contradicting the minimality of $\eta$. 
    Let
    $x_\eta$ be the only element of $K_\eta$ with
    $\big|\big(\pi^{\eta+1}_{\eta}\big)^{-1}\big(x_\eta\big)\big|=2$. Note that the function
    \[\pi^{\eta+1}_{\eta}\rstr\Big( K_{\eta+1}\sm
    \big(\pi^{\eta+1}_{\eta}\big)^{-1}\big(x_\eta\big)\Big)\]
    is a homeomorphism between  open dense subsets $\Big( K_{\eta+1}\sm
    \big(\pi^{\eta+1}_{\eta}\big)^{-1}\big(x_\eta\big)\Big)$ of $K_{\eta+1}$ and
    $K_\eta\setminus\{x_\eta\}$ of
    $K_\eta$,
    respectively, thus making it impossible for any subset $Z\sub K_\eta$ that
    $\big(\pi^{\eta+1}_\eta\big)^{-1}[Z]$ has non-empty interior in $K_{\eta+1}$
    while $Z$ has empty interior in $K_\eta$. However, it follows from the
    above that  this is the case for
    $Z=\big(\pi^{\eta}_{\gamma+1}\big)^{-1}\big(x_\gamma^0\big)$, which leads to
    a contradiction.
%
\end{proof}

The following two lemmas show how to extend a given compact space (a Boolean algebra) in a simple (minimal) way. Lemma \ref{lem:simple_ext_convergent_antichain} will be used in the proof of Theorem \ref{thm:min_gen_nik}, whereas Lemma \ref{lem:min_ext_convergent_antichain} will be necessary for the proof of Theorem \ref{thm:min_gen_no_nik}. The proofs are folklore or easy (for the proof of the first part of Lemma \ref{lem:min_ext_convergent_antichain} see \cite[Lemma 3.14]{PBN07}).

\begin{lemma}\label{lem:simple_ext_convergent_antichain}
Let $K$ be a totally disconnected compact space and $\seqn{V_n}$ an antichain of clopens in $K$ converging to some point $t\in K$. Fix an infinite co-infinite subset $A\sub\omega$ and define the following closed subset of $K\times\{0,1\}$:
\[L=\Big(\bigcup_{n\in A}V_n\cup\{t\}\Big)\times\{0\}\ \cup\ \Big(K\sm\bigcup_{n\in A}V_n\Big)\times\{1\}.\]
Then, $L$, together with the natural projection, is a simple extension of $K$ and the set
\[\Big(\bigcup_{n\in A}V_n\cup\{t\}\Big)\times\{0\}\]
is a clopen subset of $L$.
\end{lemma}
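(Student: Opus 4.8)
The plan is to verify the two claims in order: first that $L$ with the natural projection $\pi\colon L\to K$ is a simple extension of $K$ (in the sense of Definition \ref{def:inv_sys_simp_ext}), and second that the designated ``half'' of $L$ is clopen. Throughout, let $W=\bigcup_{n\in A}V_n\cup\{t\}$ and $W'=\bigl(K\sm\bigcup_{n\in A}V_n\bigr)$, so that $L=(W\times\{0\})\cup(W'\times\{1\})\sub K\times\{0,1\}$, and let $\pi=\pr_K\rstr L$.

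First I would check that $L$ is closed in $K\times\{0,1\}$, hence compact (and Hausdorff, totally disconnected). Since $\seqn{V_n}$ is an antichain of clopens converging to $t$, the set $\bigcup_{n\in A}V_n\cup\{t\}$ is closed: indeed its complement $K\sm W$ equals $\bigcup_{n\notin A}V_n^{\,?}$ --- more carefully, any point $s\neq t$ has a clopen neighbourhood meeting only finitely many $V_n$ (by convergence to $t$), so if $s\notin W$ one can shrink that neighbourhood to avoid $W$ entirely; and $t\in W$. Thus $W$ is closed, $W'=K\sm\bigcup_{n\in A}V_n$ is closed (complement of an open set), and each of $W\times\{0\}$, $W'\times\{1\}$ is closed in $K\times\{0,1\}$, so $L$ is closed. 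Next, the fibre analysis: for $x\in K$, $\pi^{-1}(x)=\{(x,0)\}$ if $x\in W\sm W'$, $\{(x,1)\}$ if $x\in W'\sm W$, and $\{(x,0),(x,1)\}$ if $x\in W\cap W'$. Now $W\cap W'=\bigl(\bigcup_{n\in A}V_n\cup\{t\}\bigr)\cap\bigl(K\sm\bigcup_{n\in A}V_n\bigr)=\{t\}$, because the $V_n$ are pairwise disjoint clopens and $t\notin V_n$ for any $n$ (as $\seqn{V_n}$ is a \emph{non-trivial} convergent antichain --- one should note $t\notin V_n$; if the statement allows $t\in V_{n_0}$ for one index, that index can simply be removed from $A$, or one argues $t$ is not in any $V_n$ since the $V_n$ are an antichain converging to but not containing their limit). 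Hence exactly one point, namely $x_K:=t$, has a two-point fibre and all others have singleton fibres, which is precisely the condition defining a simple extension once we know $\pi$ is continuous and surjective. Surjectivity is clear since $W\cup W'=K$; continuity is immediate as $\pi$ is the restriction of the coordinate projection.

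For the second claim, $W\times\{0\}=L\cap(K\times\{0\})$, and $K\times\{0\}$ is clopen in $K\times\{0,1\}$, so $W\times\{0\}$ is clopen in $L$. That is the whole argument for that part. I would also remark, to match the intended use, that its relative complement $L\sm(W\times\{0\})=\bigl(\bigcup_{n\in A}V_n\bigr)\times\{0\}\;\cup\;W'\times\{1\}$ --- wait, more simply $L\sm(W\times\{0\})=W'\times\{1\}$ restricted appropriately; in any case both pieces are clopen, confirming $L$ is totally disconnected and that the projection realizes the minimal (dual: simple) extension structure.

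I do not expect a serious obstacle here; the lemma is explicitly flagged as folklore. The only point requiring a little care is the topological bookkeeping around the limit point $t$: verifying that $W=\bigcup_{n\in A}V_n\cup\{t\}$ is genuinely closed (this is where the convergence $V_n\to t$ and the antichain/non-triviality hypotheses are used, ensuring that no \emph{other} point of $K$ is a limit of the $V_n$ with $n\in A$), and confirming $W\cap W'=\{t\}$ exactly, so that $t$ is the \emph{unique} doubled point. Everything else --- compactness, total disconnectedness, surjectivity and continuity of $\pi$, and clopenness of $W\times\{0\}$ --- follows formally from $K\times\{0,1\}$ being a totally disconnected compact space and $L$ a closed subspace cut out by the clopen coordinate $K\times\{0\}$ together with the closed sets $W,W'$.
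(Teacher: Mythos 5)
Your argument is correct and is exactly the direct verification the paper intends: the lemma is stated there without proof (labelled folklore/easy), and your checks---closedness of $\bigcup_{n\in A}V_n\cup\{t\}$ via convergence together with the antichain property forcing $t\notin V_n$ for all $n$, the fibre analysis showing $t$ is the unique point with a two-element preimage, and clopenness of the set as $L\cap(K\times\{0\})$---are precisely the routine details being omitted. The only blemish is the throwaway remark at the end: the relative complement of the clopen piece is exactly $\bigl(K\sm\bigcup_{n\in A}V_n\bigr)\times\{1\}$ (no further restriction needed), but this does not affect the proof.
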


\begin{lemma}\label{lem:min_ext_convergent_antichain}
Let $\aA$ be a Boolean algebra and $\seqn{V_n}$ an antichain in $\aA$. Assume that in $St(\aA)$ the clopen sets $V_n$'s converge to some point (ultrafilter) $t\in St(\aA)$. Fix an infinite co-infinite subset $A\sub\omega$. Let $\cC$ be a Boolean algebra containing $\aA$ and such that the supremum $V=\bigvee_{n\in A}V_n$ exists in $\cC$. Then, the following two statements hold.
\begin{enumerate}
    \item The Boolean algebra $\bB$ generated by $\aA$ and $V$ (in $\cC$) is a minimal extension of $\aA$ and the ultrafilter $t$ is the only ultrafilter in $\aA$ that extends to two different ultrafilters in $\bB$;
    \item Let $K$ be a compact space and $\varphi\colon K\to St(\aA)$ a homeomorphism. Put:
    \[L=\Big(\bigcup_{n\in A}\varphi^{-1}\big[V_n\big]\cup\big\{\varphi^{-1}(t)\big\}\Big)\times\{0\}\ \cup\ \Big(K\sm\bigcup_{n\in A}\varphi^{-1}\big[V_n\big]\Big)\times\{1\}\]
    and define the mapping $\varphi'\colon L\to St(\bB)$ as follows: $\varphi'(x,y)$ is the ultrafilter $\varphi(x)$ extended to the algebra $\bB$ by the element $V$ if $y=0$ and by the element $V^c$ if $y=1$. Then, $\varphi'$ is a homeomorphism and satisfies the equality $\rho\circ\varphi'=\varphi\circ\pi$, where $\rho$ is the restriction of ultrafilters in $\bB$ to the subalgebra $\aA$, i.e. $\rho(x)=x\cap\aA$ for every $x\in St(\bB)$, and $\pi$ is the natural projection from $K\times\{0,1\}$ onto $K$. It also holds:
    \[(\varphi')^{-1}[V]=\Big(\bigcup_{n\in A}\varphi^{-1}\big[V_n\big]\cup\big\{\varphi^{-1}(t)\big\}\Big)\times\{0\}.\]
\end{enumerate}
\end{lemma}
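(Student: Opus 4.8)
The plan is to verify the two statements essentially by transporting the concrete topological construction in Lemma \ref{lem:simple_ext_convergent_antichain} through the Stone duality. For part (1), I would first recall that since $\seqn{V_n}$ is an antichain converging to $t$, the supremum $V=\bigvee_{n\in A}V_n$, if it exists in some overalgebra $\cC$, is characterized purely in terms of $St(\aA)$: its corresponding clopen set in $St(\bB)$ must, after restriction to $\aA$, contain all the $V_n$ for $n\in A$, be disjoint from all $V_n$ for $n\notin A$ (here co-infiniteness of $A$ and the antichain condition are used), and the only point whose membership is undetermined is $t$, because $\bigcup_{n\in A}V_n\cup\{t\}$ and its complement together with $\{t\}$ are the only two closed sets whose traces on each basic clopen agree with the $V_n$'s. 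To see $\bB$ is a \emph{minimal} extension of $\aA$, I would show that any subalgebra $\dD$ with $\aA\subsetneq\dD\subseteq\bB$ already contains $V$: every element of $\bB$ has the form $(A_1\wedge V)\vee(A_2\wedge V^c)$ for $A_1,A_2\in\aA$, so if $\dD$ contains such an element not in $\aA$ then $A_1\wedge V\neq A_2\wedge V$ modulo $\aA$; massaging this, together with the fact that $V$ separates exactly the pair of ultrafilters over $t$, forces $V\in\dD$. The cleanest route is the dual one: $St(\bB)\to St(\aA)$ is a continuous surjection, and an extension being minimal is equivalent to this map having exactly one non-trivial (two-point) fiber — and from the description of $V$ above, the fiber over an ultrafilter $u\in St(\aA)$ has two points precisely when both $V$ and $V^c$ are compatible with $u$, which happens exactly for $u=t$.

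For part (2), I would define $L$ and $\varphi'$ as in the statement and check four things in turn: (i) $\varphi'$ is well defined, i.e. for $(x,y)\in L$ the prescribed extension of the ultrafilter $\varphi(x)$ by $V$ (if $y=0$) or by $V^c$ (if $y=1$) is indeed an ultrafilter on $\bB$ — this is where the partition structure of $L$ matters: if $\varphi(x)\ni V_n$ for some $n\in A$ then $\varphi(x)$ is forced to contain $V$, so $y$ must be $0$, and symmetrically; the only ambiguous case is $\varphi(x)=t$, which is the point split into two; (ii) $\varphi'$ is a bijection, which follows since every ultrafilter on $\bB$ restricts to one on $\aA$ and is then determined by whether it contains $V$ or $V^c$; (iii) $\varphi'$ is continuous, hence (being a continuous bijection between compact Hausdorff spaces) a homeomorphism — for continuity it suffices to pull back the subbasic clopens $V$ and the clopens of $\aA$, and $(\varphi')^{-1}[V]$ is exactly the displayed set $\big(\bigcup_{n\in A}\varphi^{-1}[V_n]\cup\{\varphi^{-1}(t)\}\big)\times\{0\}$, which is clopen in $L$ by Lemma \ref{lem:simple_ext_convergent_antichain} (after identifying $K$ with $St(\aA)$ via $\varphi$), while $(\varphi')^{-1}[A]=\pi^{-1}[\varphi^{-1}[A]]$ for $A\in\aA$; (iv) the intertwining identity $\rho\circ\varphi'=\varphi\circ\pi$ holds by construction, since restricting $\varphi'(x,y)$ back to $\aA$ simply undoes the extension and returns $\varphi(x)=\varphi(\pi(x,y))$.

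The main obstacle, and the place where a little care is genuinely needed, is item (i) together with the verification that $V$ has the claimed form — equivalently, that the clopen set of $St(\bB)$ corresponding to $V$ meets the fiber over each ultrafilter of $\aA$ in the "expected" way. The subtlety is that $V$ is defined as a supremum in an \emph{arbitrary} containing algebra $\cC$, not constructed by hand, so one must argue that any realization of $\bigvee_{n\in A}V_n$ is forced to behave on $St(\aA)$ exactly like the closure $\bigcup_{n\in A}V_n\cup\{t\}$. This is precisely where convergence of the $V_n$ to $t$ is indispensable: it guarantees that $\bigcup_{n\in A}V_n$ accumulates only at $t$, so the "smallest" clopen-over-$\aA$ upper bound of $\{V_n:n\in A\}$ in $St(\bB)$ can deviate from $\bigcup_{n\in A}V_n$ only by adding points from $\{t\}$; and since we also need $V$ an \emph{element} of a Boolean algebra, both $t$-ultrafilter and its partner must be split. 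Once this structural fact is pinned down, everything else is bookkeeping with ultrafilters, and part (1) for the first statement may simply be cited from \cite[Lemma 3.14]{PBN07} as the excerpt indicates.
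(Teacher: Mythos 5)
The paper does not in fact write out a proof of this lemma: it declares the proofs folklore or easy and cites \cite[Lemma 3.14]{PBN07} for part (1). Your sketch is precisely the standard argument the paper has in mind --- analyze the fibers of the restriction map $St(\bB)\to St(\aA)$, use the dual characterization of minimality (any intermediate algebra either separates the two ultrafilters over $t$ or not, and in either case collapses to $\bB$ or to $\aA$), and then verify directly that $\varphi'$ is a well-defined continuous bijection between compact Hausdorff spaces, hence a homeomorphism --- and it is correct in outline. One point should be placed correctly: the disjointness $V\wedge V_m=0$ for $m\notin A$ does not use co-infiniteness of $A$ at all; it follows from the antichain property alone, since $V_m^c\in\aA\sub\cC$ is an upper bound of $\{V_n\colon n\in A\}$ and hence $V\le V_m^c$. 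Co-infiniteness (together with convergence) is instead what guarantees that $t$ genuinely splits, i.e.\ that the extension of $t$ by $V^c$ is proper: if some $a\in t$ satisfied $a\le V$, then by convergence all but finitely many $V_m$ with $m\notin A$ would satisfy $V_m\le a\le V$, while each such $V_m$ is disjoint from $V$, forcing $V_m=0$, a contradiction. With that detail in its proper place, the remaining verifications you list (well-definedness of $\varphi'$, bijectivity, continuity via preimages of $\hat{a}$ for $a\in\aA$ and of $V$, and the intertwining identity) are indeed routine.
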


Note that Lemma \ref{lem:simple_ext_convergent_antichain} implies that the space $L$ in Lemma \ref{lem:min_ext_convergent_antichain}.(2) is a simple extension of the space $K$.

\subsection{Anti-Nikodym sequences of measure}

We start with the following basic definition concerning the failure of the Nikodym property.

\begin{definition}
A sequence $\seqn{\mu_n}$ of measures on a Boolean algebra $\aA$ is \textit{anti-Nikodym} if it is pointwise bounded but not uniformly bounded.
\end{definition}

Obviously, a Boolean algebra $\aA$ has the Nikodym property if and only if there are no anti-Nikodym sequences on $\aA$; for more information, see \cite[Section 4]{Sob19}. The following lemma shows a crucial property of anti-Nikodym sequences.

\begin{lemma}\label{lem:aN_antichain}
Let $K$ be a totally disconnected compact space and $\seqn{\mu_n}$ an anti-Nikodym sequence of measures on $Clopen(K)$. Then, there is an antichain $\seqk{A_k}$ of clopen subsets of $K$ and a sequence $\seqk{n_k\io}$ such that for every $k\io$ we have $\big|\mu_{n_k}\big(A_k\big)\big|\ge2^k\cdot k$.
\end{lemma}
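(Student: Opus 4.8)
The plan is to extract the antichain by a recursion that alternately uses pointwise boundedness and the failure of uniform boundedness. Fix a bound $M_A=\sup_{n\io}|\mu_n(A)|<\infty$ provided by pointwise boundedness, for each clopen $A$. Since $\seqn{\mu_n}$ is not uniformly bounded, we have $\sup_{n\io}\|\mu_n\|=\infty$; recall that $\|\mu_n\|=\sup\{|\mu_n(B)|+|\mu_n(C)|\colon B\wedge C=0_{Clopen(K)}\}$, so for every constant $c$ there are $n$ and disjoint clopens $B,C$ with $|\mu_n(B)|+|\mu_n(C)|>c$, hence $|\mu_n(B\wedge C')|$ large for one of $B$ or $C$ after a suitable complementation; more simply, for every $c$ there is $n$ and a clopen $D$ with $|\mu_n(D)|>c$. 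This last fact is all I need.

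The recursion proceeds as follows. Suppose at step $k$ we have already chosen disjoint clopens $A_0,\dots,A_{k-1}$ and indices $n_0,\dots,n_{k-1}$ with $|\mu_{n_j}(A_j)|\ge 2^j\cdot j$, and let $U_{k-1}=A_0\vee\dots\vee A_{k-1}$ be their (clopen) union, with complement $U_{k-1}^c$. The key observation is that $Clopen(U_{k-1}^c)$, the algebra of clopen subsets of $K$ contained in $U_{k-1}^c$, is again unbounded for the restricted sequence: indeed $\mu_n = \mu_n\rstr U_{k-1} + \mu_n\rstr U_{k-1}^c$, and since $U_{k-1}$ is a fixed finite union of fixed clopens, $\sup_n\|\mu_n\rstr U_{k-1}\| \le \sum_{j<k}(M_{A_j}+M_{A_j^{} \wedge U_{k-1}})<\infty$ (a finite sum of finite pointwise bounds suffices because $U_{k-1}$ is built from finitely many sets; one can bound $\|\mu_n\rstr U_{k-1}\|$ by $4\sum_{j<k}M_{A_j}$ using additivity and the triangle inequality over the $2^{k}$ atoms generated by $A_0,\dots,A_{k-1}$). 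Therefore $\sup_n\|\mu_n\rstr U_{k-1}^c\| = \infty$, so there exist $n_k$ and a clopen $D\sub U_{k-1}^c$ with $|\mu_{n_k}(D)|$ as large as we wish; choosing it larger than $2^k\cdot k + M_{U_{k-1}^c}$ is not quite enough — I want a \emph{disjoint} new piece. So instead pick $D\sub U_{k-1}^c$ with $|\mu_{n_k}(D)|$ very large, then split: one of $D$ or $U_{k-1}^c\sm D$ carries most of the mass is not the point; rather, set $A_k = D$ directly, which is automatically disjoint from $A_0,\dots,A_{k-1}$ since $D\sub U_{k-1}^c$, and arrange $|\mu_{n_k}(A_k)|\ge 2^k\cdot k$ by choosing $D$ appropriately. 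This keeps $\seqk{A_k}$ an antichain as required.

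The main obstacle is the bookkeeping in the claim that restricting to the complement of a finite union preserves unboundedness, i.e. controlling $\sup_n\|\mu_n\rstr U_{k-1}\|$. This is where pointwise boundedness is genuinely used: the measure of each \emph{individual} previously chosen clopen is bounded across $n$, and since $U_{k-1}$ lies in the finite subalgebra generated by $A_0,\dots,A_{k-1}$ (with at most $2^k$ atoms), the total variation of $\mu_n$ on $U_{k-1}$ is bounded by a sum over those atoms of quantities each dominated by some $M_B$; hence it is bounded uniformly in $n$. Then $\|\mu_n\| \le \|\mu_n\rstr U_{k-1}\| + \|\mu_n\rstr U_{k-1}^c\| + (\text{cross terms})$, and since the left side is unbounded in $n$ while the $U_{k-1}$-part is bounded, the $U_{k-1}^c$-part must be unbounded. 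I would also remark that one should first pass to a subsequence of $\seqn{\mu_n}$ if one wants the indices $n_k$ distinct, though the statement as phrased does not require the $n_k$ to be distinct, so this is unnecessary. Finally, a minor point: the inequality $|\mu_{n_k}(A_k)|\ge 2^k\cdot k$ with $A_k$ clopen in $K$ (not just in $U_{k-1}^c$) is automatic since a clopen subset of the clopen set $U_{k-1}^c$ is clopen in $K$.
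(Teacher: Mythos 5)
Your recursion has a genuine gap at its ``key observation.'' You claim that $\sup_{n\io}\big\|\mu_n\rstr U_{k-1}\big\|<\infty$ because $U_{k-1}$ lies in the finite subalgebra generated by $A_0,\dots,A_{k-1}$ and each of finitely many sets carries a pointwise bound $M_B$. This is false: $\big\|\mu_n\rstr U_{k-1}\big\|$ is a supremum of $|\mu_n(B)|+|\mu_n(C)|$ over \emph{all} pairs of disjoint clopen subsets $B,C\sub U_{k-1}$, not over elements of the finite subalgebra, and pointwise boundedness gives no uniform control of the variation inside a fixed clopen set (a measure can have huge positive and negative parts inside $A_j$ that cancel on every set you have already fixed). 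Indeed, if your bounding technique were valid, applying it to the trivial decomposition $K=A\cup A^c$ would give $\sup_{n\io}\|\mu_n\|\le 4(M_A+M_{A^c})<\infty$ for every pointwise bounded sequence, i.e.\ it would prove that anti-Nikodym sequences do not exist at all, contradicting the hypothesis of the lemma. Consequently the inference ``the $U_{k-1}$-part is bounded, hence the $U_{k-1}^c$-part is unbounded'' breaks down, and the greedy recursion can genuinely stall. Concretely, on $K=2^\omega$ take $\mu_n=n(\delta_{x_n}-\delta_{y_n})$ with $x_n\neq y_n$ and $x_n,y_n\to z$: each clopen set eventually contains both or neither of $x_n,y_n$, so the sequence is pointwise bounded, while $\|\mu_n\|=2n$, so it is anti-Nikodym. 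Nothing in your construction prevents choosing $A_0$ to be a clopen set containing $x_{n_0}$ and $z$ but not $y_{n_0}$ (which has $|\mu_{n_0}(A_0)|=n_0$, as large as you like); but then for all large $n$ both $x_n$ and $y_n$ lie in $A_0$, so $\sup_{n\io}|\mu_n(D)|<\infty$ uniformly over clopen $D\sub A_0^c$, and no later $A_k\sub A_0^c$ can satisfy $|\mu_{n_k}(A_k)|\ge 2^k\cdot k$ for large $k$. The lemma is still true for this sequence, but only via a more careful choice (sets separating $x_{n_k}$ from $y_{n_k}$ and avoiding $z$), which your recursion does not guarantee.

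What is missing is precisely the nontrivial step the paper delegates to its references: at each stage one must find an index and a clopen set of large measure \emph{whose removal keeps the remaining sequence anti-Nikodym}, and the existence of such a set requires an argument (this is the content of \cite[Lemma 4.7]{Sob19}, applied inductively after \cite[Lemma 4.4]{Sob19}, which is the paper's entire proof). Your proposal treats this preservation as automatic via pointwise boundedness, and that is exactly where it fails.
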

\begin{proof}
See \cite[Lemma 4.4]{Sob19} and use \cite[Lemma 4.7]{Sob19} inductively.
\end{proof}

Recall that a family $\seq{A_\alpha}{\alpha<\omega_1}$ of infinite subsets of $\omega$ is \textit{almost disjoint} if $A_\alpha\cap A_\beta$ is finite for every $\alpha\neq\beta<\omega_1$.

\begin{lemma}\label{lem:measures_ad_families}
Let $\aA$ be an infinite Boolean algebra, $\mu$ a measure on $\aA$ and  $\seqn{B_n}$ an antichain in $\aA$. Fix an almost disjoint family $\seq{A_\alpha\in\cso}{\alpha<\omega_1}$. For each $\zeta<\omega_1$ and $i\in A_\zeta$ let $C_i^\zeta\in\aA$ be such that $C_i^\zeta\leq B_i$, and assume that the supremum $S_\zeta=\bigvee\big\{C_i^\zeta\colon\ i\in A_\zeta\big\}$ exists in $\aA$ for every $\zeta<\omega_1$. Then, there is $\eta<\omega_1$ such that
\[\big|\wh{\mu}\big|\big(\partial_{St(\aA)}\Big(\bigcup_{i\in A_\zeta}C_i^\zeta\Big)\big)=0\]
for every $\zeta\in[\eta,\omega_1)$.
\end{lemma}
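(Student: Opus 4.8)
The plan is to argue by contradiction using the finiteness of the variation $\big|\wh{\mu}\big|$ together with the almost disjointness of the family $\seq{A_\alpha}{\alpha<\omega_1}$. Suppose the conclusion fails; then there is an uncountable set $I\sub\omega_1$ such that for every $\zeta\in I$ we have $\big|\wh{\mu}\big|\big(\partial_{St(\aA)}\big(\bigcup_{i\in A_\zeta}C_i^\zeta\big)\big)>0$, and by shrinking $I$ (countably many of these values can only be positive) we may assume there is a single $\eps>0$ with $\big|\wh{\mu}\big|\big(\partial_{St(\aA)}\big(\bigcup_{i\in A_\zeta}C_i^\zeta\big)\big)\ge\eps$ for all $\zeta\in I$. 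The key point is to identify the boundary sets $F_\zeta:=\partial_{St(\aA)}\big(\bigcup_{i\in A_\zeta}C_i^\zeta\big)$ for distinct $\zeta$ as ``almost disjoint'' closed sets, so that a positive-measure lower bound on uncountably many of them contradicts the finiteness of $\big|\wh{\mu}\big|$. Since $\big|\wh{\mu}\big|$ is a finite measure, only countably many pairwise disjoint Borel sets can have measure $\ge\eps$, so it suffices to show that the $F_\zeta$ are (essentially) pairwise disjoint for $\zeta\in I$.

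The first step is to locate the boundary set $F_\zeta$ precisely. For a fixed $\zeta<\omega_1$, write $U_\zeta=\bigcup_{i\in A_\zeta}C_i^\zeta$ as a union of clopen sets; since $S_\zeta=\bigvee_{i\in A_\zeta}C_i^\zeta$ exists in $\aA$, its corresponding clopen set in $St(\aA)$ is the closure $\ol{U_\zeta}$, so $F_\zeta=\ol{U_\zeta}\sm U_\zeta=S_\zeta\sm\bigcup_{i\in A_\zeta}C_i^\zeta$. A point $t\in St(\aA)$ lies in $F_\zeta$ exactly when $S_\zeta\in t$ but $C_i^\zeta\notin t$ for every $i\in A_\zeta$; equivalently, $t$ is in $S_\zeta$ and is a limit of the family $\{C_i^\zeta:i\in A_\zeta\}$ (every clopen neighbourhood of $t$ meets infinitely many $C_i^\zeta$, since it meets $S_\zeta$ but is not contained in any finite subjoin). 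Because $C_i^\zeta\le B_i$ and the $B_i$ form an antichain, the sets $C_i^\zeta$ are pairwise disjoint, so $t\in F_\zeta$ forces: for every finite $e\sub A_\zeta$ and every clopen $W\ni t$, $W$ meets some $C_i^\zeta$ with $i\in A_\zeta\sm e$, i.e. $W\wedge B_i\neq 0_\aA$.

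The second and main step is the disjointness of $F_\zeta$ and $F_{\xi}$ for $\zeta\neq\xi$ in $I$. Suppose $t\in F_\zeta\cap F_\xi$. Then, unravelling the characterization above, every clopen neighbourhood $W$ of $t$ meets $B_i$ for infinitely many $i\in A_\zeta$ and also for infinitely many $j\in A_\xi$; in particular (after choosing $e$ to avoid the finite set $A_\zeta\cap A_\xi$) $W$ meets $B_i$ for some $i\in A_\zeta\sm A_\xi$ and meets $B_j$ for some $j\in A_\xi\sm A_\zeta$. This alone is not yet a contradiction, so the actual hard part is to squeeze more: one shows that $t$ must in fact be a limit point of $\{B_i:i\in A_\zeta\}$, hence $B_i\notin t$ for all $i$, hence $t$ is a fixed limit ultrafilter, and then the almost disjointness is used to conclude that $t$ cannot simultaneously be the limit ``along $A_\zeta$'' and ``along $A_\xi$'' of the single antichain $\seqn{B_n}$ — more carefully, one uses that a given ultrafilter $t$, being a limit of the antichain $\seqn{B_n}$, determines a unique subset $A_t=\{n:B_n\wedge W\neq 0_\aA\text{ for all clopen }W\ni t\}$ which must be \emph{cofinite} in every $A$ with $S_A=\bigvee_{n\in A}B_n\in t$; two almost disjoint sets cannot both be almost contained in a third via this mechanism unless... and here the antichain structure forces $A_\zeta$ and $A_\xi$ to have infinite intersection, the desired contradiction. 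I expect this disjointness argument — pinning down exactly how the ultrafilter $t$, the antichain $\seqn{B_n}$, and the almost disjoint family interact — to be the delicate part; once it is in place, finiteness of $\big|\wh{\mu}\big|$ immediately kills all but countably many $\zeta$, and taking $\eta=\sup I+1$ (or rather $\eta$ past the countable exceptional set) finishes the proof.
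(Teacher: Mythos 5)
Your overall strategy is the same as the paper's: show that the boundaries $F_\zeta=\partial_{St(\aA)}\big(\bigcup_{i\in A_\zeta}C_i^\zeta\big)$ are pairwise disjoint closed sets and then invoke finiteness of $\big|\wh{\mu}\big|$ (your $\eps$-uniformization is harmless but not needed for this). Your first step is also correct: since the clopen sets form a base of $St(\aA)$, the clopen set corresponding to the supremum $S_\zeta$ is exactly the closure of $U_\zeta=\bigcup_{i\in A_\zeta}C_i^\zeta$, so $F_\zeta=S_\zeta\sm U_\zeta$. However, the step you yourself flag as ``the delicate part'' --- disjointness of $F_\zeta$ and $F_\xi$ --- is left unproven, and the mechanism you sketch for it does not work. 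First, the set $A_t=\{n\colon B_n\wedge W\neq 0_\aA \text{ for all clopen } W\ni t\}$ is, because each $B_n$ is clopen and hence closed, simply $\{n\colon B_n\in t\}$, which has at most one element since the $B_n$ are pairwise disjoint; so it is never cofinite in an infinite $A_\zeta$, and the asserted ``cofinite in every $A$ with $\bigvee_{n\in A}B_n\in t$'' is false as stated. Second, and more fundamentally, no contradiction can be extracted merely from $t$ being a limit point of the antichain ``along $A_\zeta$'' and ``along $A_\xi$'': in a Stone space a point can perfectly well lie in the closures of two disjoint open sets, so almost disjointness of $A_\zeta$ and $A_\xi$ by itself forces nothing.

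The missing idea (and the paper's actual argument) is to use the suprema themselves as clopen neighbourhoods. From $S_\zeta=\ol{U_\zeta}$ one gets, for every $i\io$: if $i\notin A_\zeta$ then $B_i\cap S_\zeta=\emptyset$ (an open set disjoint from $U_\zeta$ is disjoint from its closure), while if $i\in A_\zeta$ then $B_i\cap S_\zeta=C_i^\zeta$ (the difference is a clopen subset of $\ol{U_\zeta}$ missing $U_\zeta$, hence empty); in particular $t\in F_\zeta$ implies $t\notin B_i$ for every $i\io$. Hence, if $t\in F_\zeta\cap F_\xi$ with $\zeta\neq\xi$, then $T=S_\zeta\sm\bigcup_{i\in A_\zeta\cap A_\xi}B_i$ is a clopen neighbourhood of $t$ (this is where almost disjointness enters: the subtracted union is finite, hence clopen), and $T$ is disjoint from every $C_i^\xi$: for $i\in A_\xi\cap A_\zeta$ because $T\cap B_i=\emptyset$, and for $i\in A_\xi\sm A_\zeta$ because $S_\zeta\cap B_i=\emptyset$. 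Thus $t\notin\ol{\bigcup_{i\in A_\xi}C_i^\xi}$, contradicting $t\in F_\xi$. With this disjointness in hand, your concluding counting step (a finite measure can give positive mass to at most countably many pairwise disjoint closed sets, so all $F_\zeta$ with $\zeta$ beyond some $\eta<\omega_1$ are null) is fine and matches the paper.
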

\begin{proof}
For every $\zeta<\omega_1$, let
\[D_\zeta=\partial_{St(\aA)}\Big(\bigcup_{i\in A_\zeta}C_i^\zeta\Big).\]
Then, for every $\zeta<\xi<\omega_1$ we have $D_\zeta\cap D_\xi=\emptyset$. To see this, assume that there is $x\in D_\zeta\cap D_\xi$. Of course, $x\in S_\zeta$, but  $x\not\in\bigcup_{i\in A_\zeta}C_i^\zeta$, since each $C_i^\zeta$ is open in $St(\aA)$. Similarly, $x\in S_\xi$ and $x\not\in\bigcup_{i\in A_\xi}C_i^\xi$. It follows that $x\not\in\bigcup_{i\io}B_i$, since also $B_i\cap C_j^\zeta=B_i\cap C_j^\xi=\emptyset$ for every $i\neq j\io$. Let
\[T=S_\zeta\sm\bigcup_{i\in A_\zeta\cap A_\xi}B_i;\]
then, $T$ is a clopen in $St(\aA)$ and $x\in T$. But for every $i\in A_\xi$ we also have $T\cap C_i^\xi=\emptyset$ (or even $T\cap B_i=\emptyset$), so $x\not\in D_\xi$, which is a contradiction.

It follows that the collection $\big\{D_\zeta\colon\ \zeta<\omega_1\big\}$ consists of pairwise disjoint closed subsets of $St(\aA)$. Since $\big|\wh{\mu}\big|$ is bounded, there exists $\eta<\omega_1$ such that for every $\zeta\in[\eta,\omega_1)$ we have $\big|\wh{\mu}\big|\big(D_\zeta\big)=0$.
\end{proof}

Note that in the above lemma the elements $C_i^\zeta$'s may be zero.

\begin{lemma}\label{lem:antichain_tails}
    Let $\seql{\mu_l}$ be a sequence of measures on a  Boolean algebra
    $\aA$ and $\seql{V_l}$ an antichain in $\aA$. Then, there exists a
    strictly increasing sequence $\seqp{l_p\io}$ such that
    \[\sum_{i>p}\big|\mu_{l_p}\big|\big(V_{l_i}\big)<1\]
    for every $p\io$.
\end{lemma}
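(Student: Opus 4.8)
The plan is to build the sequence $\seqp{l_p}$ recursively, exploiting the fact that for each fixed index $l$ the measure $|\mu_l|$ is bounded, so the series $\sum_{i\io}|\mu_l|(V_i)$ converges (the $V_i$'s being pairwise disjoint, the partial sums are bounded by $\|\mu_l\|$, hence the tails tend to $0$). First I would set $l_0=0$. Having chosen $l_0<l_1<\dots<l_p$, I would like to choose $l_{p+1}>l_p$ so large that the tail condition is preserved. The subtle point is that the condition $\sum_{i>p}|\mu_{l_p}|(V_{l_i})<1$ refers to \emph{all} later terms $l_{p+1},l_{p+2},\dots$, which have not yet been chosen; so a naive greedy choice does not immediately work, because fixing $l_{p+1}$ now constrains a sum we can only control once every $l_i$ with $i>p$ is known.

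The standard fix is to impose at stage $p+1$ a \emph{stronger, finitary} requirement that, summed geometrically, yields the desired bound. Concretely, when I pick $l_{p+1}$ I will demand two things: (a) for the \emph{new} index, $\sum_{i\geq p+1}|\mu_{l_{p+1}}|\big(V_i\big)<2^{-(p+2)}$ — this is possible since the tail of the convergent series $\sum_i|\mu_{l_{p+1}}|(V_i)$ goes to $0$, and note this bounds the eventual sum $\sum_{i>p+1}|\mu_{l_{p+1}}|(V_{l_i})$ regardless of the later choices, because $\{l_i:i>p+1\}\subseteq\{i:i\ge p+1\}$; and (b) for \emph{each} earlier chosen index $l_q$ with $q\le p$, $|\mu_{l_q}|\big(V_{l_{p+1}}\big)<2^{-(p+2)}$, which is possible because $\lim_{l\to\infty}|\mu_{l_q}|(V_l)=0$ (tail of a convergent series) for each of the finitely many $q\le p$, so all finitely many conditions can be met by taking $l_{p+1}$ large enough. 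Since at each stage only finitely many inequalities of the form ``a tail is small'' or ``a single term is small'' must be satisfied, and each is achievable by enlarging $l_{p+1}$, the recursion goes through.

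Finally I would verify the conclusion. Fix $p\io$. The relevant sum splits as
\[
\sum_{i>p}\big|\mu_{l_p}\big|\big(V_{l_i}\big)
=\big|\mu_{l_p}\big|\big(V_{l_{p+1}}\big)+\sum_{i>p+1}\big|\mu_{l_p}\big|\big(V_{l_i}\big).
\]
Wait—this is cleaner to organize slightly differently: by condition (a) applied at stage $p$ itself (i.e., taking the new-index bound to read $\sum_{i\ge p+1}|\mu_{l_p}|(V_i)<2^{-(p+2)}$ when $l_p$ was chosen), we directly get $\sum_{i>p}|\mu_{l_p}|(V_{l_i})\le\sum_{i\ge p+1}|\mu_{l_p}|(V_i)<2^{-(p+2)}<1$, using $\{l_i:i>p\}\subseteq\{i:i\ge p+1\}$ since the $l_i$ are strictly increasing with $l_{p+1}\ge p+1$. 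So in fact only condition (a) is needed, applied uniformly: at stage $p$, choose $l_p>l_{p-1}$ large enough that $\sum_{i\ge p+1}|\mu_{l_p}|\big(V_i\big)<1$, which is possible by convergence of $\sum_i|\mu_{l_p}|(V_i)$. This immediately gives the lemma; condition (b) is an unnecessary complication I would drop in the final write-up.

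The main (mild) obstacle is purely bookkeeping: making sure the set $\{l_i:i>p\}$ is contained in a tail of $\omega$ over which we have already arranged smallness, which the strict monotonicity $l_i\ge i$ guarantees. There is no real difficulty here beyond the convergence of a nonnegative series with bounded partial sums.
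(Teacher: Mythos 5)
Your final, ``cleaned-up'' argument has a genuine gap: the condition you keep, namely ``at stage $p$ choose $l_p>l_{p-1}$ so large that $\sum_{i\ge p+1}\big|\mu_{l_p}\big|\big(V_i\big)<1$'', cannot in general be met. Convergence of $\sum_i\big|\mu_{l}\big|\big(V_i\big)$ for each fixed $l$ only says that the tail becomes small as its \emph{starting index} grows; here the starting index $p+1$ is fixed, and what you enlarge is $l_p$, which changes the \emph{measure}, not the tail. Concretely, let $x_{l+1}$ be a point of the Stone space lying in $V_{l+1}$ and let $\mu_l$ be $100$ times the point mass at $x_{l+1}$, so $\big|\mu_l\big|\big(V_{l+1}\big)=100$ and $\big|\mu_l\big|\big(V_i\big)=0$ for $i\neq l+1$. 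Since strict monotonicity forces $l_p\ge p$, the sum $\sum_{i\ge p+1}\big|\mu_{l_p}\big|\big(V_i\big)$ always contains the term $\big|\mu_{l_p}\big|\big(V_{l_p+1}\big)=100$, so no choice of $l_p$ satisfies your requirement (while the lemma itself is easy for these measures, e.g.\ with $l_p=2p$). The same objection applies to condition (a) in your original two-condition scheme.

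The repair is to flip which quantity is fixed and which is chosen, and this is exactly what the paper does: having already fixed $l_p$ (hence the single finite measure $\mu_{l_p}$), choose the \emph{next} index $l_{p+1}>l_p$ so large that $\sum_{i\ge l_{p+1}}\big|\mu_{l_p}\big|\big(V_i\big)<1$, which is possible because the partial sums of $\sum_i\big|\mu_{l_p}\big|\big(V_i\big)$ are bounded by $\big\|\mu_{l_p}\big\|$; then every later $l_i$ lies in the tail $\{j\colon j\ge l_{p+1}\}$ and the desired inequality follows. Ironically, the condition you discarded, your (b), would also have worked on its own: requiring $\big|\mu_{l_q}\big|\big(V_{l_{p+1}}\big)<2^{-(p+2)}$ for all $q\le p$ is achievable (each $\mu_{l_q}$ is already fixed and $\big|\mu_{l_q}\big|\big(V_l\big)\to0$), and summing these bounds over $i>p$ gives $\sum_{i>p}\big|\mu_{l_p}\big|\big(V_{l_i}\big)<2^{-(p+1)}<1$. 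So you kept the unworkable half of your scheme and dropped the workable one.
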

\begin{proof}
Let $l_0=0$. Since the measure $\mu_{l_0}$ is finite and the sequence $\seqi{V_i}$ is an antichain in $\aA$, there is $l_1>l_0$ such that
\[\sum_{i\ge l_1}\big|\mu_{l_0}\big|\big(V_i\big)<1.\]
Similarly, since $\mu_{l_1}$ is finite, there is $l_2>l_1$ such that
\[\sum_{i\ge l_2}\big|\mu_{l_1}\big|\big(V_i\big)<1.\]
Continue in this manner to obtain a sequence $\seqp{l_p\io}$ with the required properties.
\end{proof}

We finish this section with the following simple technical lemma.

\begin{lemma}\label{lem:antichain_witness_selection}
    Let $\seqnk{m_{nk}\in\R}$ be an infinite square matrix satisfying the following conditions:
    \begin{itemize}
        \item $m_{nk}\ge0$ for every $n,k\io$,
        \item $\sum_{k\io}m_{nk}<\infty$ for every $n\io$,
        \item $\sup_{n\io}m_{nk}<\infty$ for every $k\io$,
        \item $m_{nn}>n$ for every $n\io$.
    \end{itemize}
    Then, there is a sequence $\seqp{l_p\io}$ such that
    \[m_{l_pl_p}>\sum_{i=0}^{p-1}m_{l_pl_i}+p+1\]
    for every $p\io$.
\end{lemma}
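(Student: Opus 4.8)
The plan is to construct the sequence $\seqp{l_p}$ recursively, choosing at each step an index $l_p$ that is so large along the diagonal that the finitely many ``bad'' contributions $m_{l_p l_0},\dots,m_{l_p l_{p-1}}$ (coming from the already-chosen columns) are dominated by $m_{l_p l_p}$ with room to spare for the additive slack $p+1$. The point is that once the columns $l_0,\dots,l_{p-1}$ are fixed, the hypothesis $\sup_{n}m_{nk}<\infty$ for each $k$ bounds each of the finitely many functions $n\mapsto m_{n l_i}$ by some constant, hence their sum is bounded by a single constant $M_p$ depending only on $p$ and the previous choices; on the other hand $m_{nn}>n$ forces the diagonal to be large for large $n$, so picking $l_p$ larger than $M_p+p+1$ (and larger than $l_{p-1}$, if one wants strict increase, though the statement does not require it) does the job.

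In more detail, I would start with $l_0=0$; then $m_{l_0 l_0}>0$ and the empty sum plus $0+1$ equals $1$, so we need $m_{00}>1$, which is \emph{not} guaranteed by $m_{00}>0$. Hence the base case should instead be to pick $l_0$ with $m_{l_0 l_0}>l_0>1$, i.e.\ any $l_0\ge 2$ works since $m_{l_0 l_0}>l_0\ge 2>0+1$; more uniformly, at stage $p$ having chosen $l_0<\dots<l_{p-1}$, set
\[
M_p=\sum_{i=0}^{p-1}\sup_{n\io}m_{n l_i},
\]
which is finite by the third hypothesis, and choose $l_p>l_{p-1}$ large enough that $l_p>M_p+p+1$. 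Then
\[
m_{l_p l_p}>l_p>M_p+p+1\ge\sum_{i=0}^{p-1}m_{l_p l_i}+p+1,
\]
using $m_{l_p l_i}\le\sup_n m_{n l_i}$ for each $i<p$, which is exactly the desired inequality.

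There is essentially no obstacle here; the only thing to be a little careful about is that the diagonal bound $m_{nn}>n$ is used to make $m_{l_p l_p}$ exceed the threshold $M_p+p+1$, which is legitimate precisely because $M_p$ does not depend on $l_p$ — it is frozen once the earlier indices are chosen. The first two hypotheses ($m_{nk}\ge 0$ and $\sum_k m_{nk}<\infty$) are not even needed for this lemma (nonnegativity is used only implicitly to drop the absolute values, but the inequality is stated with the raw $m_{l_p l_i}$ anyway); they are presumably listed for uniformity with how the matrix arises in the application. So the recursion goes through immediately, and the resulting $\seqp{l_p}$ satisfies the claimed estimate for every $p\io$.
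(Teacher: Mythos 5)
Your argument is correct and essentially identical to the paper's: the paper also proceeds recursively, taking $l_0=1$ (using $m_{11}>1$) and then choosing $l_{p+1}$ to be at least $\sum_{i\le p}\sup_{n\io}m_{nl_i}+p+2$ so that the diagonal hypothesis $m_{nn}>n$ dominates the frozen column suprema plus the slack. Your side remarks (that $l_0=0$ would fail and that only the third and fourth hypotheses are actually used) are accurate but do not change the substance.
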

\begin{proof}
    We construct the sequence $\seqp{l_p}$ inductively. First, since $m_{11}>1$, let $l_0=1$---the conclusion for $p=0$ trivially holds. Let now $r\ge0$ and assume that we have constructed a sequence $l_0,\ldots,l_r$ as required. Let:
    \[\alpha=\sum_{i=0}^r\big(\sup_{n\io}m_{nl_i}\big)+r+2;\]
    then, for $l_{r+1}=\lceil\alpha\rceil$ (the ceiling of $\alpha$) the thesis holds:
    \[m_{l_{r+1}l_{r+1}}>l_{r+1}\ge\alpha\ge\sum_{i=0}^rm_{l_{r+1}l_i}+r+2.\]
\end{proof}

\subsection{The Diamond Principle $\Diamond$}

There are many equivalent formulations of the Diamond Principle $\Diamond$, see e.g. Devlin \cite{Dev79}---in this paper we use the following one (cf. D\v{z}amonja and Plebanek \cite[p. 2071]{DP07}):
\begin{equation*}
\tag{$\Diamond$}\parbox{\dimexpr\linewidth-7em}{there exists an $\omega_1$-sequence $\seq{f_\alpha\in\big(2^\alpha\big)^\omega}{\omega\le\alpha<\omega_1}$ such that for every $f\in\big(2^{\omega_1}\big)^\omega$ the set
    \[\Big\{\alpha\in\big[\omega,\omega_1\big)\colon\ (\forall n\io)\big(f(n)\rstr\alpha=f_\alpha(n)\big)\Big\}\]
    is stationary in $\omega_1$.}
\end{equation*}
Given any bijection $\theta:\w_1\to\w\times\w_1,$ there is a club
$C\sub\w_1$ such that $\theta[\alpha]=\w\times\alpha$ for all
$\alpha\in C$. Thus the Diamond Principle is equivalent to
\begin{equation*}
\tag{$\Diamond'$}\parbox{\dimexpr\linewidth-7em}{there exists an
    $\omega_1$-sequence
    $\seq{f_\alpha\in\big(2^{\w\times\alpha}\big)^\omega}{\omega\le\alpha<\omega_1}$
    such that for every $f\in\big(2^{\w\times\omega_1}\big)^\omega$ the
    set
    \[\Big\{\alpha\in\big[\omega,\omega_1\big)\colon\ (\forall n\io)\big(f(n)\rstr(\w\times\alpha)=f_\alpha(n)\big)\Big\}\]
    is stationary in $\omega_1$.}
\end{equation*}
Of course, since 
$\Big(2^{\omega\times\alpha}\Big)^\omega=\Big(\big(2^\omega\big)^\alpha\Big)^\omega$
for every $\alpha\ge\omega$ and $\big|2^\omega\big|=\big|\R\big|$,
($\Diamond'$) may be reformulated as follows:
\begin{equation}
\tag{$\Diamond^*$}\parbox{\dimexpr\linewidth-7em}{there exists a sequence
    $\seq{h_\alpha\in\big(\R^\alpha\big)^\omega}{\omega\le\alpha<\omega_1}$
    such that for every $h\in\big(\R^{\omega_1}\big)^\omega$ the set
    \[\Big\{\alpha\in\big[\omega,\omega_1\big)\colon\ (\forall n\io)\big(h(n)\rstr\alpha=h_\alpha(n)\big)\Big\}\]
    is stationary in $\omega_1$.}
\end{equation}

We need to adjust ($\Diamond^*$) to a slightly more measure-theoretic
setting. The following fact is straightforward (cf. the proof of Lemma \ref{lem:var_det_club}).

\begin{lemma}\label{lem:club}
    Let $\seq{F_\xi}{\xi<\omega_1}$ be an enumeration of all elements of
    $\finsub{\Skin{\w_1}}$. Then, the set
    \[C=\Big\{\alpha<\omega_1\colon\ \big\{F_\xi\colon\ \xi<\alpha\big\}=\finsub{\Skin{\alpha}}\Big\}\]
    is a club in $\omega_1$.
\end{lemma}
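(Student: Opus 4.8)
The plan is to show that $C$ is a closed unbounded subset of $\omega_1$ by verifying the two defining properties separately, exploiting that $\finsub{\Skin{\omega_1}} = \bigcup_{\alpha < \omega_1} \finsub{\Skin{\alpha}}$ with each piece countable.

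\textbf{Closedness.} First I would check that $C$ is closed. Let $\gamma < \omega_1$ be a limit point of $C$, i.e. $\gamma = \sup(C \cap \gamma)$. I need $\{F_\xi : \xi < \gamma\} = \finsub{\Skin{\gamma}}$. The inclusion $\supseteq$ follows because every $G \in \finsub{\Skin{\gamma}}$ is already an element of $\finsub{\Skin{\alpha}}$ for some $\alpha < \gamma$ (a finite union of finite $0$–$1$ sequences has bounded domain, so there is $\alpha<\gamma$ above all the finitely many ordinals involved), and picking $\beta \in C$ with $\alpha \le \beta < \gamma$ gives $G \in \{F_\xi : \xi < \beta\} \subseteq \{F_\xi : \xi < \gamma\}$. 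The inclusion $\subseteq$ is even easier: for $\xi < \gamma$, choosing $\beta \in C$ with $\xi < \beta < \gamma$, we get $F_\xi \in \{F_{\xi'} : \xi' < \beta\} = \finsub{\Skin{\beta}} \subseteq \finsub{\Skin{\gamma}}$.

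\textbf{Unboundedness.} For the unboundedness, fix any $\alpha_0 < \omega_1$; I want $\beta \in C$ with $\beta > \alpha_0$. Build an increasing sequence $\alpha_0 < \alpha_1 < \alpha_2 < \cdots$ recursively so that $\{F_\xi : \xi < \alpha_{n+1}\} \supseteq \finsub{\Skin{\alpha_n}}$. This is possible because $\finsub{\Skin{\alpha_n}}$ is a countable set (it is the union over $n$ of the finite subsets of the countable set $\Skin{\alpha_n}$), so each of its members appears as some $F_\xi$, and only countably many indices $\xi$ are needed, hence they are bounded below $\omega_1$; take $\alpha_{n+1}$ above that bound (and above $\alpha_n$). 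Let $\beta = \sup_n \alpha_n < \omega_1$. Then $\{F_\xi : \xi < \beta\} = \bigcup_n \{F_\xi : \xi < \alpha_n\}$, and I must verify this equals $\finsub{\Skin{\beta}} = \bigcup_n \finsub{\Skin{\alpha_n}}$. The inclusion $\subseteq$: each $F_\xi$ with $\xi < \beta$ satisfies $\xi < \alpha_n$ for some $n$, and $F_\xi \in \finsub{\Skin{\omega_1}}$ has domain bounded by some $\alpha_m$, so $F_\xi \in \finsub{\Skin{\alpha_m}}$. The inclusion $\supseteq$: any $G \in \finsub{\Skin{\alpha_n}}$ lies in $\{F_\xi : \xi < \alpha_{n+1}\}$ by construction. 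Hence $\beta \in C$.

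\textbf{Main obstacle.} The only thing requiring genuine care is the bookkeeping around the domains of elements of $\finsub{\Skin{\omega_1}}$: an element $F$ is a \emph{finite} family of \emph{finite} $0$–$1$ sequences, so $\bigcup_{\sigma \in F}\dom(\sigma)$ is a finite subset of $\omega_1$ and thus has a supremum below $\omega_1$; this is what lets one assert $F \in \finsub{\Skin{\alpha}}$ for suitable $\alpha<\omega_1$, and it is also what makes $\finsub{\Skin{\alpha}}$ countable for countable $\alpha$ (since $\Skin{\alpha}$ is countable and finite subsets of a countable set form a countable family). Everything else is a routine closure-point / supremum argument. I would also remark that this is a standard instance of the fact that for any set $X$ of size $\omega_1$ written as an increasing union of countable pieces $X = \bigcup_{\alpha<\omega_1} X_\alpha$, the set of $\alpha$ such that $\{F_\xi : \xi<\alpha\} = X_\alpha$ (for any enumeration $\langle F_\xi : \xi<\omega_1\rangle$ compatible with the filtration in the sense that $F_\xi \in X_\alpha$ for some $\alpha=\alpha(\xi)$ depending only on $F_\xi$) is a club, which is exactly the situation here with $X_\alpha = \finsub{\Skin{\alpha}}$.
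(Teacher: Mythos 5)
Your closedness argument is correct, but the unboundedness argument has a genuine gap in the inclusion $\{F_\xi:\ \xi<\beta\}\subseteq\finsub{\Skin{\beta}}$. In the recursion you only arranged $\{F_\xi:\ \xi<\alpha_{n+1}\}\supseteq\finsub{\Skin{\alpha_n}}$, and then at the limit you assert that each $F_\xi$ with $\xi<\beta$ ``has domain bounded by some $\alpha_m$''. That does not follow from anything you did: $\bigcup_{\sigma\in F_\xi}\dom(\sigma)$ is a finite subset of $\omega_1$, hence bounded below $\omega_1$, but there is no reason it should be bounded below $\beta=\sup_n\alpha_n$. The enumeration is arbitrary, so for instance $F_0$ could be $\{\sigma\}$ with $\dom(\sigma)=\{\eta\}$ for some countable $\eta$ far above everything your countable recursion ever reaches; then $F_0\in\{F_\xi:\ \xi<\beta\}\setminus\finsub{\Skin{\beta}}$ and $\beta\notin C$. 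Membership in $C$ is an equality, and your construction only secures one of the two inclusions.

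The fix is easy and is the standard two-sided closure: in the recursive step require in addition that $\alpha_{n+1}$ be above $\sup\big\{\max\bigcup_{\sigma\in F_\xi}\dom(\sigma)\colon\ \xi<\alpha_n\big\}$, which is a countable ordinal (countably many finite subsets of $\omega_1$), as well as above the bound you already used for the indices enumerating the countable set $\finsub{\Skin{\alpha_n}}$. Then at $\beta=\sup_n\alpha_n$ both inclusions hold: every $G\in\finsub{\Skin{\beta}}$ lies in some $\finsub{\Skin{\alpha_n}}$ (finiteness of domains plus $\beta$ limit) and hence is some $F_\xi$ with $\xi<\alpha_{n+1}<\beta$, and every $F_\xi$ with $\xi<\alpha_n$ has all its domains inside $\alpha_{n+1}\subseteq\beta$, so $F_\xi\in\finsub{\Skin{\beta}}$. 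With this amendment your argument matches the closure-point scheme the paper has in mind (it leaves the lemma as straightforward, pointing to the analogous argument for Lemma \ref{lem:var_det_club}).
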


Using Lemma \ref{lem:club}, we derive the following consequence of
($\Diamond^*$). As we will  see in the proof of Theorem
\ref{thm:min_gen_nik}, the sets $\finsub{\Skin{\alpha}}$ for
$\alpha\le\omega_1$ will allow us to describe in a natural way the
Boolean algebras $Clopen\big(2^\alpha\big)$.
\begin{lemma}\label{lem:diamond_clopens}
    Assuming $\Diamond$, there exist an $\omega_1$-sequence
    \[\seq{f_\alpha\in\big(\mathbb{R}^{[\Skin{\alpha}]^{<\omega}}\big)^\omega}{\omega\le\alpha<\omega_1}\]
    such that for every
    $f\in\big(\mathbb{R}^{[\Skin{\w_1}]^{<\omega}}\big)^\omega$ the
    set
    \[\Big\{\alpha\in\big[\omega,\omega_1\big)\colon\ (\forall n\io)\big(f(n)\rstr \finsub{\Skin{\alpha}}=f_\alpha(n)\big)\Big\}\]
    is stationary in $\omega_1$.
\end{lemma}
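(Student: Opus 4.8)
The plan is to obtain $\seqf{f_\alpha}$ simply by transporting the sequence $\seqf{h_\alpha}$ from $(\Diamond^*)$ along a fixed coding bijection between the index set $[\Skin{\w_1}]^{<\omega}$ and $\w_1$, and then correcting it on a club so that restrictions match up. First I would fix, once and for all, an enumeration $\seq{F_\xi}{\xi<\w_1}$ of $\finsub{\Skin{\w_1}}$ as in Lemma \ref{lem:club}, and let $C\sub\w_1$ be the resulting club of ordinals $\alpha$ for which $\{F_\xi\colon\xi<\alpha\}=\finsub{\Skin{\alpha}}$; by shrinking we may assume $\w\subseteq C$ and every element of $C$ is $\ge\w$. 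The enumeration gives, for each $h\in(\R^{[\Skin{\w_1}]^{<\omega}})^\omega$, a corresponding $h^*\in(\R^{\w_1})^\omega$ defined by $h^*(n)(\xi)=h(n)(F_\xi)$, and this correspondence is a bijection; conversely any $g\in(\R^{\w_1})^\omega$ comes from a unique $h$ via $h(n)(F_\xi)=g(n)(\xi)$.

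Next I would apply $(\Diamond^*)$ to get a sequence $\seq{h_\alpha\in(\R^\alpha)^\omega}{\w\le\alpha<\w_1}$ with the stated guessing property, and define, for $\alpha\in C$, the function $f_\alpha\in(\R^{[\Skin{\alpha}]^{<\omega}})^\omega$ by $f_\alpha(n)(F_\xi)=h_\alpha(n)(\xi)$ for $\xi<\alpha$ — this is well-typed precisely because $\alpha\in C$ forces $\{F_\xi\colon\xi<\alpha\}=\finsub{\Skin{\alpha}}$, so $F_\xi$ ranges exactly over $\finsub{\Skin{\alpha}}$ as $\xi$ ranges over $\alpha$. For $\alpha\in[\w,\w_1)\sm C$ set $f_\alpha$ to be anything, say constantly $0$. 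To verify the guessing property, take an arbitrary $f\in(\R^{[\Skin{\w_1}]^{<\omega}})^\omega$ and form $h=f^*\in(\R^{\w_1})^\omega$ with $h(n)(\xi)=f(n)(F_\xi)$; by $(\Diamond^*)$ the set $S=\{\alpha\colon(\forall n)\,h(n)\rstr\alpha=h_\alpha(n)\}$ is stationary, hence so is $S\cap C$, and for $\alpha\in S\cap C$ and all $n$ we get $f_\alpha(n)(F_\xi)=h_\alpha(n)(\xi)=h(n)(\xi)=f(n)(F_\xi)$ for every $\xi<\alpha$, i.e. $f(n)\rstr\finsub{\Skin{\alpha}}=f_\alpha(n)$. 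Thus $\{\alpha\colon(\forall n)\,f(n)\rstr\finsub{\Skin{\alpha}}=f_\alpha(n)\}\supseteq S\cap C$ is stationary.

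The only point requiring a little care — and the one I would highlight — is the bookkeeping that the identification $\finsub{\Skin{\alpha}}\leftrightarrow\alpha$ provided by the fixed enumeration is genuinely \emph{coherent} across levels: because the enumeration $\seqf{F_\xi}$ is global and $C$ is exactly the club where initial segments of the enumeration capture $\finsub{\Skin{\alpha}}$ on the nose, the index $\xi$ assigned to a given finite set $F\in\finsub{\Skin{\alpha}}$ does not depend on the level $\alpha\in C$ at which we view it, so the restriction $f(n)\rstr\finsub{\Skin{\alpha}}$ on the Boolean-algebra side corresponds precisely to $h(n)\rstr\alpha$ on the $(\Diamond^*)$ side. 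Once this is observed, everything else is the routine ``stationary $\cap$ club is stationary'' argument, and the use of $|2^\w|=|\R|$ to pass from $(\Diamond)$ through $(\Diamond')$ to $(\Diamond^*)$ has already been done in the excerpt.
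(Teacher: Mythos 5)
Your proposal is correct and follows essentially the same route as the paper's proof: transport the $(\Diamond^*)$-sequence along a fixed enumeration $\seq{F_\xi}{\xi<\w_1}$ of $\finsub{\Skin{\w_1}}$, use the club $C$ from Lemma \ref{lem:club} to make the identification $\finsub{\Skin{\alpha}}\leftrightarrow\alpha$ coherent, define $f_\alpha(n)(F_\xi)=h_\alpha(n)(\xi)$ on $C$, and conclude by intersecting the $(\Diamond^*)$-stationary set with $C$. The only blemish is the phrase ``$\w\sub C$ and every element of $C$ is $\ge\w$'', which is self-contradictory as written; what you evidently intend, namely replacing $C$ by $C\cap[\w,\w_1)$, is harmless and does not affect the argument.
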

\begin{proof}
    Let $\seq{h_\alpha}{\alpha<\w_1\rangle}$ be a witness for ($\Diamond^*$), $\seq{F_\xi}{\xi<\omega_1}$ an enumeration of all elements of $\finsub{\Skin{\omega_1}}$ and
    $C$  the corresponding club given by Lemma
    \ref{lem:club}. For every $\alpha\in C$, $n\in\w$, and $F\in
    \finsub{\Skin{\alpha}}$ set  $f_\alpha(n)(F)=h_{\alpha}(n)(\xi)$,
    where $\xi$ is such that $F=F_\xi$. Such  $\xi$ exists because
    $\alpha\in C$. For every $\alpha\not\in C$ pick any $f_\alpha$ with
    appropriate domain and range.

    Fix any
    $f\in\big(\mathbb{R}^{[\Skin{\w_1}]^{<\omega}}\big)^\omega$,
    $n\in\w$, $\xi<\w_1$, and set $h(n)(\xi)=f(n)(F_\xi)$. Let $S$ be
    the stationary set witnessing ($\Diamond^*$) for $h$ and let $\alpha\in C\cap S$.
    Then for every $\xi<\alpha$ and $n\in\w$ we have
    $$ f(n)(F_\xi)=h(n)(\xi)=h_\alpha(n)(\xi)=f_\alpha(n)(F_\xi), $$
    which means $f(n)\rstr\finsub{\Skin{\alpha}}=f_\alpha(n)$ and thus
    completes our proof.
\end{proof}

For a function $f\in\R^{[\Skin{\alpha}]^{<\w}}$,
$\alpha\le\omega_1$, we define its \textit{total
    variation}\footnote{The index $\alpha$ is of course superfluous in
    this notation as $\alpha=\bigcup_{F\in\mathrm{dom}(f)}\bigcup_{s\in
        F}\mathrm{dom}(s)$. However, we believe that adding this index makes
    the corresponding formulas easier to understand.}
\[|f|_\alpha\in\big(\R\cup\{\infty\}\big)^{[\Skin{\alpha}]^{<\w}}\]
as follows:
\[|f|_\alpha(F)=\]
\[\sup\big\{|f(G)|+|f(H)|\colon\ G,H\in\finsub{\Skin{\alpha}}, [G]_\alpha\cap[H]_\alpha=\emptyset, [G]_\alpha\cup[H]_\alpha\sub[F]_\alpha\big\}\]
\smallskip

\noindent for every $F\in\finsub{\Skin{\alpha}}$. If $\beta\le\alpha$ and $F\in\finsub{\Skin{\beta}]}$,
then we say that $|f|_\alpha(F)$ is \textit{$\beta$-determined}  if
\[|f|_\alpha(F)=\big|f\uhr\finsub{\Skin{\beta}}\big|_\beta(F).\]
The total variation $|f|_\alpha$ is \textit{$\beta$-determined}  if
$|f|_\alpha(F)$ is $\beta$-determined  for every $F\in
\finsub{\Skin{\beta}}$. Note that in order to determine
$|f|_\alpha(F)$ for some $F\in\finsub{\Skin{\alpha}}$, we only need
countably many sets from $\finsub{\Skin{\alpha}}$.

\begin{lemma}\label{lem:var_det_club}
    Let $f\in\R^{[\Skin{\w_1}]^{<\w}}$. Then, the set
    \[D=\Big\{\alpha<\omega_1\colon\ |f|_{\w_1} \text{ is $\alpha$-determined }\Big\}\]
    is a club in $\omega_1$.
\end{lemma}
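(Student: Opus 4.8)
The plan is to show that $D$ contains a club, which suffices since it is clearly closed downward-compatible but let me argue it is closed and unbounded directly. First I would establish that $D$ is \emph{unbounded}: given any $\beta_0 < \omega_1$, I want to find $\alpha \in D$ with $\alpha > \beta_0$. The key observation (noted at the end of the definition preceding this lemma) is that to compute $|f|_{\omega_1}(F)$ for a single $F \in \finsub{\Skin{\omega_1}}$, only countably many sets $G, H \in \finsub{\Skin{\omega_1}}$ are relevant — more precisely, $|f|_{\omega_1}(F)$ is a supremum over all disjoint pairs $[G]_{\omega_1}, [H]_{\omega_1} \subseteq [F]_{\omega_1}$, and this supremum is approximated to within $1/m$ by some pair $(G_m^F, H_m^F)$ for each $m \in \omega$. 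So I would do a standard closing-off argument: starting from $\beta_0$, build an increasing $\omega$-sequence $\beta_0 \le \beta_1 \le \beta_2 \le \cdots$ where at stage $k+1$ I throw into $\beta_{k+1}$ all the ordinals appearing in the domains of the finitely-many sequences occurring in witnessing pairs for all $F \in \finsub{\Skin{\beta_k}}$ — but $\finsub{\Skin{\beta_k}}$ is countable, and for each such $F$ we need only countably many witnessing pairs, so only countably many ordinals get collected, and $\beta_{k+1} < \omega_1$ can be chosen to contain them all. Then $\alpha = \sup_k \beta_k < \omega_1$.

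The point is that for this $\alpha$, and any $F \in \finsub{\Skin{\alpha}}$, we have $F \in \finsub{\Skin{\beta_k}}$ for some $k$, so all the witnessing pairs $(G_m^F, H_m^F)$ (achieving the sup to within $1/m$) already lie in $\finsub{\Skin{\beta_{k+1}}} \subseteq \finsub{\Skin{\alpha}}$. Hence the supremum defining $|f|_{\omega_1}(F)$ — which a priori ranges over disjoint pairs with coordinates anywhere in $\omega_1$ — is already attained in the limit by pairs with coordinates below $\alpha$, giving $|f|_{\omega_1}(F) \le |f \uhr \finsub{\Skin{\alpha}}|_\alpha(F)$. The reverse inequality $|f|_{\omega_1}(F) \ge |f\uhr\finsub{\Skin\alpha}|_\alpha(F)$ is automatic because any pair $G, H \in \finsub{\Skin\alpha}$ with $[G]_\alpha \cap [H]_\alpha = \emptyset$ and $[G]_\alpha \cup [H]_\alpha \subseteq [F]_\alpha$ satisfies the same containment and disjointness relations at level $\omega_1$ (pulling back clopen sets under $\pr_\alpha^{\omega_1}$ preserves $\cap$, $\cup$, $\subseteq$, and $\emptyset$). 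So $|f|_{\omega_1}(F) = |f\uhr\finsub{\Skin\alpha}|_\alpha(F)$, i.e. $|f|_{\omega_1}$ is $\alpha$-determined, i.e. $\alpha \in D$.

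For \emph{closedness}: suppose $\langle \alpha_i \colon i < \lambda\rangle$ is an increasing sequence in $D$ with supremum $\alpha < \omega_1$, $\lambda$ a limit ordinal. Given $F \in \finsub{\Skin{\alpha}}$, since $F$ is finite and $\alpha = \sup_i \alpha_i$, there is $i$ with $F \in \finsub{\Skin{\alpha_i}}$. Then $|f|_{\omega_1}(F) = |f\uhr\finsub{\Skin{\alpha_i}}|_{\alpha_i}(F)$ because $\alpha_i \in D$; and one checks directly from the definitions (again using that $\pr$-preimages respect the Boolean operations) that $|f\uhr\finsub{\Skin{\alpha_i}}|_{\alpha_i}(F) = |f\uhr\finsub{\Skin\alpha}|_\alpha(F)$ — the intermediate witnessing pairs for the $\alpha_i$-variation are also legitimate for the $\alpha$-variation and vice versa, by the same closing-off phenomenon, so in fact $|f|_{\omega_1}(F) = |f\uhr\finsub{\Skin\alpha}|_\alpha(F)$, giving $\alpha \in D$.

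The main obstacle — really the only subtle point — is making the countable closing-off bookkeeping precise: one must verify that $\finsub{\Skin\beta}$ is countable for $\beta < \omega_1$ (clear, as $\Skin\beta$ is countable), and that collecting, for each of its countably many elements $F$, the coordinates of an $\omega$-indexed family of $1/m$-witnessing pairs, still yields only countably many ordinals, hence a bound below $\omega_1$. Everything else is a routine check that the clopen-algebra operations commute with projections $\pr_\alpha^{\omega_1}$, so that disjointness and containment of the basic clopen sets $[G]$, $[H]$, $[F]$ are ``level-independent.'' I would present it as: Claim 1, $D$ is unbounded (closing-off); Claim 2, $D$ is closed; combine.
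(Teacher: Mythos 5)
Your proposal is correct and follows essentially the same route as the paper's proof: closedness via the observation that any $F\in\finsub{\Skin{\alpha}}$ already lies in some $\finsub{\Skin{\alpha_n}}$, and unboundedness via an $\omega$-step closing-off using that each value $|f|_{\w_1}(F)$ is determined by countably many witnessing pairs, whose coordinates can be bounded below $\omega_1$. (Two cosmetic points: your ``within $1/m$'' wording needs the obvious adjustment when the supremum is infinite, and in the closedness step the equality $\big|f\uhr\finsub{\Skin{\alpha_i}}\big|_{\alpha_i}(F)=\big|f\uhr\finsub{\Skin{\alpha}}\big|_{\alpha}(F)$ is cleaner to get by sandwiching the $\alpha$-variation between the $\alpha_i$-variation and $|f|_{\w_1}(F)$, since a witnessing pair at level $\alpha$ need not live at level $\alpha_i$.)
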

\begin{proof}
%
    Let $\seqn{\alpha_n}$ be a strictly increasing sequence  in $D$ and
    $\alpha=\sup_{n\io}\alpha_n$. For every $F\in
    \finsub{\Skin{\alpha}}$ there is $n\io$ such that $F\in
    \finsub{\Skin{\alpha_n}}$, so $|f|_{\w_1}(F)$ is
    $\alpha_n$-determined  and hence also $\alpha$-determined. It
    follows that $\alpha\in D$ and thus $D$ is closed.

    Let now $\beta<\omega_1$. Let  $\seqn{\alpha_n<\omega_1}$ be such a
    sequence that $\alpha_0=\beta$ and $|f|_{\w_1}(F)$ is
    $\alpha_{n+1}$-determined for all $F\in\finsub{\Skin{\alpha_n}}$
    and $n\in\w$. Put $\alpha=\sup_{n\io}\alpha_n$. Of course,
    $\alpha\ge\beta$. It follows that $|f|_{\w_1} $ is
    $\alpha$-determined. Indeed, if $F\in\finsub{\Skin{\alpha}}$, then
    $F\in\finsub{\Skin{\alpha_n}}$ for some $n\io$, so $|f|_{\w_1}(F)$
    is $\alpha_{n+1}$-determined and hence also $\alpha$-determined.
    This yields that $\alpha\in D$ and thus  $D$ is unbounded.
\end{proof}

\section{The first example}

We are ready to prove the main result of this paper.

\begin{theorem}\label{thm:min_gen_nik}
Assuming $\Diamond$, there exists a minimally generated Boolean algebra $\aA$ with the Nikodym property.
\end{theorem}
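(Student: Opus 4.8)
The plan is to build $\aA$ as the union of an increasing chain $\seq{\aA_\alpha}{\alpha\le\omega_1}$ of subalgebras, with $\aA_0\cong Fr(\omega)$ (concretely $Clopen(2^\omega)$), each $\aA_{\alpha+1}$ a minimal extension of $\aA_\alpha$, and $\aA_\gamma=\bigcup_{\alpha<\gamma}\aA_\alpha$ at limits. Dually this is the inverse limit of a continuous system $\seq{K_\alpha,\pi^\beta_\alpha}{\alpha<\beta\le\omega_1}$ based on simple extensions, where each $K_\alpha$ is realized as a perfect closed subset of $2^\alpha$ and $\aA_\alpha=Clopen(K_\alpha)$, described via the sets $\finsub{\Skin{\alpha}}$ as indicated before Lemma \ref{lem:diamond_clopens}. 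The point we split at stage $\alpha$ will be the limit point of a carefully chosen convergent antichain, and we use Lemma \ref{lem:simple_ext_convergent_antichain} (equivalently Lemma \ref{lem:min_ext_convergent_antichain}) to perform the extension so that $V=\bigvee_{n\in A}V_n$ becomes a new clopen set, $A\sub\omega$ infinite and co-infinite. The key bookkeeping device is the $\Diamond$-sequence $\seq{f_\alpha}{\omega\le\alpha<\omega_1}$ from Lemma \ref{lem:diamond_clopens}: we regard $f_\alpha(n)$ as a candidate finitely additive measure on $\aA_\alpha\cong Clopen(2^\alpha)$ (coded as a real-valued function on $\finsub{\Skin{\alpha}}$), and we use $|f_\alpha|_\alpha$ (the total variation) to decide, at stage $\alpha$, how to extend.

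The main task is to guarantee the Nikodym property of the final algebra $\aA$. Suppose $\seqn{\mu_n}$ is a pointwise bounded sequence of measures on $\aA$ that is not uniformly bounded; by Lemma \ref{lem:aN_antichain} we may assume there is an antichain $\seqk{A_k}$ in $\aA$ and indices $n_k$ with $|\mu_{n_k}(A_k)|\ge 2^k\cdot k$. Each $A_k$ lies in some $\aA_{\beta_k}$, and each $\mu_n$ restricted to each $\aA_\beta$ is coded by a function in $\R^{\finsub{\Skin{\beta}}}$; collecting all this data gives a function $f\in(\R^{\finsub{\Skin{\omega_1}}})^\omega$ coding the whole sequence $\seqn{\mu_n}$. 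The $\Diamond$-guessing (Lemma \ref{lem:diamond_clopens}) gives a stationary set of $\alpha$ at which $f_\alpha(n)=f(n)\uhr\finsub{\Skin{\alpha}}$ for all $n$; intersecting with the clubs from Lemma \ref{lem:var_det_club} (variation $\alpha$-determined for each coordinate) and Lemma \ref{lem:club}, plus a club capturing enough of the antichain and of the measures' finiteness, we land on a stage $\alpha$ where the partial data $f_\alpha$ is a faithful "snapshot" of the anti-Nikodym sequence. At such an $\alpha$ we will have arranged the construction so that the minimal extension $\aA_{\alpha+1}$ produces a clopen set $V_\alpha=\bigvee_{i\in A}V_i$ (for a suitable infinite co-infinite $A\sub\omega$) on which the variations $|\mu_{n_k}|(V_\alpha)$ are forced to blow up — i.e., the very act of adding $V_\alpha$ witnesses that $\seqn{\mu_n}$ is already not pointwise bounded in $\aA_{\alpha+1}$, contradiction. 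Concretely: using Lemmas \ref{lem:antichain_tails} and \ref{lem:antichain_witness_selection} we thin out the antichain and the index sequence so that the "diagonal" contributions $|\mu_{n_{l_p}}(A_{l_p})|$ dominate the "off-diagonal tails" $\sum_{i\neq p}|\mu_{n_{l_p}}|(A_{l_i})$; then for the half of indices we throw into $A$ we get $|\mu_{n_{l_p}}(V_\alpha)|\to\infty$ along $p$ with $l_p\in A$, so $V_\alpha\in\aA_{\alpha+1}\sub\aA$ is a point of unboundedness.

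The subtlety — and this is the main obstacle — is that we only see one measure-sequence per stage, but during the remaining stages $\beta\in(\alpha+1,\omega_1)$ further points get split, and we must ensure this later splitting does not "heal" the blow-up, i.e. that the supremum $V_\alpha\in\aA_{\alpha+1}$ stays a genuine clopen set whose $\mu_n$-values are unchanged in $\aA$. This is where Lemma \ref{lem:simple_ext_empty_int} and Lemma \ref{lem:measures_ad_families} enter: the former shows that the fibers of the splitting points are nowhere dense in all later $K_\delta$, and the latter (applied with an almost disjoint family of the relevant index sets $A$, one "thread" per potential anti-Nikodym sequence) shows that for all but countably many choices the boundary of $\bigcup_{i\in A}V_i$ in the final Stone space is $|\wh{\mu}|$-null, so $V_\alpha$ remains clopen of the correct measure. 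One then also checks the routine side conditions: that each $\aA_\alpha$ is genuinely a perfect subalgebra of $2^\alpha$ so that Lemma \ref{lem:simple_ext_empty_int} applies, that the Stone space of $\aA$ never contains a copy of $\bo$ (automatic for minimally generated algebras, and needed only for the corollary about Efimov spaces, not for the theorem itself), and that the construction actually reaches length $\omega_1$ (we always have infinitely many convergent antichains available since each $K_\alpha$ is infinite totally disconnected, so there is always a nontrivial splitting to perform). Assembling these, the diagonal argument yields a contradiction with the existence of $\seqn{\mu_n}$, proving $\aA$ has the Nikodym property.
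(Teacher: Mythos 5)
Your overall strategy is the same as the paper's: realize $\aA$ as $Clopen(K_{\omega_1})$ for an inverse limit of simple extensions of perfect closed subsets of $2^\alpha$, use the $\Diamond$-sequence of Lemma \ref{lem:diamond_clopens} to guess coded measure sequences, kill each guessed anti-Nikodym sequence by adding (via Lemmas \ref{lem:aN_antichain}, \ref{lem:antichain_witness_selection}, \ref{lem:antichain_tails}, \ref{lem:simple_ext_convergent_antichain}) the supremum of a thinned convergent antichain as a new clopen set, and at the end combine Lemma \ref{lem:var_det_club} with stationarity to find faithful stages; you also correctly identify that the danger is the mass sitting on the fiber over the split point, which lies in the boundary of the union of the pulled-back $V_i$'s (Lemma \ref{lem:simple_ext_empty_int}).

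Where your argument has a genuine gap is precisely at that last point. At a single guessed stage $\gamma$ you cannot ``arrange'' that $\partial_{K_{\omega_1}}\big(\bigcup_i\big(\pi^{\omega_1}_\gamma\big)^{-1}\big[V^\gamma_i\big]\big)$ is $\big|\wh{\mu}_n\big|$-null: when the construction commits to the index set $A$ at stage $\gamma$, the final space $K_{\omega_1}$ does not yet exist, so the measure of this boundary is beyond the construction's control, and no choice made at that stage can guarantee it. The paper's fix is global rather than stage-local, and it has two ingredients your sketch omits. First, the advance enumerations $\seq{F^\alpha_n}{\alpha<\omega_1}$ and $\seq{s_\alpha}{\alpha<\omega_1}$ together with the \emph{minimal} choices of $\beta_\gamma$ and $\xi_\gamma$ force every guessed stage $\gamma\in\Gamma$ to work below the \emph{same} antichain $\seqk{W_k}$ and with the same subsequence of measures ($\beta_\gamma=\beta$, $\xi_\gamma=\xi$). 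Second, the almost disjoint family is indexed by \emph{stages} ($A_\gamma$ for stage $\gamma$), so distinct guessed stages use pairwise almost disjoint index sets; your phrasing ``one thread per potential anti-Nikodym sequence'' ties the a.d.\ set to the sequence, in which case all stages guessing that sequence would use the same index set and Lemma \ref{lem:measures_ad_families} would give nothing. Only with both ingredients does Lemma \ref{lem:measures_ad_families} apply (its hypothesis needs one fixed antichain $\seqn{B_n}$ and a.d.\ index sets), yielding pairwise disjoint boundaries and hence an $\eta<\omega_1$ beyond which all these boundaries are null for the countably many measures $\mu_{s_\xi(k)}$; picking any guessed $\gamma\ge\eta$ then gives $\big|\mu_{s_\xi(k_{l_p})}\big(\big[F_\gamma\big]_{\omega_1}\cap K_{\omega_1}\big)\big|>p$ for all $p$, contradicting pointwise boundedness. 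Adding this coherence bookkeeping and the ``stationarily many stages, choose one past $\eta$'' step turns your sketch into the paper's proof.
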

\begin{proof}
Let us fix the following sequences:
\begin{itemize}
    \item $\seq{f_\alpha\in\big(\mathbb{R}^{\mathrm{Fin}(\alpha,2)}\big)^\omega}{\omega\le\alpha<\omega_1}$,
    a sequence from Lemma \ref{lem:diamond_clopens};
    \item $\seq{A_\alpha}{\alpha<\omega_1}$, an almost disjoint family of subsets of $\cso$;
    \item $\seq{\seqn{F_n^\alpha\in\finsub{\Skin{\w_1}}}}{\alpha<\omega_1}$, an enumeration of all
    sequences of finite collections of finite partial functions from $\w_1$ to $2$ such that $\big[F_n^\alpha\big]_{\w_1}\cap\big[F_k^\alpha\big]_{\w_1}=\emptyset$ for every $\alpha<\w_1$ and $n\neq k\io$; and
    \item $\seq{s_\alpha\ioo}{\alpha<\omega_1}$, an enumeration of all strictly increasing sequences of natural numbers.
\end{itemize}
We will construct an inverse  system
$\seq{K_\alpha,\pi_\alpha^\beta}{\omega\le\alpha<\beta\le\omega_1}$
based on simple extensions and such that $K_\alpha$ will be a closed
perfect subset of the space $2^\alpha$ for every
$\omega\le\alpha\le\omega_1$, $\pi_\alpha^\beta\colon K_\beta\to
K_\alpha$ will be such that $\pi_\alpha^\beta=\pr_\alpha^\beta\rstr
K_\beta$ for every $\omega\le\alpha<\beta\le\omega_1$,
and the Boolean algebra $Clopen\big(K_{\omega_1}\big)$ will have the
Nikodym property. We start with $K_\omega=2^\omega$ and let us
assume that for some $\omega<\delta<\omega_1$ we have already
constructed an initial segment of our system
$\seq{K_\alpha,\pi_\alpha^\beta}{\omega\le\alpha<\beta<\delta}$.
If $\delta$ is a limit ordinal, then simply put
$K_\delta=\varprojlim\seq{K_\alpha}{\omega\le\alpha<\delta}$. If
$\delta=\gamma+1$ for some ordinal $\gamma$, then we proceed as
follows.
\medskip

We put $K_{\gamma+1}=K_\gamma\times\{0\}$ and let $\pi_\gamma^\delta\colon K_\delta\to K_\gamma$ to be simply a restriction of $\pr_\gamma^\delta$, unless all of the following
conditions simultaneously hold:
\begin{itemize}
\item[$(i)$]
For every $n\io$ and every distinct $F,G\in\finsub{\Skin{\gamma}}$
such that $[F]_\gamma\cap K_\gamma=[G]_\gamma\cap K_\gamma$ we have
$f_\gamma(n)(F)=f_\gamma(n)(G)$;
 \item[$(ii)$] For every $n\in\w$ the function
  $\mu^\gamma_n$ is  a measure on the Boolean algebra $Clopen\big(K_\gamma\big)$,
  where $\mu^\gamma_n(U)=f_\gamma(n)(F)$ for some (equivalently any)
  $F\in\finsub{\Skin{\gamma}}$ such that $[F]_\gamma\cap
  K_\gamma=U$; and
    \item[$(iii)$]  $\vec{\mu}_\gamma=\langle \mu^\gamma_n:n\in\w\rangle$ is  an
    anti-Nikodym sequence of measures on $Clopen\big(K_\gamma\big)$.
\end{itemize}
If the three conditions listed above hold, we shall make sure that
$\vec{\mu}_\gamma$ cannot be extended to an
    anti-Nikodym sequence of measures on $Clopen\big(K_{\w_1}\big)$
    by constructing $K_{\gamma+1}$ as described below.
First, notice that $K_\gamma$ is an infinite second countable
compact space with no isolated points, so it is homeomorphic to the
Cantor space $\Cantor$. If $\varphi_\gamma\colon K_\gamma\to\Cantor$
is a homeomorphism and $d$ denotes the standard metric on $\Cantor$,
then let $d_\gamma$ denote the metric on $K_\gamma$ defined as
 $d_\gamma(x,y)=d(\varphi_\gamma(x),\varphi_\gamma(y))$ for every $x,y\in K_\gamma$. This way,
  for fixed $n\io$, the set of preimages
  $F_n=\big\{\varphi_\gamma^{-1}\big[[\sigma]_\omega\big]\colon\ \sigma\in 2^n\big\}$ is a
  decomposition of $K_\gamma$ into $2^n$ disjoint clopen subsets of $d_\gamma$-diameter
  $1/2^n$.

Now, by Lemma \ref{lem:aN_antichain} there are a strictly increasing
sequence $\seqk{n_k\io}$ and an antichain $\seqk{W_k}$ of
clopen subsets of $K_\gamma$ satisfying for every $k\io$ the
following inequality:
\[\tag{1}\big|\mu^\gamma_{n_k}\big(W_k\big)\big|>2^k\cdot k.\]
 Let  $\beta_\gamma$ be the minimal ordinal $\beta<\w_1$ such that
 $F^\beta_k\in\finsub{\Skin{\gamma}}$ for all $k\in\w$, and for
the antichain $\seqk{W_k^\gamma}$ defined for every $k\io$ as $W_k^\gamma=\big[F^\beta_k\big]_\gamma\cap
K_\gamma$ there exists a strictly increasing sequence
$\seqk{n_k\io}$ such that  $(1)$ holds.
 Now let
$\xi_\gamma<\omega_1$ be the minimal ordinal such that the
inequality $(1)$ holds for the sequence
$\seqk{s_{\xi_\gamma}(k)}$ of natural numbers and the antichain
$\seqk{W_k^\gamma}$, i.e.
for every $k\io$ we have:
\[\tag{2}\big|\mu^\gamma_{s_{\xi_\gamma}(k)}\big(W_k^\gamma\big)\big|>2^k\cdot k.\]
By the triangle inequality, for each $k\io$ there is $\sigma_k\in
2^k$ such that
\[\big|\mu^\gamma_{s_{\xi_\gamma}(k)}\Big(W_k^\gamma\cap\varphi_\gamma^{-1}\big[[\sigma_k]_\omega\big]\Big)\big|>k.\]
Set
$V_k^\gamma=W_k^\gamma\cap\varphi_\gamma^{-1}\big[[\sigma_k]_\omega\big]$,
so  $V_k^\gamma$ is a clopen set in $K_\gamma$.
Since
$\diam_{d_\gamma}\big(V_k^\gamma\big)\le1/2^k$ for each $k\io$ and the
set $A_\gamma$ is infinite, there is a subsequence $\seql{k_l}$
contained in $A_\gamma$ for which the clopen sets $V_{k_l}^\gamma$
converge to some point $t_\gamma\in K_\gamma$ as $l\to\infty$. Since
$\vec{\mu}_\gamma$ is pointwise convergent and thus pointwise
bounded, by Lemma \ref{lem:antichain_witness_selection} (applied to
$m_{ij}=\big|\mu^\gamma_{s_{\xi_\gamma}(k_i)}\big(V_{k_j}^\gamma\big)\big|$,
$i,j\io$) and Lemma \ref{lem:antichain_tails}, there is a sequence
$\seqp{l_p}$ such that
\[\tag{3}\Big|\mu^\gamma_{s_{\xi_\gamma}(k_{l_p})}\Big(V_{k_{l_p}}^\gamma\Big)\Big|>\sum_{i=0}^{p-1}\Big|\mu^\gamma_{s_{\xi_\gamma}(k_{l_p})}\Big(V_{k_{l_i}}^\gamma\Big)\Big|+p+1\]
and
\[\tag{4}\sum_{i>p}\Big|\mu^\gamma_{s_{\xi_\gamma}(k_{l_p})}\Big|\Big(V_{k_{l_i}}^\gamma\Big)<1\]
for every $p\io$. Put $A_\gamma'=\big\{k_{l_p}\colon\ p\io\big\}$. (The family of sets $A_\gamma'$'s will be used at the very end of the proof.)

Since $A_\gamma'\sub A_\gamma$ and $A_\gamma$
is an  infinite co-infinite subset of $\omega$, the closed subspace
$K_\delta$ of $2^\delta$ defined as
\[K_\delta=\Big(\bigcup_{p\io}V_{k_{l_p}}^\gamma\cup\big\{t_\gamma\big\}\Big)\times\{0\}\ \cup\ \Big(K_\gamma\sm\bigcup_{p\io}V_{k_{l_p}}^\gamma\Big)\times\{1\},\]
together with the naturally defined projection
$\pi_\gamma^\delta\colon K_\delta\to K_\gamma$  (being simply a
restriction of $\pr_\gamma^\delta$), is a simple extension of
$K_\gamma$ (see Lemma \ref{lem:simple_ext_convergent_antichain}). Note that
$\Big(\bigcup_{p\io}V_{k_{l_p}}^\gamma\cup\big\{t_\gamma\big\}\Big)\times\{0\}$
is a clopen subset of $K_\delta$. For $\alpha\in[\omega,\gamma)$ we define the mappings $\pi_\alpha^\delta\colon K_\delta\to K_\alpha$ by putting $\pi_\alpha^\delta=\pi_\alpha^\gamma\circ\pi_\gamma^\delta$. The $\delta$th-step is finished.

\medskip

We proceed in the above manner until we obtain the  limit
$K_{\omega_1}=\varprojlim\seq{K_\alpha}{\omega\le\alpha<\omega_1}$.
It follows directly from the construction that the inverse system
$\seq{K_\alpha}{\omega\le\alpha\le\omega_1}$ is based on simple
extensions and thus the Boolean algebra
$\aA=Clopen\big(K_{\omega_1}\big)$ is minimally generated.  We need
to show that $\aA$ has the Nikodym property, so, for the sake of
contradiction, let us assume that there exists an anti-Nikodym
sequence $\seqn{\mu_n}$ of measures on $\aA$.

\medskip

Again by virtue of Lemma \ref{lem:aN_antichain}, let
$\beta<\omega_1$ be such a minimal ordinal that for the antichain
$\seqk{W_k}$ of clopen subsets of $K_{\omega_1}$ defined for every
$k\io$ as $W_k=\big[F^\beta_k\big]_{\w_1}\cap K_{\omega_1}$ there exists an
increasing sequence $\seqk{n_k}$ satisfying for every $k\io$ the
following condition:
\[\tag{5}\big|\mu_{n_k}\big(W_k\big)\big|>2^k\cdot k.\]
Let $\xi<\omega_1$ be the minimal ordinal number such that  the
above inequality works for the subsequence $\seqk{\mu_{s_\xi(k)}\big(W_k\big)}$. Let
also $\zeta<\w_1$ be such that $F^\beta_k\in\finsub{\Skin{\zeta}}$ for
all $k\in\w$. 

For every $n\io$ and $F\in\finsub{\Skin{\w_1}}$ put
\[f(n)(F)=\mu_n\big([F]_{\omega_1}\cap K_{\omega_1}\big).\]
Since each $\mu_n$ is a finite measure, the sequence
$\vec{f}=\seqn{f(n)}$ belongs  to the set
$\big(\mathbb{R}^{[\Skin{\w_1}]^{<\w}}\big)^\omega$. For every
$n\io$ let $D_n$ be a club set in $\omega_1$ coming from Lemma
\ref{lem:var_det_club} for the function $f(n)$. Put
$D=\bigcap_{n\io}D_n$, so $D$ is a club in $\omega_1$. By the
$\Diamond$-properties of the sequence
$\seq{f_\alpha}{\omega\le\alpha<\omega_1}$, coming from Lemma
\ref{lem:diamond_clopens}, the set
\[\Gamma=\Big\{\gamma\in[\zeta,\omega_1)\cap D\colon\ (\forall n\io)\big(f(n)\rstr\finsub{\Skin{\gamma}}=f_\gamma(n)\big)\Big\}\]
is stationary in $\omega_1$. It follows that for all
$\gamma\in\Gamma$ and $n\io$ the function $\mu^\gamma_n$ defined in
$(ii)$ of the $(\gamma+1)$-th step of the construction (i.e., $\mu^\gamma_n\big([F]_\gamma\cap
K_\gamma\big)=f_\gamma(n)(F)$ for every $F\in
\finsub{\Skin{\gamma}}$)
  is a well-defined measure on
$Clopen\big(K_\gamma\big)$, since for every $F\in\finsub{\Skin{\gamma}}$ we have
\[\tag{7}\mu^\gamma_n\big([F]_\gamma\cap K_\gamma\big)=f_\gamma(n)(F)=f(n)(F)=\mu_n\big([F]_{\omega_1}\cap K_{\omega_1}\big).\]
Moreover, by the properties of the set $D$ (and  thus of $\Gamma$)
and the definition of the total variations of elements of
$\mathbb{R}^{[\Skin{\alpha}]^{<\w}}$ ($\alpha\le\omega_1$), for
every $\gamma\in\Gamma$, $n\io$ and $F\in\finsub{\Skin{\gamma}}$ we
have
\[\tag{8}\big|\mu^\gamma_n\big|\big([F]_\gamma\cap K_\gamma\big)=\big|f_\gamma(n)\big|_\gamma(F)=\]
\[=\big|f(n)\rstr\finsub{\Skin{\gamma}}\big|_\gamma(F)=|f(n)|_{\w_1}(F)=\big|\mu_n\big|\big([F]_{\omega_1}\cap K_{\omega_1}\big).\]
Note that for every $\gamma\in\Gamma$ and
$F\in\finsub{\Skin{\gamma}}$ we have
\[\big(\pr^{\w_1}_\gamma\big)^{-1}\big[[F]_\gamma\cap K_\gamma\big]=[F]_{\w_1}\cap
K_{\w_1},\]
and thus $(7)$ and $(8)$ yield
\[\tag{9} \mu^\gamma_n (V)=\mu_n \Big(\big(\pi^{\w_1}_\gamma\big)^{-1}[V]\Big)
\quad\text{and}\quad\big|\mu^\gamma_n\big|
(V)=\big|\mu_n\big|\Big(\big(\pi^{\w_1}_\gamma\big)^{-1}[V]\Big)\]
for all clopen $V\sub K_\gamma$ and $n\io$.

Let $\gamma\in\Gamma$. The sequence $\seqn{\mu^\gamma_n}$ is an
anti-Nikodym sequence  of measures on $Clopen(K_\gamma)$. Indeed,
the pointwise boundedness of $\seqn{\mu^\gamma_n}$ follows from (7),
and (5) implies the lack of the uniform boundedness (together with
the fact that $\gamma\ge\zeta$). Thus, $K_{\gamma+1}\neq
K_\gamma\times\{0\}$, i.e. at the $(\gamma+1)$-th step we conducted
the procedure of ``killing'' the anti-Nikodym sequence
$\seqn{\mu_n^\gamma}$. To do this, we used the antichain
$\seqk{W^\gamma_k}$, where $W^\gamma_k=\big[F^{\beta_\gamma}_k\big]_\gamma\cap K_\gamma$ for each $k\io$, and the
subsequence $\seqk{\mu^\gamma_{s_{\xi_\gamma}(k)}}$ such that $(2)$
holds. Moreover, $\beta_\gamma$ and afterwards $\xi_\gamma$ were
chosen to be minimal possible. Combining $\gamma\geq\zeta$,
$\big\{F^\beta_k\colon\ k\io\big\}\sub\finsub{\Skin{\zeta}}$, $(2)$, $(5)$, $(7)$,
as well as the fact that $\beta$ and then $\xi$ were chosen as the
minimal ordinals witnessing that $\seqn{\mu_n}$ is anti-Nikodym the way described in $(5)$, we conclude that
 $\beta=\beta_\gamma$ and $\xi=\xi_\gamma$.

Let again $\gamma\in\Gamma$. In what follows we shall use the notation from the description of the $(\gamma+1)$-th step of the construction, i.e. from the description of the space
$K_{\gamma+1}$. 
For every $p\io$, by $(3)$, $(4)$ and $(9)$ we have:
\[\tag{10}\big|\mu_{s_{\xi}(k_{l_p})}\Big(\big(\pi^{\w_1}_\gamma\big)^{-1}\Big[V_{k_{l_p}}^\gamma\Big]\Big)\big|>\sum_{i=0}^{p-1}\big|\mu_{s_{\xi}(k_{l_p})}\Big(\big(\pi^{\w_1}_\gamma\big)^{-1}\Big[V_{k_{l_i}}^\gamma\Big]\Big)\big|+p+1\]
and
\[\tag{11}\sum_{i>p}\Big|\mu_{s_{\xi}(k_{l_p})}\Big|\Big(\big(\pi^{\w_1}_\gamma\big)^{-1}\Big[V_{k_{l_i}}^\gamma\Big]\Big)<1.\]
Set $s_\gamma=\{\langle\gamma,0\rangle\}\in\Skin{\gamma+1}$, i.e. $\dom(s_\gamma)=\{\gamma\}$ and $\ran(s_\gamma)=\{0\}$, and note that for $F_\gamma=\big\{s_\gamma\big\}$ we have
\[\big[F_\gamma\big]_{\gamma+1}\cap K_{\gamma+1}=\Big(\bigcup_{p\io}V_{k_{l_p}}^\gamma\cup\big\{t_\gamma\big\}\Big)\times\{0\}\]
by the definition of $K_{\gamma+1}$.  Therefore, for every $p\io$ we
get
\[\big|\mu_{s_\xi(k_{l_p})}\big(\big[F_\gamma\big]_{\omega_1}\cap K_{\omega_1}\big)\big|=\big|\mu_{s_\xi(k_{l_p})}\Big(\big(\pi_{\gamma+1}^{\omega_1}\big)^{-1}\big[\big[F_\gamma\big]_{\gamma+1}\cap K_{\gamma+1}\big]\Big)\big|=\]

\smallskip

\[=\big|\mu_{s_\xi(k_{l_p})}\bigg(\big(\pi_{\gamma+1}^{\omega_1}\big)^{-1}\big[\Big(\bigcup_{i\io}V_{k_{l_i}}^\gamma\cup\big\{t_\gamma\big\}\Big)\times\{0\}\big]\bigg)\big|=\]

\smallskip

\[=\big|\mu_{s_\xi(k_{l_p})}\bigg(\big(\pi_{\gamma+1}^{\omega_1}\big)^{-1}\big[\Big(\bigcup_{i<p}V_{k_{l_i}}^\gamma\Big)\times\{0\}\big]\bigg)+\mu_{s_\xi(k_{l_p})}\Big(\big(\pi_{\gamma+1}^{\omega_1}\big)^{-1}\Big[V_{k_{l_p}}^\gamma\times\{0\}\Big]\Big)+\]
\[+\ \mu_{s_\xi(k_{l_p})}\bigg(\big(\pi_{\gamma+1}^{\omega_1}\big)^{-1}\big[\Big(\bigcup_{i>p}V_{k_{l_i}}^\gamma\cup\big\{t_\gamma\big\}\Big)\times\{0\}\big]\bigg)\big|=\]

\smallskip

\[=\big|\mu_{s_\xi(k_{l_p})}\Big(\big(\pi_{\gamma}^{\omega_1}\big)^{-1}\Big[\bigcup_{i<p}V_{k_{l_i}}^\gamma\Big]\Big)+\mu_{s_\xi(k_{l_p})}\Big(\big(\pi_{\gamma}^{\omega_1}\big)^{-1}\Big[V_{k_{l_p}}^\gamma\Big]\Big)+\]
\[+\ \mu_{s_\xi(k_{l_p})}\bigg(\big(\pi_{\gamma+1}^{\omega_1}\big)^{-1}\big[\Big(\bigcup_{i>p}V_{k_{l_i}}^\gamma\cup\big\{t_\gamma\big\}\Big)\times\{0\}\big]\bigg)\big|\ge\]

\smallskip

\[\ge\big|\mu_{s_\xi(k_{l_p})}\Big(\big(\pi^{\w_1}_\gamma\big)^{-1}\Big[V_{k_{l_p}}^\gamma\Big]\Big)\big|-\big|\mu_{s_\xi(k_{l_p})}\Big(\bigcup_{i<p}\big(\pi^{\w_1}_\gamma\big)^{-1}\Big[V_{k_{l_i}}^\gamma\Big]\Big)\big|-\]
\[-\big|\sum_{i>p}\mu_{s_\xi(k_{l_p})}\Big(\big(\pi^{\w_1}_\gamma\big)^{-1}\Big[V_{k_{l_i}}^\gamma\Big]\Big)\big|-
\big|\wh{\mu}_{s_\xi(k_{l_p})}
\Big(\big(\pi_{\gamma+1}^{\omega_1}\big)^{-1}\big(t_\gamma\concat 0
\big) \Big) \big|\ge\]

\smallskip

\[\ge\big|\mu_{s_\xi(k_{l_p})}\Big(\big(\pi^{\w_1}_\gamma\big)^{-1}\Big[V_{k_{l_p}}^\gamma\Big]\Big)\big|-\sum_{i<p}\big|\mu_{s_\xi(k_{l_p})}\Big(\big(\pi^{\w_1}_\gamma\big)^{-1}\Big[V_{k_{l_i}}^\gamma\Big]\Big)\big|-\]
\[-\sum_{i>p}\Big|\mu_{s_\xi(k_{l_p})}\Big|\Big(\big(\pi^{\w_1}_\gamma\big)^{-1}\Big[V_{k_{l_i}}^\gamma\Big]\Big)-
\big|\wh{\mu}_{s_\xi(k_{l_p})}
\Big(\big(\pi_{\gamma+1}^{\omega_1}\big)^{-1}\big(t_\gamma\concat 0
\big) \Big) \big|>\]

%
%
%

\smallskip

\[\tag{12}>p-\big|\wh{\mu}_{s_\xi(k_{l_p})}
\Big(\big(\pi_{\gamma+1}^{\omega_1}\big)^{-1}\big(t_\gamma\concat0
\big) \Big) \big|,\]
\noindent where the last inequality follows from $(10)$ and $(11)$. Note that by Lemma \ref{lem:simple_ext_empty_int} it holds that
\[\tag{13}\big(\pi_{\gamma+1}^{\omega_1}\big)^{-1}\big(t_\gamma\concat0
\big)\sub\partial_{K_{\w_1}}\Big(\bigcup_{p\in\w}\big(\pi^{\w_1}_\gamma\big)^{-1}\Big[V_{k_{l_p}}^\gamma\Big]\Big).\]

Since for every $\gamma\in\Gamma$ and $p\io$ we have $\big(\pi^{\w_1}_\gamma\big)^{-1}\Big[V_{k_{l_p}}^\gamma\Big]\sub W_{k_{l_p}}$ and
\[\bigvee\Big\{\big(\pi^{\w_1}_\gamma\big)^{-1}\big[V_i^\gamma\big]\colon i\in A_\gamma'\Big\}\]
exists in $Clopen\big(K_{\omega_1}\big)$ (it is equal to $\big[F_\gamma\big]_{\omega_1}\cap K_{\omega_1}$), by Lemma \ref{lem:measures_ad_families} there is $\eta<\omega_1$ such that for every $\gamma\in\Gamma$, $\gamma\ge\eta$, and $p\io$ we have:

\[\tag{14}\big|\wh{\mu}_{s_\xi(k_{l_p})}\big|\big(
\partial_{K_{\w_1}}\Big(\bigcup_{p\in\w}\big(\pi^{\w_1}_\gamma\big)^{-1}\Big[V_{k_{l_p}}^\gamma\Big]\Big) \big)=0,\]
which together with $(12)$ and $(13)$ yields that
 \[ \big|\mu_{s_\xi(k_{l_p})}\big(
\big[F_\gamma\big]_{\w_1}\cap K_{\w_1}\big)\big|>p  \]
for all $p\io$ and $\gamma\in\Gamma$ such that $\gamma\ge\eta$, thus
contradicting the pointwise boundedness of $\seqn{\mu_n}$. It follows that $\aA$ admits no anti-Nikodym sequences and hence has the Nikodym
property.
\end{proof}

\section{Consequences\label{sec:consequences}}

In this section we provide several consequences of Theorem \ref{thm:min_gen_nik}. They mostly serve to distinguish the Nikodym property from the Grothendieck property, especially from the point of view of measure-theoretic aspects of the Stone spaces of Boolean algebras having either of the two properties. In particular, their overall sense is that Boolean algebras with the Nikodym property may be \textit{less intricate} than Boolean algebras with the Grothendieck property.

\subsection{Uniform regular measures}

Babiker \cite{Bab77} introduced the following class of \textit{simple} probability measures on compact spaces.

\begin{definition}\label{def:unif_reg_meas}
A probability measure $\mu$ on a compact space $K$ is \textit{uniformly regular} if there exists a countable family $\cC$ of zero subsets of $K$ such that for every open subset $U$ of $K$ and every $\eps>0$ there is $F\in\cC$ such that $F\sub U$ and $\mu(U\sm F)<\eps$.
\end{definition}

The uniform regularity of measures was studied e.g. by Pol \cite{Pol82}, Mercourakis \cite{Mer96}, Borodulin-Nadzieja \cite{PBN07}, Krupski and Plebanek \cite{KP11}. In particular, Mercourakis \cite[Corollary 2.8]{Mer96} proved that every uniformly regular measure $\mu$ on a compact space $K$ admits a $\mu$-uniformly distributed sequence.

\begin{definition}\label{def:unif_distr_seq}
A probability measure $\mu$ on a compact space $K$ \textit{admits a uniformly distributed sequence} $\seqn{x_n\in K}$ if $\frac{1}{n}\sum_{i=0}^{n-1}\delta_{x_i}$ converges weakly* to $\mu$.
\end{definition}

D\v{z}amonja and Plebanek \cite[Lemma 4.1]{DP07} showed that if an inverse system of simple extensions has length at most $\omega_1$, then every non-atomic measure on its limit is uniformly regular. Their result together with Theorem \ref{thm:min_gen_nik} yields immediately the following corollary.

\begin{corollary}\label{cor:nik_unif_reg_meas}
Assuming $\Diamond$, there exists an infinite (minimally generated) Boolean algebra $\aA$ with the Nikodym property and such that every non-atomic probability measure on $St(\aA)$ is uniformly regular. In particular, every non-atomic probabiliy measure $\mu$ on $St(\aA)$ admits a $\mu$-uniformly distributed sequence.
\end{corollary}

Borodulin-Nadzieja \cite[Theorems 4.6 and 4.9]{PBN07} proved that the Stone space of a minimally generated Boolean algebra always carries a uniformly regular probability measure. On the other hand, we proved in \cite[Propositions 9.7 and 9.8]{KSZ20} that the existence of a uniformly regular probability measure on a compact space $K$ implies that $K$ does not have the Grothendieck property (since the space $C_p(K)$ has then the so-called \textit{Josefson--Nissenzweig property} which is equivalent for $K$ to fail an even weaker variant of the Grothendieck property, called \textit{the $\ell_1$-Grothendieck property}, see \cite[Theorem 6.7]{KSZ20}). Thus, Corollary \ref{cor:nik_unif_reg_meas} shows that the Nikodym property may be consistently different in this aspect from the Grothendieck property.

Note that the results discussed in the previous paragraph yield altogether that no minimally generated Boolean algebra may have the Grothendieck property (cf. \cite[Corollary 9.11]{KSZ20}).

\subsection{Maharam type}

Uniformly regular measures are a special case of measures having \textit{countable Maharam type},  defined as follows.

\begin{definition}\label{def:maharam_type}
\textit{The Maharam type} of a probability measure $\mu$ on a compact space $K$ is the minimal cardinality of a family $\cC$ of Borel subsets of $K$ such that for every Borel subset $B$ of $K$ and $\eps>0$ there exists $C\in\cC$ such that $\mu(B\triangle C)<\eps$.
\end{definition}

Equivalently, the Maharam type of a probability measure $\mu$ is the density of the Banach space $L_1(\mu)$ of all $\mu$-integrable real-valued functions on $K$. For more information on the topic, see Maharam \cite{Mah42}, Fremlin \cite{Fre89}, or Plebanek and Sobota \cite{PS14}. Note that measures of countable Maharam type are also called \textit{separable}.

Borodulin-Nadzieja \cite[Theorem 4.9]{PBN07} proved that the limit of an inverse system based on minimal extensions of compact spaces carries only measures of countable type, hence we immediately obtain the following corollary.

\begin{corollary}\label{cor:nik_sep_meas}
Assuming $\Diamond$, there exists an infinite Boolean algebra $\aA$ with the Nikodym property and such that every probability measure on $St(\aA)$ has countable Maharam type.
\end{corollary}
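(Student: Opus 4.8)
The plan is to simply combine Theorem~\ref{thm:min_gen_nik} with the structural results on minimally generated Boolean algebras recalled above. First I would invoke Theorem~\ref{thm:min_gen_nik} to obtain, under $\Diamond$, a minimally generated Boolean algebra $\aA$ having the Nikodym property. Note that $\aA$ is automatically infinite, since by Definition~\ref{def:min_gen_ba} its bottom subalgebra $\aA_0$ is isomorphic to the free countable Boolean algebra $Fr(\omega)$, which already embeds into $\aA$.

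Next I would use the duality between minimally generated Boolean algebras and inverse systems based on simple extensions (Definitions~\ref{def:min_gen_ba} and~\ref{def:inv_sys_simp_ext}, cf.~\cite[Lemma 3.3]{PBN07}): since $\aA$ is minimally generated, its Stone space $St(\aA)$ is (homeomorphic to) the limit of an inverse system of totally disconnected compact spaces based on simple extensions. In the construction of Theorem~\ref{thm:min_gen_nik} this is in fact explicit, as $St(\aA)=K_{\omega_1}=\varprojlim\seq{K_\alpha}{\omega\le\alpha<\omega_1}$ with each step a simple extension.

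Finally I would apply Borodulin-Nadzieja's theorem \cite[Theorem 4.9]{PBN07}, according to which the limit of any inverse system based on minimal (simple) extensions of compact spaces carries only probability measures of countable Maharam type. Applying this to $St(\aA)$ yields that every probability measure on $St(\aA)$ has countable Maharam type, which together with the Nikodym property of $\aA$ completes the proof.

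There is essentially no obstacle here: the statement is a direct juxtaposition of Theorem~\ref{thm:min_gen_nik} and the cited theorem of Borodulin-Nadzieja, the only point requiring a word of care being the (routine) verification that the algebra produced in Theorem~\ref{thm:min_gen_nik} is indeed minimally generated and infinite, both of which are immediate from its construction.
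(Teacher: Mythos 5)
Your proposal is correct and follows exactly the paper's route: the corollary is obtained by combining Theorem~\ref{thm:min_gen_nik} with Borodulin-Nadzieja's result \cite[Theorem 4.9]{PBN07} that limits of inverse systems based on simple (minimal) extensions carry only measures of countable Maharam type. The additional remarks on infiniteness and the duality with inverse systems are routine and consistent with the construction in the paper.
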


Corollary \ref{cor:nik_sep_meas} yields that carrying only measures of countable Maharam type is another feature that differs the Grothendieck property from the Nikodym property, since Krupski and Plebanek \cite[page 2189]{KP11} showed that if a Boolean algebra has the Grothendieck property, then its Stone space carries a measure of uncountable type (more precisely, there exists a measure of type not smaller than the so-called \textit{pseudo-interesection number} $\frakp$).

\subsection{Independent families}

Let us recall the following standard definition.

\begin{definition}\label{def:indep_families}
Let $\aA$ be a Boolean algebra. A family $\fF$ of elements of $\aA$ is \textit{independent} if and only if for every disjoint finite subsets $F,G\sub\fF$ we have:
\[\bigwedge_{A\in F}A\wedge\bigwedge_{A\in G}A^c\neq 0.\]
\end{definition}

It is an easy fact that every $\sigma$-complete Boolean algebra contains an independent family of size $\frakc$. Haydon \cite{Hay81} provided an argument (due to Argyros) proving that Boolean algebras with \textit{the Subsequential Completeness Property} (SCP), being a weakening of the $\sigma$-completeness, always contain uncountable independent families (in fact, of size at least $\frakp$). This result was later generalized by Koszmider and Shelah \cite{KS12} who showed that Boolean algebras having a very weak form of completeness called \textit{the Weak Subsequential Separation Property} (WSSP) must necessarily contain independent families of size $\frakc$. Both the $\sigma$-completeness and Haydon's SCP imply the WSSP as well as the Grothendieck property and the Nikodym property, however note that there exists a Boolean algebra with the WSSP, but with neither the Grothendieck property nor the Nikodym property.

Under the assumption of the Continuum Hypothesis, Talagrand \cite{Tal80} constructed a Boolean algebra $\aA$ with the Grothendieck property and such that every independent family in $\aA$ is at most countable. A natural question hence arises whether a similar example but with the Nikodym property instead of the Grothendieck property may be obtained. It appears that the Boolean algebra in Theorem \ref{thm:min_gen_nik} is such an example, since, by the result of Borodulin-Nadzieja \cite[Corollary 4.10]{PBN07}, no minimally generated Boolean algebra may contain an uncountable independent family.

\begin{corollary}\label{cor:nik_sep_meas}
Assuming $\Diamond$, there exists an infinite  Boolean algebra $\aA$ with the Nikodym property and such that every independent family $\fF\sub\aA$ is at most countable.
\end{corollary}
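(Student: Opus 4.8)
The plan is to reduce this corollary directly to Theorem \ref{thm:min_gen_nik} together with a cited structural fact about minimally generated Boolean algebras, rather than to prove anything substantive from scratch. First I would invoke Theorem \ref{thm:min_gen_nik}: under $\Diamond$ there exists a minimally generated Boolean algebra $\aA$ with the Nikodym property, and this $\aA$ is infinite (its Stone space is the limit $K_{\omega_1}$ of an $\omega_1$-indexed inverse system of simple extensions starting from $2^\omega$, hence $\aA$ contains the free countable algebra $Fr(\omega)$ as a subalgebra and is therefore infinite). This is the algebra witnessing the corollary.

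Next I would appeal to the result of Borodulin-Nadzieja \cite[Corollary 4.10]{PBN07}, stated in the paragraph just before the corollary, which asserts that no minimally generated Boolean algebra contains an uncountable independent family. Applying this to our $\aA$, every independent family $\fF\sub\aA$ must be at most countable. Combining the two observations gives exactly the statement: $\aA$ is an infinite Boolean algebra with the Nikodym property in which every independent family is at most countable. That completes the proof.

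There is no real obstacle here, since both ingredients are available off the shelf: the only mild point to be careful about is that Definition \ref{def:min_gen_ba} in this paper allows $\aA_0\cong Fr(\omega)$ rather than $\aA_0=\{0,1\}$, whereas \cite{PBN07} uses the latter convention (as the parenthetical remark after Definition \ref{def:min_gen_ba} notes). However, an algebra that is minimally generated in the sense of \cite{PBN07} has its free countable part $Fr(\omega)$ itself built up as $\omega$ many minimal extensions of $\{0,1\}$, and conversely $Fr(\omega)$ is minimally generated over $\{0,1\}$; so the two notions of ``minimally generated'' coincide, and Borodulin-Nadzieja's corollary applies verbatim to the algebra of Theorem \ref{thm:min_gen_nik}. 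One could also simply note that $Fr(\omega)$ is countable and any independent family inside it is trivially countable, while the argument of \cite{PBN07} bounds independent families living anywhere in a minimally generated algebra. Hence no additional work is needed.

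For completeness I would phrase the proof in two sentences: by Theorem \ref{thm:min_gen_nik} there is, under $\Diamond$, an infinite minimally generated Boolean algebra $\aA$ with the Nikodym property; by \cite[Corollary 4.10]{PBN07} every independent family in a minimally generated Boolean algebra is at most countable, so the same holds for $\aA$, which is therefore as required.
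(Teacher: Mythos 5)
Your proposal is correct and follows exactly the paper's own route: the algebra of Theorem \ref{thm:min_gen_nik} is the witness, and the result of Borodulin-Nadzieja \cite[Corollary 4.10]{PBN07} (quoted in the paragraph preceding the corollary) rules out uncountable independent families in any minimally generated Boolean algebra. Your remark about reconciling the two conventions for $\aA_0$ is a sensible extra precaution, already implicitly covered by the parenthetical note after Definition \ref{def:min_gen_ba}.
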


Note that several consistent examples of Boolean algebras with the Nikodym or Grothendieck property of size strictly less than $\frakc$ have been obtained, e.g. by Brech \cite{Bre06}, Sobota \cite{Sob19}, Sobota and Zdomskyy \cite{SZ19}; those algebras cannot of course contain independent families of size $\frakc$ and thus are far from being $\sigma$-complete.


\section{The second example}

Before we provide a consistent example of a minimally generated Boolean algebra whose Stone space is Efimov but the algebra itself does not have the Nikodym property, we need to establish an ambient family of Borel subsets of $\Cantor$ in which the construction will be conducted.

Let $\lambda$ denote the standard complete product measure on $\Cantor$. For each $n\io$ and $i\in\{0,1\}$ put $S_n^i=\{x\in\Cantor\colon\ x(n)=i\}$. Define the ambient family $\mathbb{A}$ as follows\footnote{The authors would like to thank Piotr Borodulin-Nadzieja and Omar Selim for bringing the sets $\mathbb{A}$ and $\mathbb{B}$ and Lemmas \ref{lem:ambient_not_nikodym} and \ref{lem:ambient_algebra} to their attention.}:

\[\mathbb{A}=\big\{A\in Bor(\Cantor)\colon\ \lim_{n\to\infty}n\big(\lambda\big(S_n^0\cap A\big)-\lambda\big(S_n^1\cap A\big)\big)=0\big\}.\]

$\mathbb{A}$ is not a Boolean algebra, but it of course contains $Clopen(\Cantor)$. The following immediate observation will be crucial for the proof of Theorem \ref{thm:min_gen_no_nik}.

\begin{lemma}\label{lem:ambient_not_nikodym}
Let $\aA$ be a Boolean subalgebra of $Bor(\Cantor)$ such that $Clopen(\Cantor)\sub\aA\sub\mathbb{A}$. Then, $\aA$ does not have the Nikodym property.
\end{lemma}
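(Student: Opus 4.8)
The plan is to construct an explicit anti-Nikodym sequence of measures on $\aA$ using the sets $S_n^0$ and $S_n^1$ that appear in the definition of $\mathbb{A}$. For each $n\io$ define a signed measure $\mu_n$ on $Bor(\Cantor)$ by
\[
\mu_n = n\big(\lambda\rstr S_n^0 - \lambda\rstr S_n^1\big),
\]
that is, $\mu_n(B) = n\big(\lambda(S_n^0\cap B) - \lambda(S_n^1\cap B)\big)$ for every Borel $B\sub\Cantor$; restrict each $\mu_n$ to $\aA$. First I would check that $\seqn{\mu_n}$ is pointwise bounded on $\aA$: for every $A\iA$ we have $A\in\mathbb{A}$ by hypothesis, so by definition of $\mathbb{A}$ the sequence $\mu_n(A) = n\big(\lambda(S_n^0\cap A)-\lambda(S_n^1\cap A)\big)$ converges to $0$, hence in particular $\sup_{n\io}|\mu_n(A)|<\infty$. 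Next I would check that $\seqn{\mu_n}$ is not uniformly bounded: since $S_n^0\iA$ (it is clopen, hence in $Clopen(\Cantor)\sub\aA$), and $S_n^0\cap S_n^1 = \emptyset$ with $S_n^0\cup S_n^1 = \Cantor$, we get $\lambda(S_n^0\cap S_n^0) = \lambda(S_n^0) = 1/2$ and $\lambda(S_n^1\cap S_n^0) = 0$, so $\mu_n(S_n^0) = n/2$. Therefore $\|\mu_n\|\ge |\mu_n(S_n^0)| = n/2\to\infty$, which shows $\sup_{n\io}\|\mu_n\|=\infty$.

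Putting these two observations together, $\seqn{\mu_n\rstr\aA}$ is an anti-Nikodym sequence of measures on $\aA$, so by the remark following the definition of anti-Nikodym sequences (a Boolean algebra has the Nikodym property if and only if it admits no anti-Nikodym sequence), $\aA$ does not have the Nikodym property. The only minor points to verify carefully are that each $\mu_n$ is indeed a finite signed measure on $\aA$ (clear, since it is a bounded combination of restrictions of the finite measure $\lambda$, with $\|\mu_n\|\le 2n$) and that the measures genuinely live on the subalgebra $\aA$ rather than needing the full Borel $\sigma$-algebra — but this is automatic since restriction of a finitely additive signed measure to a subalgebra is again a finitely additive signed measure.

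I do not expect any real obstacle here; the statement is essentially immediate once one reads off the defining condition of $\mathbb{A}$ as exactly the assertion of pointwise convergence to $0$ of the natural witnessing sequence, while the failure of uniform boundedness is visible already on the clopen sets $S_n^0$. The slightly delicate conceptual point — if any — is recognizing that the same sequence of measures which is \emph{tamed} on $\mathbb{A}$ (converging pointwise to $0$) is nevertheless \emph{wild} in norm, which is precisely what makes $\mathbb{A}$ the right ambient family: any algebra squeezed between $Clopen(\Cantor)$ and $\mathbb{A}$ inherits the bad sequence, so $\mathbb{A}$ functions as a ceiling that obstructs the Nikodym property for every admissible choice of algebra in Theorem \ref{thm:min_gen_no_nik}.
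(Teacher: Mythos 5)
Your proposal is correct and is essentially the paper's own argument: both use the sequence $\mu_n(A)=n\big(\lambda(S_n^0\cap A)-\lambda(S_n^1\cap A)\big)$, noting that membership in $\mathbb{A}$ gives pointwise convergence to $0$ (hence pointwise boundedness), while evaluation on the clopen sets $S_n^0\in Clopen(2^\omega)\sub\aA$ shows the norms blow up. The only cosmetic difference is that you bound $\|\mu_n\|\ge n/2$ via $\mu_n(S_n^0)$, whereas the paper records $\|\mu_n\|=n$; either suffices.
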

\begin{proof}
The sequence $\seqn{\mu_n}$ of measures on $\aA$ defined for each $n\io$ and $A\in\aA$ by the formula:
\[\mu_n(A)=n\big(\lambda\big(S_n^0\cap A\big)-\lambda\big(S_n^1\cap A\big)\big)\]
is an anti-Nikodym sequence. Indeed, for every $n\io$ we have $\big\|\mu_n\big\|=n$ and for every $A\in\aA$ and almost all $n\io$ it holds $\big|\mu_n(A)\big|<1$.
\end{proof}

For each $n\io$ put $\fF_n=\big\{[F]_\w\colon\ F\sub 2^n\big\}$, i.e. $\fF_n$ denotes the Boolean algebra of clopen subsets of $\Cantor$ generated by all the sets of the form $[\sigma]_\w$, where $\sigma\in 2^n$, and for every $A\in Bor(\Cantor)$ let:
\[\chi_n(A)=\min\big\{\lambda(A\triangle B)\colon\ B\in\fF_n\big\},\]
i.e. $\chi_n(A)$ measures how far a set $A$ from being a clopen generated by $2^n$ is. Let us also define:
\[\mathbb{B}=\big\{A\in Bor(\Cantor)\colon\ \lim_{n\to\infty}n\cdot\chi_n(A)=0\big\}.\]
The following lemma states basic properties of the family $\mathbb{B}$.

\begin{lemma}\label{lem:ambient_algebra}
$\mathbb{B}$ is a Boolean subalgebra of $Bor(\Cantor)$ and $Clopen(\Cantor)\sub\mathbb{B}\sub\mathbb{A}$.
\end{lemma}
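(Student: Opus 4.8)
The plan is to verify the three assertions in turn: that $\mathbb{B}$ is closed under complements, that it is closed under finite unions (these two together with nonemptiness make it a subalgebra of $Bor(\Cantor)$), and then the two inclusions $Clopen(\Cantor)\sub\mathbb{B}$ and $\mathbb{B}\sub\mathbb{A}$. The key bookkeeping device is that each $\fF_n$ is a finite Boolean algebra, closed under complement and union, and that $\chi_n(A)$ is the $\lambda$-distance from $A$ to the finite set $\fF_n$; so $\chi_n$ behaves subadditively with respect to Boolean operations up to the obvious factors.

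First I would handle complements: if $B\in\fF_n$ realizes the minimum for $A$, then $B^c\in\fF_n$ and $\lambda(A^c\triangle B^c)=\lambda(A\triangle B)=\chi_n(A)$, so $\chi_n(A^c)\le\chi_n(A)$; by symmetry $\chi_n(A^c)=\chi_n(A)$, hence $n\cdot\chi_n(A^c)\to 0$ whenever $n\cdot\chi_n(A)\to 0$, and $\mathbb{B}$ is closed under complements. For unions, fix $A_1,A_2\in\mathbb{B}$ and pick $B_i\in\fF_n$ with $\lambda(A_i\triangle B_i)=\chi_n(A_i)$; then $B_1\cup B_2\in\fF_n$ and the elementary set-theoretic inclusion $(A_1\cup A_2)\triangle(B_1\cup B_2)\sub(A_1\triangle B_1)\cup(A_2\triangle B_2)$ gives $\chi_n(A_1\cup A_2)\le\chi_n(A_1)+\chi_n(A_2)$, so $n\cdot\chi_n(A_1\cup A_2)\le n\cdot\chi_n(A_1)+n\cdot\chi_n(A_2)\to 0$. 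Since $\emptyset\in\fF_n$ for all $n$ gives $\emptyset\in\mathbb{B}$, this shows $\mathbb{B}$ is a Boolean subalgebra of $Bor(\Cantor)$.

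Next, $Clopen(\Cantor)\sub\mathbb{B}$: every clopen $A\sub\Cantor$ has the form $[F]_\w$ for some finite $F\sub 2^m$, and for all $n\ge m$ we have $A\in\fF_n$, so $\chi_n(A)=0$ and trivially $n\cdot\chi_n(A)\to 0$. Finally, for $\mathbb{B}\sub\mathbb{A}$, take $A\in\mathbb{B}$ and choose $B_n\in\fF_n$ with $\lambda(A\triangle B_n)=\chi_n(A)$. The point is that any $B\in\fF_n$ is a union of cylinders $[\sigma]_\w$ with $\sigma\in 2^n$, and for each such $\sigma$ and each $n$ the two halves $S_n^0\cap[\sigma]_\w$ and $S_n^1\cap[\sigma]_\w$ have equal $\lambda$-measure; hence $\lambda(S_n^0\cap B_n)=\lambda(S_n^1\cap B_n)$, i.e. $\mu_n(B_n)=0$ in the notation of Lemma \ref{lem:ambient_not_nikodym}. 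Therefore
\[\big|\lambda(S_n^0\cap A)-\lambda(S_n^1\cap A)\big|=\big|\mu_n(A)/n\big|,\]
and since $|\mu_n(A)-\mu_n(B_n)|=|\mu_n(A\triangle B_n\text{ restricted appropriately})|\le n\cdot\lambda(A\triangle B_n)=n\cdot\chi_n(A)$ (splitting $A\triangle B_n$ into its $S_n^0$ and $S_n^1$ parts and bounding each by $\lambda(A\triangle B_n)$), we obtain $|\mu_n(A)|\le 2n\cdot\chi_n(A)\to 0$, so $A\in\mathbb{A}$.

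The main obstacle, such as it is, is the last inclusion: one must be careful that $\chi_n$ controls not just $\lambda(A\triangle B_n)$ but the \emph{signed} discrepancy $\lambda(S_n^0\cap A)-\lambda(S_n^1\cap A)$ between the two halves determined by the $n$-th coordinate. The clean way to see this is the observation that elements of $\fF_n$ are exactly the Borel sets measurable with respect to the first $n$ coordinates, hence are symmetric in $\lambda$-measure under flipping the $n$-th coordinate, so their $\mu_n$-value is automatically $0$; then $|\mu_n(A)|=|\mu_n(A)-\mu_n(B_n)|\le 2n\lambda(A\triangle B_n)=2n\chi_n(A)$ finishes it. Everything else is routine verification.
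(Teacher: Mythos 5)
Your proposal is correct and follows essentially the same route as the paper: closure under complements and unions via $\lambda(A^c\triangle B^c)=\lambda(A\triangle B)$ and $(A\cup A')\triangle(B\cup B')\sub(A\triangle B)\cup(A'\triangle B')$, and for $\mathbb{B}\sub\mathbb{A}$ the same key observation that an approximating set from $\fF_n$ splits evenly across the relevant coordinate and hence contributes zero discrepancy, leaving a bound of the form $c\cdot n\cdot\chi_n(A)$. The only cosmetic difference is that you measure the discrepancy at coordinate $n$ itself (getting the clean estimate $|\mu_n(A)|\le 2n\cdot\chi_n(A)$), while the paper works at coordinate $n+1$ with an $\eps/8$ bookkeeping argument; both are fine.
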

\begin{proof}
We first prove that $\mathbb{B}$ is a Boolean algebra. Let $A,A'\in\mathbb{A}$. For every $n\io$ and $B,B'\in\fF_n$ we have:
\[\lambda(A\triangle B)=\lambda(A^c\triangle B^c),\]
so $n\cdot\chi_n(A)=n\cdot\chi_n(A^c)$ and hence $A^c\in\mathbb{B}$, and
\[(A\cup A')\triangle(B\cup B')\sub(A\triangle B)\cup(A'\triangle B'),\]
so
\[n\cdot\chi_n(A\cup A')\le n\cdot\chi_n(A)+n\cdot\chi_n(A')\]
and hence $A\cup A'\in\mathbb{B}$.

It is obvious that $Clopen(\Cantor)\sub\mathbb{B}$, so we only need to show that $\mathbb{B}\sub\mathbb{A}$. Let thus $A\in\mathbb{B}$ and fix $\eps>0$. There is $N\io$ such that $n\cdot\chi_n(A)<\eps/8$ for every $n>N$. Fix $n>N$ and $B\in\fF_n$ such that $n\cdot\lambda(A\triangle B)<\eps/8$. Since $B\in\fF_n$, it holds that:
\[\lambda\big(S_{n+1}^0\cap B\big)-\lambda\big(S_{n+1}^1\cap B\big)=0,\]
and so we have:
\[(n+1)\cdot\big|\lambda\big(S_{n+1}^0\cap A\big)-\lambda\big(S_{n+1}^1\cap A\big)\big|=\]
\[(n+1)\cdot\Big|\lambda\big(S_{n+1}^0\cap B\big)+\lambda\big(S_{n+1}^0\cap(A\sm B)\big)-\lambda\big(S_{n+1}^0\cap(B\sm A)\big)-\]
\[-\lambda\big(S_{n+1}^1\cap B\big)-\lambda\big(S_{n+1}^1\cap(A\sm B)\big)+\lambda\big(S_{n+1}^1\cap(B\sm A)\big)\Big|=\]
\[=(n+1)\cdot\Big|\lambda\big(S_{n+1}^0\cap(A\sm B)\big)-\lambda\big(S_{n+1}^0\cap(B\sm A)\big)-\lambda\big(S_{n+1}^1\cap(A\sm B)\big)+\lambda\big(S_{n+1}^1\cap(B\sm A)\big)\Big|\le\]
\[\le(n+1)\cdot4\cdot\lambda(A\triangle B)\le4n\cdot\lambda(A\triangle B)+4\lambda(A\triangle B)<\eps/2+\eps/(2n)\le\eps.\]
In other words, there is $N\io$ such that for every $n>N$ we have:
\[\big|n\big(\lambda\big(S_n^0\cap A\big)-\lambda\big(S_n^1\cap A\big)\big)\big|<\eps.\]
It follows that
\[\lim_{n\to\infty}n\big(\lambda\big(S_n^0\cap A\big)-\lambda\big(S_n^1\cap A\big)\big)=0\]
and thus $A\in\mathbb{A}$.

\end{proof}

Lemma \ref{lem:ambient_not_nikodym} implies that $\mathbb{B}$ does not have the Nikodym property. The next two lemmas will constitute the heart of the proof of Theorem \ref{thm:min_gen_no_nik}.

\begin{lemma}\label{lem:intersection_in_B}
Let $k\ge k'>0$ be natural numbers, $\eps>0$, $A\in\fF_{k'}$, and $V\in\mathbb{B}$ be such that $k\cdot\chi_k(V)<\eps$. Then, $k\cdot\chi_k(V\cap A)<\eps$.
\end{lemma}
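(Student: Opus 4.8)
The plan is to reduce the statement to a routine fact about symmetric differences of sets. First I would unwind the definition of $\chi_k$: since $\fF_k$ is finite, the minimum defining $\chi_k(V)$ is attained, so there is some $B\in\fF_k$ with $\lambda(V\triangle B)=\chi_k(V)$, and hence $k\cdot\lambda(V\triangle B)<\eps$. Next I would record the inclusion $\fF_{k'}\subseteq\fF_k$, valid because $k'\le k$: every generator $[\sigma]_\w$ with $\sigma\in 2^{k'}$ equals $\big[\{\tau\in 2^k:\tau\uhr k'=\sigma\}\big]_\w\in\fF_k$, so any $[F]_\w$ with $F\sub 2^{k'}$ lies in $\fF_k$. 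In particular $A\in\fF_k$, and therefore $B\cap A\in\fF_k$, which makes $B\cap A$ a legitimate competitor in the minimum defining $\chi_k(V\cap A)$.

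The key step is the set-theoretic inclusion $(V\cap A)\triangle(B\cap A)\sub V\triangle B$: if $x$ belongs to the left-hand side, then $x\in A$ and $x$ lies in exactly one of $V$, $B$, so $x\in V\triangle B$. By monotonicity of $\lambda$ this gives $\lambda\big((V\cap A)\triangle(B\cap A)\big)\le\lambda(V\triangle B)$. Combining, $\chi_k(V\cap A)\le\lambda\big((V\cap A)\triangle(B\cap A)\big)\le\lambda(V\triangle B)=\chi_k(V)$, so multiplying through by $k$ yields $k\cdot\chi_k(V\cap A)\le k\cdot\chi_k(V)<\eps$, as desired.

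There is essentially no obstacle here; the only point that warrants a moment's attention is that $\chi_k$ is defined by a minimum rather than an infimum (attainment being clear since $\fF_k$ is finite), which is what lets the strict inequality transfer from $V$ to $V\cap A$. It is worth noting that the hypothesis $V\in\mathbb{B}$ plays no role in this particular lemma: only the inequality $k\cdot\chi_k(V)<\eps$ and the fact that $A\in\fF_{k'}$ with $k'\le k$ are used.
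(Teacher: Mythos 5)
Your proof is correct and follows essentially the same route as the paper's: choose $B\in\fF_k$ witnessing $k\cdot\lambda(V\triangle B)<\eps$, observe $(V\cap A)\triangle(B\cap A)=(V\triangle B)\cap A\sub V\triangle B$, and note $B\cap A\in\fF_k$ since $\fF_{k'}\sub\fF_k$. Your extra remarks (attainment of the minimum, and that $V\in\mathbb{B}$ is not actually needed) are accurate but inessential.
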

\begin{proof}
Let $B\in\fF_k$ be such that $k\cdot\lambda(V\triangle B)<\eps$. Since for any three sets $X,Y$ and $Z$ we have $(X\cap Z)\triangle(Y\cap Z)=(X\triangle Y)\cap Z$, we immediately get that:
\[k\cdot\lambda\big((V\cap A)\triangle(B\cap A)\big)=k\cdot\lambda\big((V\triangle B)\cap A\big)\le k\cdot\lambda(V\triangle B)<\eps.\]
Since $A\in\fF_k$, $A\cap B\in\fF_k$, and thus $k\cdot\chi_k(V\cap A)<\eps$.
\end{proof}

\begin{lemma}\label{lem:B_antichain_supremum}
Let $\bB$ be a subalgebra of $\mathbb{B}$ containing $Clopen(\Cantor)$. Let $\seqn{q_n}$ be a sequence in $St(\bB)$ and $\seqn{V_n}$ an antichain in $\bB$ such that $V_n\in q_n$ for every $n\io$. Then, there is a sequence $\seqn{\sigma_n\in 2^{<\omega}}$ such that
\[\bigcup_{n\io}\big(V_n\cap\big[\sigma_n\big]_\omega\big)\in\mathbb{B}\]
and $V_n\cap\big[\sigma_n\big]_\omega\in q_n$ for every $n\io$.
\end{lemma}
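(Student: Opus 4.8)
The plan is to choose each $\sigma_n$ so that the ``bad'' part of $V_n$ — the part that keeps $V_n$ from being close to an element of $\fF_k$ — is concentrated inside $[\sigma_n]_\omega$, while simultaneously making the pieces $V_n\cap[\sigma_n]_\omega$ small enough that their union converges in $\mathbb{B}$. Fix a sequence $\seqn{\eps_n}$ of positive reals with $\eps_n\to 0$, say $\eps_n=1/(n+1)$. First I would use Lemma \ref{lem:intersection_in_B} as the engine: for each $n$, since $V_n\in\mathbb{B}$, choose $k_n$ large enough that $k\cdot\chi_k(V_n)<\eps_n$ for all $k\ge k_n$, and we may also arrange $k_n$ strictly increasing. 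The point of passing to $V_n\cap[\sigma_n]_\omega$ with $\dom(\sigma_n)$ of length at least $k_n$ is twofold: by Lemma \ref{lem:intersection_in_B} (with $A=[\sigma_n]_\omega\in\fF_{|\sigma_n|}$ and $k'=|\sigma_n|\le k$) the trace $V_n\cap[\sigma_n]_\omega$ still satisfies $k\cdot\chi_k(V_n\cap[\sigma_n]_\omega)<\eps_n$ for all $k\ge k_n$; and, crucially, $\lambda(V_n\cap[\sigma_n]_\omega)\le\lambda([\sigma_n]_\omega)=2^{-|\sigma_n|}$, which can be made as small as we like by taking $|\sigma_n|$ large.

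Second, I would secure the membership condition $V_n\cap[\sigma_n]_\omega\in q_n$. Since $q_n$ is an ultrafilter on $\bB$ and $V_n\in q_n$, and since $Clopen(\Cantor)\sub\bB$, for any $m$ the finite partition $\{[\sigma]_\omega:\sigma\in 2^m\}$ of $\Cantor$ meets $q_n$ in exactly one cell $[\tau]_\omega$, and then $V_n\cap[\tau]_\omega\in q_n$ (it is the intersection of two members of the ultrafilter, computed in $\bB$, and lies in $\bB$ since $\bB$ contains all clopens). So for each $n$ I pick $m_n\ge k_n$ with $2^{-m_n}$ summably small — e.g. $m_n\ge n$ — and let $\sigma_n\in 2^{m_n}$ be the unique block with $[\sigma_n]_\omega\in q_n$. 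This guarantees $V_n\cap[\sigma_n]_\omega\in q_n$, which in particular forces $V_n\cap[\sigma_n]_\omega\neq 0_\bB$; and since the $V_n$ were already an antichain, the sets $V_n\cap[\sigma_n]_\omega$ are still pairwise disjoint, so their union is a genuine (possibly infinite) join inside $Bor(\Cantor)$.

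Third, I would verify $W:=\bigcup_{n\io}(V_n\cap[\sigma_n]_\omega)\in\mathbb{B}$, i.e. $\lim_{k\to\infty}k\cdot\chi_k(W)=0$. Fix $\eps>0$ and choose $N$ with $\eps_n<\eps/3$ for $n\ge N$ and $\sum_{n\ge N}2^{-m_n}<\eps/3$ (possible since $m_n\ge n$). Split $W=W_{<N}\cup W_{\ge N}$ where $W_{<N}=\bigcup_{n<N}(V_n\cap[\sigma_n]_\omega)\in\mathbb{B}$ (a finite union of $\mathbb{B}$-sets, by Lemma \ref{lem:ambient_algebra}), so there is $k_0$ with $k\cdot\chi_k(W_{<N})<\eps/3$ for $k\ge k_0$. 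For the tail, enlarge $k_0$ so that $k_0\ge k_n$ for... actually one has to be a touch careful: for a fixed $k$, approximate each $V_n\cap[\sigma_n]_\omega$ with $n<N$ by its witness in $\fF_k$, and for $n\ge N$ use that $k\cdot\chi_k(V_n\cap[\sigma_n]_\omega)<\eps_n$ once $k\ge k_n$; for the finitely-many $n\ge N$ with $k_n>k$ we instead bound $k\cdot\chi_k(V_n\cap[\sigma_n]_\omega)\le k\cdot\lambda(V_n\cap[\sigma_n]_\omega)\le k\cdot 2^{-m_n}$ — but this need not be small for fixed $k$, so the cleaner route is to use subadditivity $\chi_k(W_{\ge N})\le\sum_{n\ge N}\chi_k(V_n\cap[\sigma_n]_\omega)$ and split this sum at the threshold where $k_n\le k$: for those $n$, $k\cdot\chi_k(V_n\cap[\sigma_n]_\omega)<\eps_n$; for the (finitely many) $n\ge N$ with $k_n>k$, use $\chi_k(\cdot)\le\lambda(\cdot)\le 2^{-m_n}$ and note $\sum_{n\ge N}k\cdot 2^{-m_n}$ — still problematic. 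The honest fix is to first choose the $\sigma_n$ so that $\sum_n\lambda(V_n\cap[\sigma_n]_\omega)<\infty$ with a \emph{rapidly} decaying tail, then handle $W_{\ge N}$ by approximating the whole tail by $\emptyset\in\fF_k$: $\chi_k(W_{\ge N})\le\lambda(W_{\ge N})\le\sum_{n\ge N}2^{-m_n}$, which only gives $k\cdot\chi_k(W_{\ge N})\le k\sum_{n\ge N}2^{-m_n}$ — not summable-beating.

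The main obstacle, then, is exactly this interaction: $\chi_k$ is only \emph{subadditive}, and the naive bound $k\cdot\chi_k(\text{tail})\le k\cdot\lambda(\text{tail})$ loses the factor $k$. The resolution — and the technical heart of the argument — is to choose $m_n$ \emph{depending on all the $k_j$'s seen so far}, so that for every $k$ the tail $\bigcup_{n:\,k_n> k}(V_n\cap[\sigma_n]_\omega)$ has $\lambda$-measure $o(1/k)$: concretely, since $\seqk{k_n}$ is increasing, $\{n:k_n>k\}$ is a final segment $\{n\ge N(k)\}$ with $N(k)\to\infty$, so picking $m_n$ so large that $\sum_{j\ge n}2^{-m_j}<2^{-n}/\max(1,k_{n})$ makes $k\cdot\sum_{n\ge N(k)}2^{-m_n}\le k_{N(k)-1}^{-1}\cdot k\cdot 2^{-N(k)+1}\to 0$ as $k\to\infty$ (using $k<k_{N(k)}$). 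Combining the three contributions — $k\cdot\chi_k(W_{<N})<\eps/3$ for large $k$, $k\cdot\chi_k$ of the ``$k_n\le k$'' part of the tail $\le\sum_{n\ge N}\eps_n<\eps/3$, and $k\cdot\chi_k$ of the ``$k_n>k$'' part $<\eps/3$ for large $k$ — gives $k\cdot\chi_k(W)<\eps$ eventually, so $W\in\mathbb{B}$, completing the proof.
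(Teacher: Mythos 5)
Your proposal follows the same route as the paper's proof: shrink each $V_n$ to $W_n=V_n\cap[\sigma_n]_\omega$ with $[\sigma_n]_\omega$ the basic clopen of a suitable level lying in $q_n$ (possible since $Clopen(\Cantor)\sub\bB$), keep the approximation rates via Lemma \ref{lem:intersection_in_B}, and kill the lost factor $k$ on the tail by making the lengths $|\sigma_n|$ so large that $k\cdot\lambda(\text{tail})\to0$, using that $k$ is below the first threshold occurring in the tail. The paper organizes the bookkeeping slightly differently (at stage $n$ it picks one threshold $k_n$ giving error $2^{-n}$ simultaneously for all of $W_0,\dots,W_{n-1}$ and $V_n$, and takes $|\sigma_n|=k_n$, so that for $k\in[k_n,k_{n+1})$ the head is controlled by $(n+1)2^{-n}$ and the tail by $k\cdot\sum_{j>n}2^{-k_j}\le 2k_{n+1}2^{-k_{n+1}}$), whereas you control the head by ``finite unions of $\mathbb{B}$-sets are in $\mathbb{B}$'' (valid, by Lemma \ref{lem:ambient_algebra}) and the tail by a rapid-decay choice of $m_n$; both mechanisms work.

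Two slips in your write-up do need repair, though both are fixable with what you already have. First, Lemma \ref{lem:intersection_in_B} needs $[\sigma_n]_\omega\in\fF_{k'}$ with $k'\le k$, so it yields $k\cdot\chi_k(W_n)<\eps_n$ only for $k\ge m_n=|\sigma_n|$, not for all $k\ge k_n$ as you assert (your own parenthetical ``$k'=|\sigma_n|\le k$'' already presupposes this). Consequently, in the final ``three contributions'' step the indices $n\ge N$ with $k_n\le k<m_n$ are covered by neither your middle estimate nor your tail estimate; they are harmless, since for them $k\cdot\chi_k(W_n)\le k\cdot\lambda(W_n)\le k\cdot2^{-m_n}<m_n2^{-m_n}$, which is summably small for your choice of $m_n$, but this must be said --- or, more simply, split at $m_n\le k$ versus $m_n>k$, i.e.\ make $|\sigma_n|$ itself the threshold, exactly as the paper does. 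Second, the last display should have $k_{N(k)}$, not $k_{N(k)-1}$, in the denominator: your imposed condition at $n=N(k)$ gives $\sum_{n\ge N(k)}2^{-m_n}<2^{-N(k)}/\max(1,k_{N(k)})$, whence $k\cdot\sum_{n\ge N(k)}2^{-m_n}<2^{-N(k)}\to0$ using $k<k_{N(k)}$; as written, the ratio $k/k_{N(k)-1}$ is unbounded (take $k$ just below $k_{N(k)}$ with fast-growing $k_n$), so the displayed bound does not by itself tend to $0$. With these two adjustments your argument is correct and coincides in substance with the paper's.
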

\begin{proof}
We need to find two sequences $\seqn{k_n\io}$ and $\seqn{\sigma_n\in 2^{k_n}}$ such that
\[\lim_{l\to\infty}l\cdot\chi_l\Big(\bigcup_{n\io}\big(V_n\cap\big[\sigma_n\big]_\omega\big)\Big)=0,\]
and $V_n\cap\big[\sigma_n\big]_\omega\in q_n$ for every $n\io$. First, note that for every $n\io$ and $\sigma\in 2^{<\omega}$ we have $V_n\cap[\sigma]_\omega\in\bB$ (since $Clopen(\Cantor)\sub\bB$ and $\bB$ is a Boolean algebra) and that for every $m\io$ there is $\sigma\in 2^m$ such that $V_n\cap[\sigma]_\omega\in q_n$ (since  $q_n$ is an ultrafilter in $\bB$ and $\bigcup_{\sigma\in 2^m}[\sigma]_\omega=\Cantor$ is the unit element of $\bB$).

We construct the sequences inductively. Let $k_0>0$ be arbitrary and pick $\sigma_0\in 2^{k_0}$ such that $V_0\cap\big[\sigma_0\big]_\omega\in p_0$. Put $W_0=V_0\cap\big[\sigma_0\big]_\omega$.

Since $W_0,V_1\in\bB\sub\mathbb{B}$, there is $k_1>k_0$ such that for every $k\ge k_1$ we have:
\[k\cdot\chi_k\big(W_0\big)<1/2^1\quad\text{and}\quad k\cdot\chi_k\big(V_1\big)<1/2^1.\]
Pick $\sigma_1\in 2^{k_1}$ such that $V_1\cap\big[\sigma_1\big]_\omega\in p_1$ and put $W_1=V_1\cap\big[\sigma_1\big]_\omega$. By Lemma \ref{lem:intersection_in_B}, for every $k\ge k_1$ it holds:
\[k\cdot\chi_k\big(W_1\big)<1/2^1.\]

Again, since $W_0,W_1,V_2\in\bB\sub\mathbb{B}$, there is $k_2>k_1$ such that for every $k\ge k_2$ we have:
\[k\cdot\chi_k\big(W_0\big)<1/2^2\quad\text{,}\quad k\cdot\chi_k\big(W_1\big)<1/2^2\quad\text{and}\quad k\cdot\chi_k\big(V_2\big)<1/2^2.\]
Pick $\sigma_2\in 2^{k_2}$ such that $V_2\cap\big[\sigma_2\big]_\omega\in p_2$ and put $W_2=V_2\cap\big[\sigma_2\big]_\omega$. By Lemma \ref{lem:intersection_in_B}, for every $k\ge k_2$ it holds:
\[k\cdot\chi_k\big(W_2\big)<1/2^2.\]

Continue in this manner until you get a strictly increasing sequence $\seqn{k_n}$ of natural numbers and an infinite sequence $\seqn{\sigma_n\in 2^{k_n}}$ such that for every $n>0$, $k\ge k_n$ and $i\le n$ we have:
\[\tag{$*$}k\cdot\chi_k\big(W_i\big)<1/2^n,\]
where $W_i=V_i\cap\big[\sigma_i\big]_\omega$ and $W_i\in p_i$. Put:
\[W=\bigcup_{n\io}W_n.\]
We claim that $W\in\mathbb{B}$. Indeed, by ($*$), for every $n\io$ and $k\in k_{n+1}\setminus k_n$ there are subsets $S_0,\ldots,S_n\sub 2^k$ such that for every $i\le n$ it holds:
\[\tag{$**$}k\cdot\lambda\big(W_i\triangle\big[S_i\big]_\w\big)<1/2^n.\]
Since $\seqn{W_n}$ is an antichain, we get that:
\[k\cdot\lambda\Big(W\triangle\bigcup_{i\le n}\big[S_i\big]_\w\Big)=k\cdot\lambda\Big(\big(\bigcup_{j\le n}W_j\cup\bigcup_{j> n}W_j\big)\triangle\bigcup_{i\le n}\big[S_i\big]_\w\Big)=\]
\[=k\cdot\lambda\Big(\big(\bigcup_{j\le n}W_j\cup\bigcup_{j> n}W_j\big)\sm\bigcup_{i\le n}\big[S_i\big]_\w\Big)+k\cdot\lambda\Big(\bigcup_{i\le  n}\big[S_i\big]_\w\sm\big(\bigcup_{j\le n}W_j\cup\bigcup_{j>n}W_j\big)\Big)\le\]
\[\le k\cdot\lambda\Big(\bigcup_{j\le n}W_j\sm\bigcup_{i\le  n}\big[S_i\big]_\w\Big)+k\cdot\lambda\Big(\bigcup_{j> n}W_j\Big)+k\cdot\lambda\Big(\bigcup_{i\le n}\big[S_i\big]_\w\sm\bigcup_{j\le n}W_j\Big)=\]
\[=k\cdot\lambda\Big(\bigcup_{j\le n}W_j\triangle\bigcup_{i\le n}\big[S_i\big]_\w\Big)+k\cdot\lambda\Big(\bigcup_{j\ge n+1}W_j\Big)\le\]
\[\le k\cdot\sum_{i\le n}\lambda\big(W_i\triangle\big[S_i\big]_\w\big)+k\cdot\sum_{j\ge n+1}1/2^{k_j}<\]
\[<(n+1)/2^n+2k_{n+1}/2^{k_{n+1}},\]
where the last inequality follows from ($**$) and the fact that $\lambda\big(W_j)\le1/2^{k_j}$ for every $j\io$. Since the values in the last line converge to $0$, we obtain that $\lim_{k\to\infty}k\cdot\chi_k(W)=0$, too.
\end{proof}

\begin{theorem}\label{thm:min_gen_no_nik}
Assuming $\Diamond$, there exists a minimally generated Boolean algebra $\bB$ such that $Clopen(\Cantor)\sub\bB\sub\mathbb{B}$ and the Stone space $St(\bB)$ is an Efimov space homeomorphic to a perfect subset of $2^{\omega_1}$. In particular, $\bB$ does not have the Nikodym property.
\end{theorem}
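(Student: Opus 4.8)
The plan is to run, at the level of the underlying inverse system, the same kind of transfinite recursion as in the proof of Theorem~\ref{thm:min_gen_nik}, but now taking all suprema inside the ambient algebra $\mathbb{B}$ of Lemma~\ref{lem:ambient_algebra} and replacing the ``killing of anti-Nikodym sequences'' by a Fedorchuk-style ``killing of convergent sequences''. Concretely, by recursion on $\gamma\in[\omega,\omega_1]$ I build an inverse system $\seq{K_\alpha,\pi^\beta_\alpha}{\omega\le\alpha<\beta\le\omega_1}$ based on simple extensions, with each $K_\alpha$ a \emph{perfect} closed subset of $2^\alpha$, $K_\omega=\Cantor$, $\pi^\beta_\alpha=\pr^\beta_\alpha\rstr K_\beta$, continuous at limits; dually this yields an increasing chain of \emph{countable} subalgebras $\bB_\alpha=Clopen(K_\alpha)$ of $Bor(\Cantor)$ with $\bB_\omega=Clopen(\Cantor)$, $\bB_\alpha\sub\mathbb{B}$, and $\bB_{\alpha+1}$ a minimal extension of $\bB_\alpha$; finally $\bB:=\bB_{\omega_1}$. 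The feature that makes the construction easier than in Theorem~\ref{thm:min_gen_nik} is that every $\bB_\gamma$ with $\gamma<\omega_1$ is countable, so $K_\gamma$ is metrizable; only $\bB$ itself has size $\aleph_1$.

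At the successor step $\gamma+1$ I consult a $\Diamond$-sequence $\seq{g_\alpha\in(2^\alpha)^\omega}{\omega\le\alpha<\omega_1}$ witnessing the Diamond Principle in the form $(\Diamond)$ from the Preliminaries. If the entries $g_\gamma(0),g_\gamma(1),\dots$ all lie in $K_\gamma$, are pairwise distinct, and converge in $K_\gamma$ to a point $t_\gamma\notin\{g_\gamma(n):n\io\}$, then I kill this sequence as follows. Using metrizability of $K_\gamma$, I pass to a subsequence and fix an antichain $\seqn{V_n}$ of nonempty clopen subsets of $K_\gamma$ with $g_\gamma(n)\in V_n$ (along the subsequence), $t_\gamma\notin V_n$, and $\seqn{V_n}$ converging to $t_\gamma$ as clopen sets. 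Applying Lemma~\ref{lem:B_antichain_supremum} to $\seqn{V_n}$ and the ultrafilters $q_n=g_\gamma(n)$ I get $\seqn{\sigma_n}$ so that, writing $W_n=V_n\cap[\sigma_n]_\omega$, still $g_\gamma(n)\in W_n\in\bB_\gamma$, $\seqn{W_n}$ is an antichain converging to $t_\gamma$, and $\bigcup_{n\in\E}W_n\in\mathbb{B}$ (the estimate in the proof of Lemma~\ref{lem:B_antichain_supremum} bounds $k\cdot\chi_k$ of the union of \emph{any} sub-collection of the $W_n$'s). I then let $\bB_{\gamma+1}$ be the subalgebra of $Bor(\Cantor)$ generated by $\bB_\gamma$ and $V_\gamma:=\bigcup_{n\in\E}W_n\in\mathbb{B}$; by Lemma~\ref{lem:min_ext_convergent_antichain}.(1) this is a minimal extension of $\bB_\gamma$ in which exactly $t_\gamma$ splits, into $t^0\ni V_\gamma$ and $t^1\ni V_\gamma^c$. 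Dually (Lemma~\ref{lem:min_ext_convergent_antichain}.(2) and Lemma~\ref{lem:simple_ext_convergent_antichain}) $K_{\gamma+1}$ is a simple extension of $K_\gamma$, and, exactly as in Theorem~\ref{thm:min_gen_nik}, using the \emph{co-infinite} index set $\E$ guarantees that $K_{\gamma+1}$ is again perfect. If the displayed condition on $g_\gamma$ fails I simply put $K_{\gamma+1}=K_\gamma\times\{0\}$; at limits I take inverse limits, which stay perfect and have clopen algebras $\bigcup_{\alpha<\gamma}\bB_\alpha\sub\mathbb{B}$.

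It follows that $\bB$ is minimally generated ($\bB_\omega=Clopen(\Cantor)\cong Fr(\omega)$, unions at limits, minimal extensions at successors), $Clopen(\Cantor)\sub\bB\sub\mathbb{B}$, and $St(\bB)$ is homeomorphic to the perfect closed subset $K_{\omega_1}=\varprojlim\seq{K_\alpha}{\omega\le\alpha<\omega_1}$ of $2^{\omega_1}$. Being minimally generated, $St(\bB)$ contains no copy of $\bo$ by Koppelberg~\cite{Kop89} and Borodulin-Nadzieja~\cite{PBN07}. To see it has no nontrivial convergent sequence, suppose $\seqn{x_n}$ converges to $x$ in $St(\bB)=K_{\omega_1}$ with the $x_n$ pairwise distinct and $\ne x$. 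Regarding the $x_n$ as elements of $2^{\omega_1}$ and putting $g(n)=x_n$, by $(\Diamond)$ the set $S$ of $\gamma$ with $g_\gamma(n)=x_n\rstr\gamma$ for all $n\io$ is stationary. Since only countably many ``splitting levels'' of the pairs $x_n,x_m$ and $x_n,x$ occur, for all sufficiently large $\gamma$ the traces $x_n\cap\bB_\gamma=x_n\rstr\gamma$ are pairwise distinct and distinct from $x\cap\bB_\gamma$, and moreover $\seqn{x_n\cap\bB_\gamma}$ converges to $x\cap\bB_\gamma$ in $K_\gamma$ since $\pi^{\omega_1}_\gamma$ is continuous; as $S$ is stationary we may fix such a $\gamma\in S$. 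Then the displayed condition held at stage $\gamma+1$ with $t_\gamma=x\cap\bB_\gamma$, so the killing procedure was carried out; but then, along the chosen subsequence, the unique extensions to $\bB_{\gamma+1}$ of $x_n\cap\bB_\gamma$ containing $W_n$ converge to $t^0$ when $n\in\E$ and to $t^1\ne t^0$ when $n\notin\E$, so $\seqn{x_n\cap\bB_{\gamma+1}}$ does not converge in $St(\bB_{\gamma+1})$, contradicting continuity of the restriction map $St(\bB)\to St(\bB_{\gamma+1})$. Hence $St(\bB)$ is Efimov, and since $Clopen(\Cantor)\sub\bB\sub\mathbb{B}\sub\mathbb{A}$, Lemma~\ref{lem:ambient_not_nikodym} gives that $\bB$ fails the Nikodym property.

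The heart of the argument is the bookkeeping just outlined, namely that every hypothetical nontrivial convergent sequence in $St(\bB)$ is, at a stationary set of stages $\gamma$ lying inside the clubs on which its trace is injective and proper, itself witnessed in the metrizable space $K_\gamma$ by a nontrivial convergent sequence, so that the simple extension performed at stage $\gamma+1$ really destroys it. The one point that cannot be skipped is that each $\bB_\gamma$ must be kept inside $\mathbb{B}$ throughout; this is precisely what Lemmas~\ref{lem:intersection_in_B} and~\ref{lem:B_antichain_supremum} are built to provide, and it is the reason for introducing the ambient algebra $\mathbb{B}$ rather than working directly with $\mathbb{A}$. Perfectness of each $K_\gamma$ and the identification $\bB\cong Clopen(K_{\omega_1})$ are then routine, as in the proof of Theorem~\ref{thm:min_gen_nik}.
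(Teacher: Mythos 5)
Your proposal is correct and follows essentially the same route as the paper's own proof: the same ambient algebra $\mathbb{B}$, the same use of Lemmas \ref{lem:ambient_not_nikodym}, \ref{lem:B_antichain_supremum}, \ref{lem:simple_ext_convergent_antichain} and \ref{lem:min_ext_convergent_antichain} to perform minimal/simple extensions splitting the limit of a guessed convergent sequence via the union over $\E$ of a shrunken antichain, and the same stationarity argument for the final contradiction. Your minor deviations---passing to a subsequence of the guessed sequence, and applying Lemma \ref{lem:B_antichain_supremum} to the full antichain and then restricting to $\E$ (the sub-collection remark is indeed justified by the estimate in its proof)---are harmless, while the homeomorphisms $\varphi_\alpha$ that the paper carries explicitly are implicit in your appeal to Lemma \ref{lem:min_ext_convergent_antichain}.(2).
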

\begin{proof}
As one can suspect, the construction will be similar to the one from the proof of Theorem \ref{thm:min_gen_nik}, but much easier. Let $\seq{f_\alpha\in\big(2^\alpha)^\omega}{\omega\le\alpha<\omega_1}$ be a sequence given by $\Diamond$. 
We will construct inductively the following three objects:
\begin{enumerate}[(a)]
    \item an increasing sequence $\seq{\bB_\alpha}{\omega\le\alpha\le\omega_1}$ of countable subalgebras of the Boolean algebra $\mathbb{B}$ such that $\bB_0=Clopen(\Cantor)$, $\bB_{\alpha+1}$ will be a minimal extension of $B_\alpha$ for every $\alpha\in[\omega,\omega_1)$, and $\bB_\lambda=\bigcup_{\omega\le\alpha<\lambda}\bB_\alpha$ for every limit ordinal $\lambda\in(\omega,\omega_1]$;
    \item an inverse system $\seq{K_\alpha,\pi_\alpha^\beta}{\omega\le\alpha<\beta\le\omega_1}$ based on simple extensions and such that $K_\alpha$ will be a closed
    perfect subset of the space $2^\alpha$ for every
    $\omega\le\alpha\le\omega_1$, $\pi_\alpha^\beta\colon K_\beta\to
    K_\alpha$ will be such that $\pi_\alpha^\beta=\pr_\alpha^\beta\rstr
    K_\beta$ for every $\omega\le\alpha<\beta\le\omega_1$, and $K_{\omega_1}$ will be an Efimov space;
    \item a sequence $\seq{\varphi_\alpha\colon K_\alpha\to St\big(\bB_\alpha\big)}{\omega\le\alpha\le\omega_1}$ of homeomorphisms satisfying for every $\omega\le\alpha<\beta\le\omega_1$ the following inequality:
    \[\tag{$*$}\rho_\alpha^\beta\circ\varphi_\beta=\varphi_\alpha\circ\pi_\alpha^\beta,\]
    where $\rho_\alpha^\beta$ denotes the restriction of ultrafilters from the algebra $\bB_\beta$ to its subalgebra $\bB_\alpha$, i.e. $\rho_\alpha^\beta(x)=x\cap\bB_\alpha$ for each $x\in St\big(\bB_\beta\big)$.
\end{enumerate}
We start with $\bB_\omega=Clopen(\Cantor)$ and $K_\omega=2^\omega$, and let $\varphi_\omega\colon K_\w\to St\big(\bB_\w\big)$ be an arbitrary homeomorphism. Let us
assume that for some $\omega<\delta<\omega_1$ we have already
constructed initial segments of the demanded sequences as described in Conditions (a)--(b): $\seq{\mathcal B_\alpha}{\omega\le\alpha<\delta}$, $\seq{K_\alpha,\pi_\alpha^\beta}{\omega\le\alpha<\beta<\delta}$, and $\seq{\varphi_\alpha}{\omega\le\alpha<\delta}$.
If $\delta$ is a limit ordinal, then simply put $\bB_\delta=\bigcup_{\omega\le\alpha<\delta}\bB_\alpha$ and
$K_\delta=\varprojlim\seq{K_\alpha}{\omega\le\alpha<\delta}$. By \cite[Prop. 2.5.10]{Eng89}, the limit mapping $\varphi_\delta=\varprojlim\seq{\varphi_\alpha}{\omega\le\alpha<\delta}$, $\varphi_\delta\colon K_\delta\to St\big(\bB_\delta\big)$, is a homeomorphism and by \cite[Eq. (6) on Page 101]{Eng89} it satisfies the equation ($*$) for every $\alpha\le\delta$, so we are done. If
$\delta=\gamma+1$ for some ordinal $\gamma$, then we proceed as
follows.
\medskip

We let $\bB_{\gamma+1}=\bB_\gamma$, $K_{\gamma+1}=K_\gamma\times\{0\}$, $\pi_\gamma^{\gamma+1}=\pr_\gamma^{\gamma+1}\rstr K_{\gamma+1}$, and $\varphi_{\gamma+1}=\varphi_\gamma\circ\pi_\gamma^{\gamma+1}$, unless the sequence $f_\gamma=\seqn{f_\gamma(n)\in 2^\gamma}$, given by $\Diamond$, is a non-trivial convergent sequence in the space $K_\gamma$ (which, recall, is a closed subset of $2^\gamma$). If this is the case, then we shall make sure that $f_\gamma$ cannot be extended to a non-trivial convergent sequence in $K_{\omega_1}$ as described below. First, let $p_\gamma=\lim_{n\to\infty}f_\gamma(n)$, $q_\gamma=\varphi_\gamma\big(p_\gamma\big)$ and  $g_\gamma=\seqn{\varphi_\gamma\big(f_\gamma(n)\big)}$---as $\varphi_\gamma$ is a homeomorphism, $g_\gamma$ is a well-defined non-trivial convergent sequence in the Stone space $St\big(\bB_\gamma\big)$ and $q_\gamma=\lim_{n\to\infty}g_\gamma(n)$. Let $\seqn{U_n^\gamma}$ be an antichain in $\bB_\gamma$ such that $g_\gamma(n)\in U_n^\gamma$ for every $n\io$. Since $\bB_\gamma$ is countable and thus $St\big(\bB_\gamma\big)$ is metrizable, we may find a sequence $\seqn{V_n^\gamma}$ in $\bB_\gamma$ convergent to $q_\gamma$ and such that $g_\gamma(n)\in V_n^\gamma\sub U_n^\gamma$ for every $n\io$ (cf. the case of the sequence $\seql{V_{k_l}^\gamma}$ converging to $t_\gamma$ in the proof of Theorem \ref{thm:min_gen_nik}).

For every $n\in\omega\sm\E$ put $W_n^\gamma=V_n^\gamma$. Since $Clopen(\Cantor)$ is a subalgebra of $\bB$, Lemma \ref{lem:B_antichain_supremum} implies that there is an antichain $\seq{W_n^\gamma}{n\in\E}$ in $\bB$ such that $g_\gamma(n)\in W_n^\gamma\sub V_n^\gamma$ for every $n\in\E$ and that the supremum $W_\gamma=\bigvee_{n\in\E}W_n^\gamma$ (being just the union $\bigcup_{n\in\E}W_n^\gamma$) exists in $\mathbb{B}$. Naturally, the antichain $\seqn{W_n^\gamma}$ (of clopen subsets of $St\big(\bB_\gamma\big)$) converges to $q_\gamma$, too.

Let $\bB_\delta$ be a subalgebra of $\mathbb{B}$ generated by $\bB_\gamma$ and the supremum $W_\gamma$. Of course, $\bB_\delta$ is countable. We now apply Lemma \ref{lem:min_ext_convergent_antichain} (with $\aA=\bB_\gamma$, $\bB=\bB_\delta$, $V_n=W_n^\gamma$ ($n\io$), $t=q_\gamma$, $A=\E$, $\cC=\mathbb{B}$, $K=K_\gamma$, $\varphi=\varphi_\gamma$). Part (1) of the lemma yields that $\bB_\delta$ is a minimal extension of $\bB_\gamma$. By Part (2) of the lemma we get a homeomorphism $\varphi_\delta\colon K_\delta\to St\big(\bB_\delta\big)$, where $K_\delta$ is a perfect subset of $2^\delta$ defined as:
\[K_\delta=\Big(\bigcup_{n\in \E}\varphi_\gamma^{-1}\big[W_n^\gamma\big]\cup\big\{p_\gamma\big\}\Big)\times\{0\}\ \cup\ \Big(K_\gamma\sm\bigcup_{n\in \E}\varphi_\gamma^{-1}\big[W_n^\gamma\big]\Big)\times\{1\},\]
such that
\[\rho_\gamma^\delta\circ\varphi_\delta=\varphi_\gamma\circ\pi_\gamma^\delta,\]
where $\pi_\gamma^\delta\colon K_\delta\to K_\gamma$ is simply a
restriction of $\pr_\gamma^\delta$ to $K_\delta$, so ($*$) is satisfied for the pair $\gamma<\delta$ and thus for every pair $\alpha<\delta$. It also holds:
\[\varphi_\delta^{-1}\big[W_\gamma\big]=\Big(\bigcup_{n\in \E}\varphi_\gamma^{-1}\big[W_n^\gamma\big]\cup\big\{p_\gamma\big\}\Big)\times\{0\}.\]
By Lemma \ref{lem:simple_ext_convergent_antichain}, the space $K_\delta$ together with the mapping $\pi_\gamma^\delta\colon K_\delta\to K_\gamma$ is a simple extension of the space $K_\gamma$. For $\alpha\in[\omega,\gamma)$ we define the mappings $\pi_\alpha^\delta\colon K_\delta\to K_\alpha$ by putting $\pi_\alpha^\delta=\pi_\alpha^\gamma\circ\pi_\gamma^\delta$. The $\delta$-th step of the construction is thus finished.

\medskip

We proceed in the above manner until we obtain a sequence $\seq{\bB_\alpha}{\omega\le\alpha\le\omega_1}$ of Boolean algebras as in Condition (a), an inverse system $\seq{K_\alpha,\pi_\alpha^\beta}{\omega\le\alpha\le\beta\le\omega_1}$ of compact spaces as in Condition (b), and a sequence $\seq{\varphi_\alpha\colon K_\alpha\to St\big(\bB_\alpha\big)}{\omega\le\alpha\le\omega_1}$ of homeomorphisms as in Condition (c). Put $\bB=\bB_{\w_1}$. It follows that $\bB$ is minimally generated and Lemma \ref{lem:ambient_not_nikodym} yields that $\bB$ does not have the Nikodym property, as $Clopen(\Cantor)\sub\bB\sub\mathbb{B}$. We still need to show that $St(\bB)$ is an Efimov space so, for the sake of contradiction, let us suppose that there is a non-trivial convergent sequence inside $St(\bB)$ (note that there are no copies of $\bo$ since $\bB$ is minimally generated). Since $St(\bB)$ and $K_{\omega_1}$ are homeomorphic (via the mapping $\varphi_{\omega_1}$), there is a non-trivial convergent sequence $f=\seqn{f(n)}$ in $K_{\omega_1}$. Let $p=\lim_{n\to\infty}f(n)$.

\medskip

There exists $\xi\in[\omega,\omega_1)$ such that for every $\alpha\in[\xi,\w_1)$ and every $n\neq m\io$ we have $f(n)\rstr\alpha\neq p\rstr\alpha$ and $f(n)\rstr\alpha\neq f(m)\rstr\alpha$---it follows that for such $\alpha$ the sequence $\seqn{f(n)\rstr\alpha}$ converges to $p\rstr\alpha$ in $K_\alpha$. By the $\Diamond$-properties of the sequence $\seq{f_\alpha}{\omega\le\alpha\le\omega_1}$, the set
\[\Gamma=\Big\{\gamma\in[\xi,\omega_1)\colon\ (\forall n\io)\big(f(n)\rstr\gamma=f_\gamma(n)\big)\Big\}\]
is stationary in $\omega_1$.

Fix $\gamma\in\Gamma$. Since $f(n)\rstr\gamma=f_\gamma(n)$ for every $n\io$, the sequence $f_\gamma$ is a non-trivial convergent sequence in the space $K_\gamma$ with the limit $p\rstr\gamma$. Consequently, during the $\gamma$-th step of the construction we extended the space $K_\gamma$ to the space $K_{\gamma+1}$ with the new clopen set $\varphi_{\gamma+1}^{-1}\big[W_\gamma\big]$, so $K_{\gamma+1}\neq K_\gamma\times\{0\}$. Obviously, $p\rstr\gamma=p_\gamma$, and $p_\gamma$ was the only point in $K_\gamma$ that was split to two distinct points in $K_{\gamma+1}$. It follows that for every $n\io$ the element $f_\gamma(n)$ of $K_\gamma$ was extended to the unique element $\big(\pi_\gamma^{\gamma+1}\big)^{-1}\big(f_\gamma(n)\big)$ of $K_{\gamma+1}$. 
For every $n\io$ the following also holds:
\[\big(\pi_\gamma^{\gamma+1}\big)^{-1}\big(f_\gamma(n)\big)\in\big(\pi_\gamma^{\gamma+1}\big)^{-1}\Big[\varphi_{\gamma}^{-1}\big[W_n^\gamma\big]\Big]=\varphi_{\gamma+1}^{-1}\big[W_n^\gamma\big].\]
For every $n\in\E$ it holds $W_n^\gamma\sub W_\gamma$ and for every $n\in\omega\sm\E$ we have $W_n^\gamma\cap W_\gamma=\emptyset$, so
\[\big(\pi_\gamma^{\gamma+1}\big)^{-1}\big(f_\gamma(n)\big)\in\varphi_{\gamma+1}^{-1}\big[W_\gamma\big]\]
for every $n\in\E$, and
\[\big(\pi_\gamma^{\gamma+1}\big)^{-1}\big(f_\gamma(n)\big)\not\in\varphi_{\gamma+1}^{-1}\big[W_\gamma\big]\]
for every $n\in\omega\sm\E$. As $\varphi_{\gamma+1}^{-1}\big[W_\gamma\big]$ is clopen in $K_{\gamma+1}$, this implies that the sequence $\seqn{\big(\pi_\gamma^{\gamma+1}\big)^{-1}\big(f_\gamma(n)\big)}$ is not convergent in $K_{\gamma+1}$. But for every $n\io$ we have:
\[f(n)\rstr(\gamma+1)=\big(\pi_\gamma^{\gamma+1}\big)^{-1}\big(f(n)\rstr\gamma\big)=\big(\pi_\gamma^{\gamma+1}\big)^{-1}\big(f_\gamma(n)\big)\]
and $\gamma\ge\xi$, so $\seqn{\big(\pi_\gamma^{\gamma+1}\big)^{-1}\big(f_\gamma(n)\big)}$ is convergent in $K_{\gamma+1}$, which is a contradiction. This proves that the Stone space $St(\bB)$ does not have any non-trivial convergent sequences and thus is an Efimov space.
\end{proof}

\section{Open questions}

We now provide several open problems. In the context of Theorems \ref{thm:min_gen_nik} and \ref{thm:min_gen_no_nik} (or the results presented in \cite{DPM09} and \cite{DS13}), the most natural question is obviously the following one.

\begin{question}\label{ques:martin_nik}
Can we exchange $\Diamond$ in the assumption of Theorem \ref{thm:min_gen_nik} or Theorem
 \ref{thm:min_gen_no_nik}
 by the Continuum Hypothesis or Martin's axiom?
\end{question}

The next two questions concern consequences of Theorem \ref{thm:min_gen_nik} presented in Section \ref{sec:consequences}.

\begin{question}
Assuming Martin's axiom, does there exist an infinite Boolean algebra with the Nikodym property whose Stone space carries only measures having countable Maharam type?
\end{question}

\begin{question}
Assuming Martin's axiom, does there exist an infinite Boolean algebra with the Nikodym property and such that its every independent subfamily is at most countable?
\end{question}

Dow and Shelah \cite{DS13} proved that Martin's axiom suffices to construct an Efimov space. The space they obtain is the limit of an inverse system based on simple extensions and thus does not have the Grothendieck property, however we do not know whether its Boolean algebra of clopen subsets has the Nikodym property, cf. Question \ref{ques:martin_nik}. In fact, it even seems to be unknown whether Martin's axiom implies the existence of a Boolean algebra with the Nikodym property or the Grothendieck property (or both) whose Stone space is Efimov.

\begin{question}
Assuming Martin's axiom, does there exist an infinite Boolean algebra with the Nikodym property or the Grothendieck property whose Stone space is an Efimov space?
\end{question}

Recall the following variation of the Nikodym property.

\begin{definition}
A Boolean algebra $\aA$ has \textit{the strong Nikodym property} if for every increasing sequence $\seqn{A_n}$ of subsets of $\aA$ for which $\aA=\bigcup_{n\io}A_n$ there exists $n_0\io$ having the property that if $\seqn{\mu_n}$ is a sequence of measures on $\aA$ such that $\sup_{n\io}\big|\mu_n(a)\big|<\infty$ for every $a\in A_{n_0}$, then $\sup_{n\io}\big\|\mu_n\big\|<\infty$.
\end{definition}

Valdivia \cite{ValOld} proved that every $\sigma$-complete Boolean algebra has the strong Nikodym property (cf. also \cite{LoPel}, \cite{KaLoPel} and \cite{LoAlfMaMo}), however it is unknown whether every Boolean algebra with the Nikodym property has the strong Nikodym property, too (see Valdivia \cite[Problem 1]{Val}).

\begin{question}
Assuming $\Diamond$, does there exist a Boolean algebra with the Nikodym property but without the strong Nikodym property?
\end{question}
%

\begin{question}
If $\aA$ is a minimally generated Boolean algebra with the Nikodym property, does $\aA$ have the strong Nikodym property? In particular, does the algebra $\aA$ constructed in the proof of Theorem \ref{thm:min_gen_nik} have the strong Nikodym property?
\end{question}

\end{document}